\newtheorem{theorem}{Theorem}
\newtheorem{prop}{Proposition}
\newtheorem{observation}{Observation}
\newtheorem{corollary}{Corollary}
\newtheorem{conjecture}{Conjecture}
\renewcommand{\d}{\delta}
\newcommand{\D}{\Delta}
\newcommand{\rbrac}[1]{\left(#1\right)} 
\newcommand{\cbrac}[1]{\left\{ #1\right\}} 
\newcommand{\abrac}[1]{\left| #1\right|} 
\newcommand{\ceilbrac}[1]{\left\lceil #1\right\rceil} 
\newcommand{\mc}[1]{\mathcal{#1}}
\newcommand{\eps}{\varepsilon}
\def\g{\gamma}
\title{Some progress on $t$-tone coloring}
\author{Patrick Bennett}
\address{Department of Mathematics, Western Michigan University, Kalamazoo, MI, USA}
\thanks{The first author was supported in part by Simons Foundation Grant \#848648.}
\email{\tt patrick.bennett@wmich.edu}
\author{Jade Nichols}
\address{Department of Mathematics, Western Michigan University, Kalamazoo, MI, USA}
\thanks{The second author was supported in part by funding from the Western Michigan University College of Arts and Sciences.}
\email{\tt nicholsjade26@gmail.com}
\begin{document}

\maketitle

\begin{abstract}
    A $t$-tone coloring of a graph $G$ assigns to each vertex a set of $t$ colors such that any pair of vertices $u, v$ with distance $d$ can share at most $d-1$ colors. In this note, we prove several new results on $t$-tone coloring. For example we prove a new result for trees of large maximum degree, as well as some results for the cartesian power of a graph. We also make a conjecture about trees.
\end{abstract}
\section{Introduction} 

Let $G$ be a simple graph, i.e.~$G$ has no loops or multiple edges (indeed, in this paper all graphs are simple). Let $t \ge 1$ be an integer and let $C$ be a set of colors. We say that a labeling of the vertices of $G$ with sets of $t$ colors $\ell :V(G) \rightarrow \binom{C}{t}$ is a {\em $t$-tone coloring} if $|f(u) \cap f(v)| < d(u, v)$ for all $u, v \in V(G)$. In other words, a pair of vertices at distance $d$ share at most $d-1$ colors. Note that this requirement is only nontrivial for $d=1, \ldots, t$. Two vertices in different components of $G$ have distance $\infty$ and are allowed to share any number of colors.  \\

We let $\tau_t(G)$, the {\em $t$-tone chromatic number of $G$}, be the smallest number of colors $|C|$ possible in a $t$-tone coloring of $G$. Note when $t=1$ this is equivalent to the standard chromatic number $\chi(G)$. Also, if $G$ is disconnected then $\tau_t(G)$ is just the maximum $t$-tone chromatic number of the components of $G$. For the rest of this note we will focus on connected graphs.\\

The $t$-tone chromatic number of a graph was first suggested by Gary Chartrand and initially explored by Fonger, Goss, Phillips and Segroves \cite{FGPS} under the supervision of Ping Zhang. Since their introduction they have had further investigation from several groups, see eg. \cite{BBDF,  TrTn, CKK13, CH, LMM}. Notably, in \cite{CKK13} came the first asymptotic results dealing with large graphs 
, which was furthered by \cite{BBDF} which focused purely on large graphs, namely the random graph.\\

In this note we will make some progress on the $t$-tone coloring problems with focus on families of large graphs. We have several new results, and the rest of the introduction is organized into subsections to discuss results of different types. 

\subsection{Trees }
First we investigate $t$-tone coloring for trees. Trees are of course bipartite, and so $\tau_1(T)=2$ for any nontrivial tree $T$. $\tau_t(T)$ for $t\ge 2$ becomes more interesting. Fonger, Goss, Phillips and Segroves \cite{FGPS} found an explicit formula for the $2$-tone chromatic number of a tree depending only on its maximum degree. 
\begin{theorem}[\cite{FGPS}]\label{thm:2-tonetree} If $T$ is a tree with $\Delta(T)=\Delta$ then 
    \[
    \tau_2(T) = \left \lceil \frac{\sqrt{8 \D +1}+5}{2} \right \rceil.
    \]
\end{theorem}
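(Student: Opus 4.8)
The plan is to prove matching upper and lower bounds. Write $k = \left\lceil \frac{\sqrt{8\Delta+1}+5}{2} \right\rceil$; I want to show $\tau_2(T) \le k$ for every tree of maximum degree $\Delta$ and that there is a tree of maximum degree $\Delta$ needing at least $k$ colors. The governing combinatorial quantity is the following: with $c$ colors available, a vertex $v$ gets a pair from $\binom{C}{2}$, and a neighbor $u$ (distance $1$) must get a disjoint pair, while a vertex $w$ at distance $2$ must differ from $v$'s pair in at least one color. The key local constraint is that the two colors on $v$ are ``used up'' for its neighbors, and moreover the neighbors of $v$ must receive pairwise-compatible labels that also avoid clashing at distance $2$ through $v$. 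So the real question is: given that $v$ has a fixed pair, how many colors do we need so that all $\deg(v)$ neighbors can be labeled? Each neighbor's pair must avoid both of $v$'s colors in... well, actually a neighbor may reuse one of $v$'s colors only if... no: distance is exactly $1$, so a neighbor's pair is disjoint from $v$'s pair. Two neighbors $u_1,u_2$ of $v$ are at distance $2$, so their pairs may share at most one color. Hence the neighbor-pairs form a family of $2$-subsets of the remaining $c-2$ colors that pairwise intersect in at most one element — which is automatic for distinct $2$-sets — so we just need $\binom{c-2}{2} \ge \Delta$, i.e. the neighbors get distinct pairs from a palette of size $c-2$. Solving $\binom{c-2}{2}\ge\Delta$ for the least such $c$ gives exactly $k$; this is the arithmetic behind the formula, and I would record it as a short lemma.

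For the upper bound $\tau_t(T)\le k$ (here $t=2$): root $T$ at a vertex $r$ and color greedily by BFS layers. When I reach a vertex $w$ with already-colored parent $p$ and grandparent $g$, I must choose a pair for $w$ that (i) is disjoint from $\ell(p)$, (ii) differs in $\ge 1$ color from $\ell(g)$ (automatic unless $\ell(w)=\ell(g)$, which I can avoid), (iii) differs in $\ge 1$ color from each already-colored sibling of $w$ (again automatic for distinct pairs), and (iv) — the one genuinely global-looking condition — differs appropriately from the children of $p$'s other neighbors, but in a tree all such vertices are at distance $\ge 3$ from $w$ except $g$ and $w$'s siblings, so there is nothing more to check. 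Thus I only need: among the $\binom{k-2}{2}$ pairs disjoint from $\ell(p)$, avoid the (at most $\deg(p)-1 \le \Delta-1$) pairs already used on siblings, and avoid $\ell(g)$ if it happens to be disjoint from $\ell(p)$. Since $\binom{k-2}{2}\ge\Delta$ by choice of $k$, a valid pair exists, and induction down the tree completes the argument.

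For the lower bound, I would exhibit the extremal tree: take the least $c$ with $\binom{c-2}{2} < \Delta$, i.e. $c=k-1$, and build a ``spider-of-spiders'' or two-level tree where a central vertex $v$ has $\Delta$ neighbors, each of which also has degree $\Delta$ (attach enough leaves). Suppose for contradiction $\tau_2(T)\le k-1$. Look at $\ell(v)$: its $\Delta$ neighbors need disjoint-from-$\ell(v)$ pairs, and pairwise they can share at most one color, forcing them to be distinct $2$-subsets of the remaining $k-3$ colors; but $\binom{k-3}{2}<\Delta$ — wait, I need to be careful that the relevant count is $\binom{(k-1)-2}{2}=\binom{k-3}{2}$, and the definition of $k$ gives $\binom{k-3}{2}<\Delta\le\binom{k-2}{2}$, so indeed there are too few pairs and we get a contradiction. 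Actually one must double-check whether a neighbor could ``cheat'' by using a color of $\ell(v)$: it cannot, since $d(u,v)=1$ forbids any shared color. So the pigeonhole is clean.

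The main obstacle, and the part requiring the most care, is the arithmetic identifying the least $c$ with $\binom{c-2}{2}\ge\Delta$ as exactly $\left\lceil(\sqrt{8\Delta+1}+5)/2\right\rceil$ — solving the quadratic $(c-2)(c-3)\ge 2\Delta$ and checking the ceiling lands in the right place for all $\Delta$, including small or boundary cases — together with making sure the extremal tree genuinely realizes this bound (that forcing $v$'s neighbors to have high degree does not accidentally let one recycle pairs in a way the distance-$2$ condition permits). Everything else is a routine greedy/BFS induction.
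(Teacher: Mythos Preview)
The paper does not actually prove Theorem~\ref{thm:2-tonetree}; it is quoted from \cite{FGPS}, and only the lower bound is recorded (in greater generality) as Observation~\ref{obs:DeltaLB}. So there is no in-paper proof to compare against.

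That said, your argument is essentially the standard one and is correct, with one framing issue worth fixing. You phrase the lower bound as ``exhibit an extremal tree,'' but the theorem asserts $\tau_2(T)=k$ for \emph{every} tree $T$ with $\Delta(T)=\Delta$, so the lower bound must be shown for all such $T$, not just one. Fortunately your actual argument only uses that $T$ contains a vertex $v$ of degree $\Delta$: its $\Delta$ neighbors must receive pairwise-distinct $2$-subsets of the $c-2$ colors not appearing on $v$, forcing $\binom{c-2}{2}\ge\Delta$. The ``spider of spiders'' is unnecessary; the star $S_\Delta$ already sits inside any such $T$ as an isometric subtree. This is exactly the $t=2$ case of Observation~\ref{obs:DeltaLB} in the paper. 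For the upper bound, your BFS greedy is fine, but tidy the count: a vertex $w$ with parent $p$ has at most $\deg(p)-2$ siblings (not $\deg(p)-1$), and together with the grandparent $g$---whose label is \emph{always} disjoint from $\ell(p)$ since $g$ and $p$ are adjacent---that makes at most $\deg(p)-1\le\Delta-1$ forbidden pairs among the $\binom{k-2}{2}\ge\Delta$ pairs disjoint from $\ell(p)$, so a valid choice exists.
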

Thus if $\D$ is large, $\tau_2(T)$ is approximately $\sqrt{2\D}$. The lower bound on $\tau_2(T)$ in Theorem \ref{thm:2-tonetree} follows from a more general observation.
\begin{observation}\label{obs:DeltaLB}
    Let $G$ be a graph with $\Delta(G) = \D$. Then for any $t \ge 2$ we have
    \[
    \tau_t(G) \ge \left\lceil\frac{2t+1+\sqrt{1+4t(t-1)\Delta}}{2}\right\rceil > \sqrt{t(t-1)\D}.
    \]
\end{observation}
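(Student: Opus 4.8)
The plan is to read the bound off from a single vertex of maximum degree and its neighborhood. Suppose $\ell$ is a $t$-tone coloring of $G$ with color set $C$, where $|C| = \tau_t(G)$, and let $v$ be a vertex with $\deg(v) = \D$, with neighbors $u_1, \dots, u_\D$. Because $d(v,u_i) = 1$, the coloring forces $\ell(v) \cap \ell(u_i) = \emptyset$, so each $\ell(u_i)$ is a $t$-element subset of $C \sm \ell(v)$, a set with $|C| - t$ elements. Also, any two of the neighbors satisfy $d(u_i, u_j) \le 2$ (through $v$), so $|\ell(u_i) \cap \ell(u_j)| \le 1$; note this still holds, indeed is strengthened to $0$, if $u_iu_j$ happens to be an edge.

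The heart of the argument is then an elementary double count. Consider pairs $(P, i)$ with $P$ a $2$-element subset of $C \sm \ell(v)$ and $P \subseteq \ell(u_i)$. Summing over $i$, each $\ell(u_i)$ contributes $\binom{t}{2}$ such pairs, for a total of $\D \binom{t}{2}$. On the other hand, a fixed $P$ lies in $\ell(u_i)$ for at most one index $i$, since two distinct neighbors both containing $P$ would share two colors. Hence $\D \binom{t}{2} \le \binom{|C| - t}{2}$, i.e.\ $t(t-1)\D \le (|C| - t)(|C| - t - 1)$.

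It remains to solve for $|C|$. Writing $m = |C| - t$, the inequality $m^2 - m - t(t-1)\D \ge 0$ gives $m \ge \tfrac{1}{2}\rbrac{1 + \sqrt{1 + 4t(t-1)\D}}$, hence $|C| \ge \tfrac{1}{2}\rbrac{2t + 1 + \sqrt{1 + 4t(t-1)\D}}$, and since $|C|$ is an integer we may round up to obtain the stated bound. For the final strict inequality, note $\sqrt{1 + 4t(t-1)\D} > \sqrt{4t(t-1)\D} = 2\sqrt{t(t-1)\D}$ and $2t+1 > 0$, so the right-hand side already exceeds $\sqrt{t(t-1)\D}$ before the ceiling is applied. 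I do not anticipate a genuine obstacle here: the argument is a routine extremal double count, and the only mild points to verify are the distance-$\le 2$ claim for pairs of neighbors (including the adjacent case) and that degenerate small values of $|C|$ cause no trouble, which they do not since the quadratic inequality subsumes them.
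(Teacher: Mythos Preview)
Your proof is correct and follows essentially the same approach as the paper: pick a vertex of maximum degree, note that its neighbors draw their colors from $C\setminus \ell(v)$ and pairwise share at most one color, and conclude $\Delta\binom{t}{2}\le\binom{|C|-t}{2}$ before solving the resulting quadratic. Your write-up simply supplies more detail (the explicit double count and the solution of the quadratic) than the paper's terse version.
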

\begin{proof}
    Let $v$ be a vertex of degree $\D$ in $G$, and suppose we have a $t$-tone coloring of $G$ using color set $C$. Then $v$ gets $t$ colors, and the neighbors of $v$ cannot use those colors at all. Furthermore no two neighbors of $v$ can share two colors, so there are $\Delta \binom t2$ distinct pairs of colors used by the neighbors of $v$. Thus we have $\Delta \binom t2 \le \binom{|C|-t}{2}$. Solving for $|C|$ yields the first inequality, and the second is elementary.
\end{proof}

The proof above only looks at pairs of vertices whose distance is 1 or 2, even though a $t$-tone coloring has requirements for pairs at distance up to $t$. So it would seem likely that Observation \ref{obs:DeltaLB} is not very good for $t \ge 3$. Surprisingly, Cranston, Kim and Kinnersley \cite{CKK13} proved that to the contrary we have $\tau_t(T)= O(\sqrt{\D})$ for all $t$.
\begin{theorem}[ \cite{CKK13}]\label{thm:CKKtrees}
    For each $t \ge 2$ there is a constant $c=c(t)$ such that for all trees $T$ with $\Delta(T)=\Delta$ we have
    \[
    \tau_t(T) \le c\sqrt{\D}.
    \]
\end{theorem}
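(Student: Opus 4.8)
The plan is to prove this by a greedy colouring argument along a carefully chosen vertex order, after first disposing of the case of bounded maximum degree.

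First I would reduce to the case that $\D$ is large, say $\D \ge \D_0(t)$ for a threshold to be fixed later. For $\D \le \D_0$ one has the crude bound $\tau_t(T) \le t\cdot\chi(T^t)$: properly colour the $t$-th power $T^t$ with colour classes $V_1,\dots,V_k$, and give every vertex of $V_i$ the $t$-set $\{(i,1),\dots,(i,t)\}$ out of the palette $[k]\times[t]$. Vertices in the same class are at distance $>t$ in $T$ and may share arbitrarily, while vertices in distinct classes get disjoint sets; since $\chi(T^t)$ is bounded in terms of $t$ and $\D_0$, so is $\tau_t(T)$ for $\D\le\D_0$. Hence it suffices to produce $c_0=c_0(t)$ that works for all $\D\ge\D_0(t)$ and then take $c(t)=\max\{c_0(t),\ t\cdot\chi_0\}$ where $\chi_0$ is the relevant bound.

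So fix $\D\ge\D_0(t)$, set $N:=\lceil c_0\sqrt{\D}\,\rceil$, $C:=[N]$, root $T$ at an arbitrary vertex, and process the vertices in order of non-decreasing depth (breadth-first), assigning to each vertex a $t$-subset of $C$ that respects all previously coloured vertices within distance $t$; at the end every pair at distance $\le t$ is handled (the later-coloured vertex respected the earlier one), so we get a valid $t$-tone colouring. The point is to bound the number of $t$-subsets of $C$ that are \emph{forbidden} for a vertex $v$ at depth $h$. Each already-coloured $u\ne v$ with $d(u,v)\le t$ is either (a) an ancestor of $v$ at distance $i$, $1\le i\le t$ (at most $t$ of these), or (b) a cousin: writing $w$ for the lowest common ancestor of $u$ and $v$ and $a:=d(v,w)\ge1$, $b:=d(u,w)\ge1$, we have $a+b=d(u,v)\le t$ and, crucially, $b\le a$, because $u$ was coloured no later than $v$ so $\mathrm{depth}(u)=h-a+b\le h$. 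For a fixed type $(a,b)$ the number of such cousins is at most $\D^{b}$ (descend from $w$ through a child other than the one leading to $v$, then $b-1$ further steps). An ancestor at distance $i$ forbids the $t$-sets $S$ with $|S\cap f(u)|\ge i$, at most $\binom ti N^{t-i}$ of them (and the parent, $i=1$, forbids only a $\le 2t^2/N$ fraction of all $\binom Nt$); a cousin of type $(a,b)$ forbids at most $\binom{t}{a+b}N^{t-a-b}$ sets. Summing and dividing by $\binom Nt\sim N^t/t!$, the fraction of forbidden sets is at most, roughly,
\[
\frac{2t^2}{N}\;+\;t!\sum_{i=2}^{t}\binom ti N^{-i}\;+\;t!\sum_{\substack{1\le b\le a\\ a+b\le t}}\binom{t}{a+b}\,\D^{b}\,N^{-(a+b)} .
\]
In the last sum $\D^{b}N^{-(a+b)}=c_0^{-(a+b)}\D^{(b-a)/2}\le c_0^{-(a+b)}$ because $b\le a$, with equality only when $a=b$; the terms with $a>b$ carry an extra factor $\D^{-1/2}$ and, together with the first two sums, tend to $0$ as $\D\to\infty$. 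The $a=b$ terms contribute at most $t!\sum_{k\ge1}\binom{t}{2k}c_0^{-2k}$, dominated by $t!\binom t2 c_0^{-2}$. So, choosing $c_0=c_0(t)$ large enough that $t!\sum_{k\ge1}\binom{t}{2k}c_0^{-2k}<\tfrac12$ and then $\D_0(t)$ large enough that the vanishing contributions sum to less than $\tfrac12$, strictly fewer than $\binom Nt$ of the $t$-subsets are forbidden, so $v$ can always be coloured and the greedy procedure succeeds with $N=O(\sqrt\D)$ colours.

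The main obstacle is the cousin count: a priori $v$ has up to $\Theta(\D^{t})$ vertices within distance $t$, so a naive union bound over all of them with $O(\sqrt\D)$ colours is hopeless. What rescues the argument is that breadth-first order forces $b\le a$ for every \emph{already-coloured} cousin of type $(a,b)$, which makes $\D^{b}N^{-(a+b)}$ bounded (at most $c_0^{-(a+b)}$) instead of growing with $\D$, while the weak distance-$(a+b)$ requirement only forbids $t$-sets with a large overlap $\ge a+b\ge 2$, costing the saving factor $N^{-(a+b)}$. Verifying that these two effects balance precisely, pinning down $c_0(t)$ (and checking that ties in depth fall into the harmless $a=b$ case), is the technical heart of the proof.
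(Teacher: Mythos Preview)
Your greedy breadth-first argument is correct and is, in fact, essentially the original proof given in \cite{CKK13}. Note, however, that the present paper does not supply its own proof of Theorem~\ref{thm:CKKtrees}; it is quoted as background. What the paper actually proves is the sharper Theorem~\ref{thm:trees}, which pins down the leading constant as $(1+o(1))\sqrt{t(t-1)}$ rather than an unspecified $c(t)$.

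It is worth contrasting the two methods. In your argument the binding constraint comes from already-coloured cousins of type $(a,b)=(1,1)$ (siblings), of which there may be up to $\Delta-1$; each forbids roughly a $\binom{t}{2}\,t(t-1)/N^{2}$ fraction of the $t$-sets, so the union bound forces $c_0$ to be of order $t(t-1)$ rather than $\sqrt{t(t-1)}$. The paper instead colours all of a level $P_i$ \emph{simultaneously}, encoding the extension problem as an $X$-perfect matching in an auxiliary bipartite hypergraph and invoking the Delcourt--Postle theorem (Theorem~\ref{thm:DP}). Because the matching coordinates the choices across $P_i$, sibling constraints are shared rather than accumulated vertex by vertex, and the degree computation on the $Y$ side balances exactly at $d$; this is precisely what recovers the optimal constant. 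Your approach is more elementary and self-contained, while the paper's is heavier but quantitatively tight.
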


In this note we asymptotically determine $\tau_t(T)$ for fixed $t$ and large $\D(T)$. In particular, we show that Observation \ref{obs:DeltaLB} is asymptotically optimal for large $\D$.
\begin{theorem}\label{thm:trees}
    For any integer $t \ge 2$ and real number $\delta >0$ there exists a $\D_0=\D_0(t, \d)$ such that for all trees $T$ with maximum degree $\D(T) = \D \ge \D_0$ we have 
    \begin{equation}\label{eqn:trees}
      \tau_t(T)\le (1  + \delta) \sqrt{t(t-1)\D}.
    \end{equation}
\end{theorem}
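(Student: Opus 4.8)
Put $n:=\lceil(1+\d)\sqrt{t(t-1)\D}\,\rceil$ and take $C=[n]$ as color set; the goal is to exhibit a $t$-tone coloring $\ell\colon V(T)\to\binom Ct$. Everything is dictated by the local obstruction behind Observation~\ref{obs:DeltaLB}: around a vertex $p$, its $\le\D$ neighbors each get a $t$-subset of $C\setminus\ell(p)$, and any two of them, being at distance $2$, share $\le1$ color, so these $t$-subsets form a partial Steiner system — a linear $t$-uniform hypergraph — on $n-t$ points, and hence number at most $\binom{n-t}2/\binom t2$. For our choice of $n$ this equals $(1+\d)^2(1-o(1))\D$, and by the R\"odl nibble (the known results on near-optimal partial Steiner systems) a partial Steiner system on $n-t$ points with \emph{more than} $\D$ blocks exists once $\D$ is large, with room to spare — this surplus is exactly the slack measured by $\d$.

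\smallskip

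\noindent The plan is to root $T$ at an arbitrary vertex and color its vertices in breadth-first order, the levels ordered arbitrarily but fixed in advance. The key device: as soon as a vertex $p$ is colored we also commit a partial Steiner system $\mc P_p\subseteq\binom{C\setminus\ell(p)}{t}$ with $|\mc P_p|>\D$, which will be the \emph{palette} from which every child of $p$ draws its color-set. Assigning $\ell(c)\in\mc P_p$ to each child $c$ of $p$ automatically enforces the distance-$1$ constraint at $pc$ and, as distinct blocks of a linear hypergraph meet in at most one point, all distance-$2$ constraints among the children of $p$. When we reach $v$ with parent $p$, we just pick $\ell(v)\in\mc P_p$ avoiding the $\le\D-1$ blocks already used by older siblings, and avoiding, for each already-colored $w$ with $2\le d(v,w)=d\le t$, the blocks $Q\in\mc P_p$ with $|Q\cap\ell(w)|\ge d$; by linearity there are $\le\binom td$ of the latter per such $w$.

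\smallskip

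\noindent The main obstacle is to keep the total number of forbidden blocks below $|\mc P_p|$. In a BFS-ordered tree the number of already-colored vertices at distance $d$ from $v$ is $O(\D^{\lfloor d/2\rfloor})$, dominated by the ``cousins'' whose lowest common ancestor with $v$ lies exactly halfway; so for $d\ge3$ this is $\Omega(\D)$, and for even $d$ it is $\gg\D$, and a crude count of forbidden blocks exceeds $|\mc P_p|$. The way out is to choose the palettes not independently but \emph{compatibly}, so that for every already-committed palette $\mc P_{p'}$ of a vertex $p'$ near $p$, no block of $\mc P_p$ meets a block of $\mc P_{p'}$ in more colors than the geometry of the pair $p,p'$ in $T$ permits (a threshold governed by $d(p,p')$). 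An already-colored $w$ at distance $d\ge3$ has its color-set in the palette of its parent, a vertex at tree-distance $d-2$ from $p$, so under this compatibility it contributes no (or negligibly few) forbidden blocks; then only the grandparent (at distance $2$) and a handful of lower-order terms survive, the total drops to $(1+o(1))\D<|\mc P_p|$, and $v$ can always be colored. The substantive work is to show such compatible palettes exist: when $\mc P_p$ is to be chosen, the compatibility requirements forbid only a controlled family of candidate $t$-subsets of $C\setminus\ell(p)$ — one uses that the nearby palettes are themselves sparse, spread-out partial Steiner systems, and that a near-optimal partial Steiner system \emph{avoiding a prescribed, not-too-large and not-too-concentrated family of blocks} still has more than $\D$ blocks, via a robust form of the nibble — and this in turn requires the palettes to be defined in a coordinated, recursive fashion down the tree (for instance sibling palettes being copies of a single reference system under relabeling, so that their union stays sparse), which, together with the bookkeeping of which palettes are committed when, is where the bulk of the argument lies. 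When $t=2$ there are no distances beyond $2$, none of this is needed, and the construction collapses to the exact count of Theorem~\ref{thm:2-tonetree}.
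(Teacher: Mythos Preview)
Your plan shares its skeleton with the paper's proof --- root the tree, color level by level in BFS --- but diverges at the crucial step. You propose to color one vertex at a time greedily from a pre-committed palette $\mc P_p$ at each parent $p$, with the palettes constructed to be ``compatible'' so that the distance-$d$ constraints for $d\ge 3$ take care of themselves. You correctly identify that a naive greedy scheme fails because there are $\Theta(\D^{\lfloor d/2\rfloor})$ already-colored vertices at distance $d$, and that the fix must come from coordinating the palettes. But then you stop: the construction of compatible palettes --- which you yourself flag as ``the substantive work'' and ``where the bulk of the argument lies'' --- is never given. The gesture toward ``a robust form of the nibble'' avoiding ``a prescribed, not-too-large and not-too-concentrated family of blocks'' is not a proof; one must actually quantify that forbidden family and verify that a near-optimal partial Steiner system avoiding it still has more than $\D$ blocks, and this has to hold recursively and uniformly down the tree. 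Worse, the compatibility condition you need (blocks of $\mc P_p$ and $\mc P_{p'}$ meeting in at most $d(p,p')+1$ colors, for all nearby $p'$) is essentially the $t$-tone condition on the blocks themselves, so there is a real risk of circularity unless you impose genuine structure. You allude to ``sibling palettes being copies of a single reference system under relabeling'', but this is not developed, and it is not clear it can be made to work across several levels of ancestry simultaneously.

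The paper sidesteps the two-stage palette-then-greedy scheme entirely. It colors an entire level $P_i$ \emph{at once} by encoding the task as a bipartite hypergraph matching: $X=P_i$, and $Y=\bigcup_{j=1}^{\lfloor t/2\rfloor} P_{i-j}\times\binom{C}{2j}$; the edge $e_{v,S}$ (``assign $S$ to $v$'') contains $v$ together with every pair $(u,S')$ where $u$ lies on the root-path of $v$ and $S'\in\binom{S}{2d(u,v)}$. Two vertices $u,v\in P_i$ at distance $2j$ have a common ancestor $w\in P_{i-j}$, and if $|S_u\cap S_v|\ge 2j$ then $e_{u,S_u}$ and $e_{v,S_v}$ share the $Y$-vertex $(w,S')$ for any $2j$-subset $S'$ of $S_u\cap S_v$ --- so an $X$-perfect matching automatically enforces every even-distance constraint within the level. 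Constraints to earlier levels are handled by simply omitting the edges $e_{v,S}$ for which $S$ conflicts with an already-colored vertex, a negligible set of size $O(\D^{(t-1)/2})=o(d)$. The matching then exists by a single application of the Delcourt--Postle theorem (Theorem~\ref{thm:DP}) after routine degree and codegree estimates. In effect the paper replaces your entire compatible-palette apparatus with one invocation of an off-the-shelf nibble result: the matching \emph{is} the compatible assignment, produced in one shot rather than committed piecemeal in advance.
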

\noindent We conjecture that $\tau_t(T)$ can be determined precisely by $\Delta(T)$ when it is large. For $k \ge 2$ we let $S_k$ be the star with $k$ leaves, and we let $T_k$ be the infinite $k$-regular tree. Note that if $T$ is a tree with $\D = \D(T)$ then we have $S_\D \subseteq T \subseteq T_\D$ and so $\tau_t(S_\D) \le \tau_t(T) \le \tau_t(T_\D)$ for any $t$. We conjecture that equality holds for $\D$ large with respect to $t$. 
\begin{conjecture}\label{conj:tree}
    For every $t$ there exist a $\Delta_0=\D_0(t)$ such that $\tau_t(S_\D) = \tau_t(T_\D)$ for all $\D \ge \D_0$.
\end{conjecture}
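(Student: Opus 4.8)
The plan is to prove the nontrivial inequality $\tau_t(T_\D)\le\tau_t(S_\D)$, the reverse being immediate from $S_\D\subseteq T_\D$. By a De~Bruijn--Erd\H{o}s type compactness principle --- a locally finite graph is $t$-tone $n$-colorable once all of its finite subgraphs are, since the shortest paths among any finite vertex set lie in a finite subgraph --- combined with the finiteness of $\tau_t(T_\D)$ guaranteed by Theorem~\ref{thm:CKKtrees}, the value $\tau_t(T_\D)$ is attained by some finite tree. Hence it suffices to show: there is a $\D_0(t)$ such that every \emph{finite} tree $T$ with $\D(T)=\D\ge\D_0(t)$ admits a $t$-tone coloring using only $N:=\tau_t(S_\D)$ colors.

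Next I would identify $N$ precisely. Examining the star gives $N=t+m(\D,t)$, where $m(\D,t)$ is the least $n$ admitting $\D$ many $t$-subsets of $[n]$ that pairwise intersect in at most one point --- equivalently, the least $n$ with packing number $D(n,t)\ge\D$. The pair-counting inequality behind Observation~\ref{obs:DeltaLB}, together with known results on packing numbers and near-perfect partial Steiner systems, yields $m(\D,t)=\sqrt{t(t-1)\D}\,(1+o(1))$, so what is wanted is exactly Theorem~\ref{thm:trees} with $\delta$ replaced by the exact rounding of $\sqrt{t(t-1)\D}$.

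For the coloring I would root $T$ and proceed greedily in breadth-first order. Assign the root any $t$-set; then, with $v$ already colored by $A_v$ and its parent $p$ by $A_p$ (disjoint from $A_v$), color the at most $\D-1$ children of $v$ by picking $t$-sets inside the $(N-t)$-set $[N]\setminus A_v$ that (i) pairwise meet in $\le 1$ point (the distance-$2$ condition among siblings), (ii) each meet $A_p\subseteq[N]\setminus A_v$ in $\le 1$ point (distance $2$ to $p$), and (iii) meet the color sets of the nearer ancestors and of the nearby cousins in the amounts forced at distances $3,\dots,t$; the conditions in (iii) only forbid two $t$-sets from agreeing in $\ge 2$ points and are far weaker than (i)--(ii). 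So the heart of the matter is: inside an $(N-t)$-set, with one distinguished $t$-subset $B$, find $\ge\D-1$ further $t$-sets, pairwise meeting in $\le 1$ point, each meeting $B$ in $\le 1$ point. Since $N-t=m(\D,t)$ we have $D(N-t,t)\ge\D$, so the counting \emph{permits} this; but a short calculation shows the natural upper bound on how many such sets can exist is essentially $D(N-t,t)$ itself, with \emph{no room to spare} --- the pairs lost by having blocks avoid $B$ are exactly recovered by the $t$ ``sunflowers'' one may grow through the points of $B$ --- so this local step must be executed essentially optimally.

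That tightness is where the real difficulty lies. Nibble-type or random-greedy arguments deliver partial Steiner systems of size only $(1-o(1))D(n,t)$, which falls short of the exact target $\D-1$; what is needed is a near-perfect packing in which a prescribed $t$-set is met by every block in $\le 1$ point, and, iterating down the tree, in which the softer distance-$3,\dots,t$ conditions --- themselves nontrivial when $t$ is large, since then ``$\le 2$'' is a genuine restriction on $t$-sets --- are also respected. I expect the viable route is to bypass greedy extension altogether and build the color sets algebraically, e.g.\ from pencils of lines or flags in an affine/projective plane or a generalized polygon of the appropriate order, producing a self-similar coloring of $T_\D$ that meets all distance-$\le t$ constraints at once. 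Finally, the rounding of $D(n,t)$ and the divisibility obstructions inherent to such near-perfect designs are presumably the genuine source of the hypothesis ``$\D$ large'', and verifying that no sporadic exceptions persist for all large $\D$ is itself part of the problem --- which is why we record this only as a conjecture.
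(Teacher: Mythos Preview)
The statement is a \emph{conjecture}, and the paper does not prove it. What the paper offers is evidence in small cases (Section~\ref{sec:treeconj}): explicit constructions showing $\tau_3(S_\D)=\tau_3(T_\D)$ for $\D\le 7$ and $\tau_4(S_\D)=\tau_4(T_\D)$ for $\D\in\{3,4\}$, together with counterexamples showing the hypothesis ``$\D$ large'' is necessary. Your write-up, likewise, is not a proof and you say so in the last sentence; it is an outline of where the difficulty lies.

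Your analysis of that difficulty is accurate. The compactness reduction and the identification $N=\tau_t(S_\D)=t+m(\D,t)$ are correct, and you have put your finger on the exact obstruction: extending a coloring down one level requires a packing of $t$-sets in an $(N-t)$-set that is essentially \emph{perfect}, not merely of size $(1-o(1))D(N-t,t)$. This is precisely why the paper's own Theorem~\ref{thm:trees} (proved via the Delcourt--Postle hypergraph matching theorem) only achieves $(1+\delta)\sqrt{t(t-1)\D}$ colors rather than the conjectured exact value: the nibble/matching machinery buys asymptotic optimality but not the last $o(\sqrt{\D})$ colors. Your suggestion of algebraic constructions (lines in planes, etc.) is in the right spirit---indeed the paper's small-case proofs use the Fano plane for $t=3$, $\D=7$---but turning that into a general argument for all large $\D$ is the open problem. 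In short, you have correctly diagnosed why the conjecture is a conjecture, which is all that can be asked here.
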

\noindent The conjecture, if true, would imply that $\tau_t(T)$ is completely determined by $\D(T)$ when $\D(T)$ is large with respect to $t$. \\

\subsection{Sparse random graphs}
We let $G_{n,p}$ be the classical Erd\H{o}s-R\'enyi-Gilbert random graph, i.e.~the graph on $n$ vertices where each edge is present with probability $p$ (independently from all other edges). If the probability of an event $\mathcal{E}_n$ tends to $1$ as $n \rightarrow \infty$, we say the event occurs \textit{with high probability} (w.h.p.). In \cite{BBDF} it was shown by Bal, the first author, Dudek, and Frieze that

\begin{theorem}[\cite{BBDF}]\label{thm:oRG}
For any fixed integer $t\ge 2$ and real $c>0$  there exist constants $c_1$ and $c_2$ such that for $p=\frac cn$ we have with that w.h.p.
\[
c_1\sqrt{\Delta(G)}\leq\tau_t(G_{n,p})\leq c_2\sqrt{\Delta(G)}
\]
\end{theorem}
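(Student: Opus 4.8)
The plan is as follows. The lower bound is immediate from Observation~\ref{obs:DeltaLB}, which gives $\tau_t(H)>\sqrt{t(t-1)\,\Delta(H)}$ for \emph{every} graph $H$; since $t\ge 2$ this holds in particular, with probability $1$, for $H=G_{n,p}$, so we may take $c_1=\sqrt{t(t-1)}$. For the upper bound the idea is to exploit the fact that, when $p=c/n$, the graph $G=G_{n,p}$ is w.h.p.\ ``locally almost a tree'' of maximum degree $\Delta=\Delta(G)=(1+o(1))\log n/\log\log n$, and to run a greedy colouring in breadth-first order of the type used for trees by Cranston, Kim and Kinnersley in the proof of Theorem~\ref{thm:CKKtrees}.

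First I would record the w.h.p.\ structural facts about $G=G_{n,c/n}$ that the argument uses. Writing $B_r(v)$ for the ball of radius $r$ about $v$, and calling $e(H)-|V(H)|+1$ the \emph{excess} of a connected graph $H$, these are: (a) $\Delta(G)=(1+o(1))\log n/\log\log n$; (b) $|B_{2t}(v)|=O_{c,t}(\log n)$ for every $v$; and (c) there is a constant $E=E(c,t)$ such that every ball $B_{2t}(v)$ has excess at most $E$. Fact (a) is classical. For (b) one reveals the ball level by level: once $|B_j(v)|$ has reached order $\log n$ the next level adds only an $O_c(1)$ factor (a Chernoff bound for $\mathrm{Bin}(|B_j(v)|\cdot n,\,c/n)$, union-bounded over $v$), so after $2t$ steps $|B_{2t}(v)|=O_{c,t}(\log n)$. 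For (c), a first-moment computation shows that w.h.p.\ no subgraph on at most $K\log n$ vertices has excess more than $E$, where $K=K(c,t)$ is chosen to exceed the implicit constant in (b). In particular, fixing once and for all a breadth-first spanning forest $T$ of $G$, every ball $B_{2t}(v)$ contains at most $E$ edges lying outside $T$.

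Now colour the vertices of $G$ one at a time in breadth-first order, using a palette $C$ with $|C|=C_0\sqrt{\Delta}$ for a large constant $C_0=C_0(c,t)$. When $v$ is coloured, the constraints come only from already-coloured vertices within distance $t$ of $v$; if $m_d(v)$ is the number of such vertices at distance exactly $d$, the number of $t$-sets forbidden for $v$ is at most
\[
\Bigl(\tbinom{|C|}{t}-\tbinom{|C|-m_1(v)t}{t}\Bigr)+\sum_{d=2}^{t} m_d(v)\tbinom{t}{d}\tbinom{|C|-d}{t-d},
\]
and it suffices that this be strictly less than $\binom{|C|}{t}$. Breadth-first order together with facts (b)--(c) control the $m_d(v)$: one has $m_1(v)=O_{c,t}(1)$, since $v$'s coloured neighbours are its $T$-parent plus at most $2E$ more coming from the $\le E$ non-tree edges near $v$; one has $m_2(v)=O_{c,t}(\Delta)$, since the coloured vertices at distance $2$ are the coloured $T$-siblings of $v$ (at most $\Delta$), its $T$-grandparent, and $O_{c,t}(\Delta)$ further vertices routed through non-tree edges near $v$; and for $d\ge 3$ one may simply use $m_d(v)\le|B_d(v)|=O_{c,t}(\log n)=o(\Delta^{d/2})$ by fact (b), since $\Delta^{d/2}\ge\Delta^{3/2}\gg\log n$. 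Substituting into the display and using $\binom{t}{d}\binom{|C|-d}{t-d}/\binom{|C|}{t}\le\bigl(t^2/(|C|-t)\bigr)^d$, the $d=1$ term is $O(1/\sqrt{\Delta})$, the $d=2$ term is $O_{c,t}(1/C_0^{2})$ (hence $<\tfrac12$ once $C_0$ is large), and each term with $d\ge3$ is $o(1)$ as $n\to\infty$; so for $C_0$ and $n$ large every greedy step succeeds. This gives $\tau_t(G)\le C_0\sqrt{\Delta}$ w.h.p., which together with the lower bound finishes the proof, with $c_2=C_0$.

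The step I expect to be the main obstacle is the bound $m_2(v)=O(\Delta)$ together with the verification that the corresponding $d=2$ term can be made small: this is essentially the heart of the Cranston--Kim--Kinnersley tree argument (the ``$\Theta(\sqrt{\Delta})$ nearly-disjoint sibling sets'' phenomenon), and one has to check it survives the boundedly many cycles and high-degree vertices that may sit near $v$ in $G_{n,c/n}$; the larger distances $d\ge 4$ are harmless because $|B_d(v)|$ is only polylogarithmic while $\Delta^{d/2}$ is much larger. A secondary nuisance is proving fact (c) for balls whose size is only known to be $O_{c,t}(\log n)$ rather than below the threshold at which the naive first-moment bound for excess applies, which is why (b) must be established first and $K$ chosen afterwards.
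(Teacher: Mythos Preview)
Your lower bound via Observation~\ref{obs:DeltaLB} is fine. For the upper bound, note that the paper does not prove this theorem itself but summarizes the argument of \cite{BBDF} just before the proof of Theorem~\ref{thm:RG}: let $H$ be the set of vertices of degree at least $\log^{1/4}n$ together with everything within distance $2t-2$ of such a vertex; w.h.p.\ $G[H]$ is a forest, so Theorem~\ref{thm:CKKtrees} colours it with $O(\sqrt{\Delta})$ colours; then the remaining vertices, all of degree below $\log^{1/4}n$, are coloured greedily without introducing new colours. Your plan --- a single global BFS greedy pass controlled by local ball excess --- is genuinely different, and it has a real gap.

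The faulty step is ``every ball $B_{2t}(v)$ has excess at most $E$, hence contains at most $E$ edges outside the global BFS tree $T$.'' The excess of $G[B_{2t}(v)]$ counts edges beyond \emph{some} spanning tree of the ball, but $T$ restricted to $B_{2t}(v)$ need not be one: a vertex $u\in B_{2t}(v)$ can have its $T$-parent outside the ball, so $T\cap G[B_{2t}(v)]$ may have many components, and the number of non-$T$ edges inside the ball equals the excess plus that component count minus one. Consequently your bound $m_1(v)=O_{c,t}(1)$ fails. For $c>1$ the global BFS has levels of width $\Theta(n)$; for $v$ in such a level each neighbour independently lies in the previous level with probability $\Theta(1)$, so the number of already-coloured neighbours of $v$ behaves like a Poisson of constant mean, and the maximum over all $v$ is $\Theta(\log n/\log\log n)=\Theta(\Delta)$. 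In particular a high-degree vertex can have $m_1(v)=\Theta(\Delta)$, and then $m_1(v)\,t\gg C_0\sqrt{\Delta}=|C|$, so no $t$-set is available for $v$ at all. The long cycles producing these back-edges are invisible to the local excess of $B_{2t}(v)$, which is why fact~(c), even if true, does not yield the control you need. This is precisely what the high/low-degree split in \cite{BBDF} is designed to handle: near high-degree vertices the graph really is a forest (a global structural fact, not a local-excess one), and everywhere else degrees are so small that greedy is trivial.
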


\begin{theorem}\label{thm:RG}
For any fixed integer $t\ge 2$ and real $c>0$ we have that for $p=\frac cn$ the following holds w.h.p. as $n \rightarrow \infty$.
    \[\tau_t(G_{n,p})=\left(1+o(1)\right)\sqrt{t(t-1)\D(G(n, p))} 
    \]
\end{theorem}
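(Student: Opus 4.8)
The lower bound is immediate. Applying Observation~\ref{obs:DeltaLB} to $G=G_{n,p}$ gives, with certainty, $\tau_t(G_{n,p})>\sqrt{t(t-1)\Delta(G_{n,p})}$, and w.h.p.\ $\Delta(G_{n,p})=(1+o(1))\log n/\log\log n$, which in particular tends to infinity, so the ceiling costs nothing asymptotically. (Theorem~\ref{thm:oRG} already gives $\tau_t(G_{n,p})=\Theta(\sqrt{\Delta})$; the whole point of Theorem~\ref{thm:RG} is to pin down the constant.) So the content is the matching upper bound, and the plan is to reduce it as far as possible to the tree bound of Theorem~\ref{thm:trees}, exploiting that for $p=c/n$ the graph $G_{n,p}$ is w.h.p.\ locally tree-like around its high-degree vertices and globally very sparse.

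Fix $\delta>0$, write $\Delta=\Delta(G_{n,p})$ and $\theta=\Delta^{1/(4t)}$, and let $W$ be the set of vertices of degree at least $\theta$. First I would record the structural facts that hold w.h.p., all via routine first-moment estimates: (i) $\Delta\to\infty$; (ii) $G_{n,p}$ is $D_0$-degenerate for a constant $D_0=D_0(c)$, and every radius-$2t$ ball induces a subgraph that is a forest plus at most one extra edge; (iii) the number of vertices of degree $\ge\theta$ is $n^{1-o(1)}$, small enough that w.h.p.\ the radius-$2t$ ball around every vertex of $W$ induces a tree; and (iv) consequently, setting $R:=\{x:d_G(x,W)\le 2t\}$, every vertex of $V(G)\setminus R$ has its entire radius-$t$ ball disjoint from $W$, hence at most $(t+1)\theta^t=(t+1)\Delta^{1/4}$ vertices within distance $t$. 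The one slightly delicate point is (iii): since $\theta$ grows only like $(\log n)^{1/(4t)}$, one needs the $n^{-o(1)}$ coming from the degree tail to beat the polylogarithmic factors $\Delta^{O(t)}$ contributed by ball sizes, which it does because $(\log n)^{1/(4t)}$ beats $\log\log n$.

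Given the structure, I would colour $G_{n,p}$ with a single palette $C$ of size $m:=\lceil(1+\delta)\sqrt{t(t-1)\Delta}\rceil$ in two stages. Stage~1: $t$-tone-colour $G[R]$ using only $m_1:=\lceil(1+\delta/2)\sqrt{t(t-1)\Delta}\rceil$ of the colours. Stage~2: extend greedily to $V(G)\setminus R$ using all of $C$. Stage~2 is routine: for $u\in V(G)\setminus R$ the previously-coloured vertices constraining $f(u)$ all lie within distance $t$ of $u$, so by (iv) there are at most $(t+1)\Delta^{1/4}$ of them; for each $d\le t$ the number of distance-$d$ constraints is $O_t(\Delta^{d/(4t)})$ and each forbids at most $\binom td$ of the $\Theta_t(\Delta^{d/2})$ available $d$-subsets of $C$, so a union bound over $d$ shows the forbidden $t$-subsets of $C$ form a vanishing fraction and a valid $f(u)$ always exists.

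Hence the real work — and the main obstacle — is Stage~1: $R$ is locally a tree around each vertex of $W$ but \emph{not} globally a forest, since $|W|=n^{1-o(1)}$ is large and $G[R]$ carries $\sim n^{1-o(1)}$ cycles, so one cannot simply quote Theorem~\ref{thm:trees}. The plan is to re-run the proof of Theorem~\ref{thm:trees} on $G[R]$ directly: process $R$ in a BFS-type order rooted near the high-degree vertices, and at each high-degree vertex carry out the near-optimal partial-Steiner-system packing of $t$-subsets of $C$ that powers the tree bound — legitimate because locally the region is a tree — while tracking the effect of the (at most one, by (ii)) extra edge per ball and of the overlaps between neighbourhoods of distinct vertices of $W$; one must check that these perturb the relevant counts by only a $1+o(1)$ factor, which is then absorbed by the gap between $m_1$ and $(1+\delta)\sqrt{t(t-1)\Delta}$. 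A variant that quotes Theorem~\ref{thm:trees} essentially verbatim is to take the threshold defining $W$ to be $(1-\varepsilon)\Delta$ for a small constant $\varepsilon<1/4$ instead of $\Delta^{1/(4t)}$: then $|W|=n^{\varepsilon+o(1)}$, the vertices of $W$ are w.h.p.\ pairwise at distance $>4t$ with tree neighbourhoods, and $G[R]$ is a genuine disjoint union of trees — but then $V(G)\setminus R$ still has maximum degree up to $(1-\varepsilon)\Delta$, so Stage~2 is no longer trivial and one has to iterate the whole decomposition over $O_\varepsilon(\log\Delta)$ geometrically decreasing degree thresholds, peeling off and colouring one forested layer at a time and controlling the already-coloured boundary seen by each layer. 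Either way, the crux is verifying that the $\delta$-slack genuinely swallows every error term.
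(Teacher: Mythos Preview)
Your lower bound matches the paper's (Observation~\ref{obs:DeltaLB}). For the upper bound, however, you are working much harder than necessary, and your Stage~1 is left genuinely incomplete: both variants are only sketched, and you yourself flag that ``the crux is verifying that the $\delta$-slack genuinely swallows every error term'' without carrying out that verification. As written, the proposal does not establish the upper bound.

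The paper's proof is essentially three lines, because it invokes a structural fact already established in the proof of Theorem~\ref{thm:oRG} in \cite{BBDF}: taking $H$ to be the vertices of degree at least $\log^{1/4} n$ together with all vertices within distance $2t-2$ of them, one has w.h.p.\ that $G[H]$ is a \emph{genuine forest} (not merely locally tree-like), and moreover the remaining low-degree vertices can be greedily coloured without introducing any new colours. Hence w.h.p.\ $\tau_t(G_{n,p}) = \tau_t(T)$ for some tree $T$ with $\log^{1/4} n \le \Delta(T) \le \Delta(G_{n,p})$; since $\Delta(T)\to\infty$, Theorem~\ref{thm:trees} gives $\tau_t(T) \le (1+o(1))\sqrt{t(t-1)\Delta(T)} \le (1+o(1))\sqrt{t(t-1)\Delta(G_{n,p})}$ directly.

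So your ``main obstacle''---that the high-degree region carries $n^{1-o(1)}$ cycles and cannot be handled by quoting Theorem~\ref{thm:trees}---does not actually arise once you use the threshold $\log^{1/4} n$ and the forest result from \cite{BBDF}. There is no need to re-run the hypergraph matching argument on a non-tree or to iterate over geometrically decreasing degree thresholds; the reduction to Theorem~\ref{thm:trees} is immediate.
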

Of course, it is well known (see e.g.~\cite{friezebook}) that for $p=c/n$ we have w.h.p. $\Delta(G(n, p)) = (1+o(1)) \log n / \log \log n$. Thus we have
\begin{corollary}
 In the setting of Theorem \ref{thm:RG} we have w.h.p.
    \[    \tau_t(G_{n,p})=(1+o(1))\sqrt{\frac{t(t-1)\log(n)}{\log\log(n)}}.
    \]
\end{corollary}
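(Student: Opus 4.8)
The lower bound is immediate: by Observation~\ref{obs:DeltaLB} applied to $G=G_{n,p}$ we have, for every outcome, $\tau_t(G_{n,p})>\sqrt{t(t-1)\,\D(G_{n,p})}$. So it suffices to prove the matching upper bound. Fix $\d>0$; the plan is to show that w.h.p.\ $\tau_t(G_{n,p})\le(1+\d)\sqrt{t(t-1)\,\D(G_{n,p})}$, after which letting $\d\to0$ slowly gives the theorem. The idea is that a sparse random graph is, away from a negligible set of vertices, locally a tree, so the upper bound can be reduced to Theorem~\ref{thm:trees}.

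First I would record the structural facts, all holding w.h.p. Write $\D:=\D(G_{n,p})$; it is well known (see~\cite{friezebook}) that $\D=(1+o(1))\log n/\log\log n$, so in particular $\D\to\infty$. Let
\[
 S:=\cbrac{\,v\in V(G_{n,p})\ :\ B_G(v,2t)\text{ is not a tree}\,},
\]
where $B_G(v,r)$ is the subgraph induced on the vertices within distance $r$ of $v$. A routine first--moment estimate gives $\Mean|S|=O_{t,c}(1)$: if $v\in S$, then (taking a shortest cycle inside the ball together with a shortest path joining it to $v$) $G_{n,p}$ contains a connected subgraph $H$ with $v\in V(H)$, $|V(H)|\le K$ and $|E(H)|\ge|V(H)|$ for some constant $K=K(t)$; since every $v\in S$ lies in such an $H$, $\Mean|S|\le K\,\Mean\big(\#\{H\subseteq G_{n,p}\text{ of this type}\}\big)\le K\sum_{k\le K}\binom nk 2^{\binom k2}(c/n)^k=O_{t,c}(1)$, each summand being a constant. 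By Markov's inequality, w.h.p.\ $|S|\le\log\log\log n=o(\sqrt\D)$.

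Given this, I would build the colouring as follows. Let $C$ be a palette with $|C|=\ceilbrac{(1+\tfrac\d2)\sqrt{t(t-1)\D}}$ and let $C'$ be a disjoint set of $t|S|$ further ``private'' colours, split as $C'=\bigsqcup_{v\in S}C'_v$ with $|C'_v|=t$. For each $v\in S$ put $\ell(v):=C'_v$. For the vertices outside $S$, run the colouring procedure from the proof of Theorem~\ref{thm:trees}: process $V(G_{n,p})\sm S$ in breadth--first order within each component, and at each vertex $v$ choose $\ell(v)\in\binom Ct$ exactly as that proof prescribes, using the tree $B_G(v,t)$ together with the colours already assigned inside it. This makes sense because for $v\notin S$ the ball $B_G(v,t)\subseteq B_G(v,2t)$ is a tree, and $G_{n,p}$--distances inside it coincide with distances in that tree. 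Granting that the procedure is well defined and always feasible (see below), the resulting $\ell$ is a valid $t$--tone colouring of $G_{n,p}$: the only constraints are for pairs at distance $\le t$; a constraint between two vertices of $V(G_{n,p})\sm S$ is one of those handled by the Theorem~\ref{thm:trees} argument, run on a tree on which the relevant distances are correct; and any constraint involving a vertex $v\in S$ holds trivially, as $\ell(v)=C'_v$ is disjoint from every other label. The total number of colours is $|C|+|C'|=(1+\tfrac\d2)\sqrt{t(t-1)\D}+t|S|=(1+\tfrac\d2)\sqrt{t(t-1)\D}+o(\sqrt\D)\le(1+\d)\sqrt{t(t-1)\D}$ for $n$ large, since $\D\to\infty$ (which also ensures $\D\ge\D_0(t,\d/2)$, as needed to invoke the argument of Theorem~\ref{thm:trees}).

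The step I expect to be the main obstacle is justifying that the argument of Theorem~\ref{thm:trees} is sufficiently \emph{local}: that the choice of $\ell(v)$ can be made using only the structure of $B_G(v,t)$ and the labels already assigned inside it, and that this choice stays feasible even though some vertices of $B_G(v,t)$ carry private colours --- those extra pre--coloured vertices place no constraint on $\ell(v)$, since their labels lie in $C'$, so they can only help. If the proof of Theorem~\ref{thm:trees} is algorithmic in this sense the transfer is immediate; if it is more global it will need to be recast. Finally, note that the naive alternative --- delete $S$ and apply Theorem~\ref{thm:trees} to $G_{n,p}-S$ as a black box --- does not work: $G_{n,p}-S$ need not be a forest, as it may still contain long cycles, and deleting vertices can only increase distances, so one would still have to control pairs whose shortest $G_{n,p}$--path passes through $S$; this is precisely why the construction is organised around the local balls $B_G(v,t)$.
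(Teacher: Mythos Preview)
You are re-proving Theorem~\ref{thm:RG}, not the Corollary. In the paper the Corollary is one line: Theorem~\ref{thm:RG} gives $\tau_t(G_{n,p})=(1+o(1))\sqrt{t(t-1)\Delta}$ w.h.p., and the standard fact $\Delta(G_{n,c/n})=(1+o(1))\log n/\log\log n$ is substituted. So the intended proof is immediate once Theorem~\ref{thm:RG} is available.

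Regarding your attempt at the upper bound of Theorem~\ref{thm:RG} itself: the gap you flag is real. The proof of Theorem~\ref{thm:trees} is \emph{not} a local greedy procedure. It colours an entire BFS layer $P_i$ simultaneously by encoding the extension problem as a bipartite hypergraph and invoking the Pippenger/Delcourt--Postle matching theorem to find an $X$-perfect matching. The hypergraph construction uses the unique root-path $p_v$ for every $v\in P_i$, and the correctness argument (that a matching yields no distance-$2j$ conflict) hinges on the fact that any two vertices in the same layer meet at a unique common ancestor. None of this is available vertex-by-vertex, and none of it survives on a graph that merely has tree-like $t$-balls but may contain long cycles: two vertices $u,v$ in the same BFS layer of $G_{n,p}\setminus S$ can lie on a long cycle, so there is no well-defined common ancestor to place in $Y$, and the codegree/degree calculations collapse. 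Your own final paragraph correctly diagnoses why the ``delete $S$ and cite Theorem~\ref{thm:trees}'' route fails; unfortunately the localisation you propose instead fails for the same structural reason.

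The paper's route around this is different and cleaner: following \cite{BBDF}, one lets $H$ be the set of vertices of degree at least $\log^{1/4}n$ together with their $(2t{-}2)$-neighbourhoods, shows that w.h.p.\ $G[H]$ is a genuine forest, applies Theorem~\ref{thm:trees} as a black box to $G[H]$, and then argues that the remaining low-degree vertices can be coloured greedily without new colours. Thus $\tau_t(G_{n,p})=\tau_t(G[H])\le(1+o(1))\sqrt{t(t-1)\Delta(G[H])}\le(1+o(1))\sqrt{t(t-1)\Delta}$. The key point you are missing is that the reduction to a forest is global (one exhibits an honest forest subgraph that carries all the difficulty), not local.
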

In this paper we use standard asymptotic notation $O(-), o(-), \Omega(-), \Theta(-)$. We will specify what variable the asymptotics are in (for example, in this subsection it is as $n \rightarrow \infty$).

\subsection{Complete multipartite graphs}

For $a_i\in\mathbb{N}$ for $i\in[n]$ we define $K_{a_1,a_2,\ldots,a_n}$ to be the complete $n$-partite graph with parts of order $a_1,\ldots, a_{n}$. The following precise result is known for the $2$-tone chromatic number proved by Loe, Middelbrooks,and Morris in \cite{LMM}. 
\begin{theorem}[\cite{LMM}]
    For all $a_i\in \mathbb{N}$ over $i\in[b]$ we have that

\[\tau_2(K_{a_1,a_2,\ldots,a_b})=\sum_{i=1}^b \left\lceil\frac{1+\sqrt{8a_i+1}}{2}\right\rceil\]
\end{theorem}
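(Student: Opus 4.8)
The plan is to establish matching upper and lower bounds, both equal to $\sum_{i=1}^b c_i$ where $c_i := \left\lceil \frac{1+\sqrt{8a_i+1}}{2}\right\rceil$. First I would record two structural facts about $K_{a_1,\dots,a_b}$, valid whenever $b \ge 2$ (the case $b=1$ is degenerate: the graph is then edgeless, hence either a single vertex, consistent with the formula, or disconnected and outside the scope): any two vertices in distinct parts are adjacent and so at distance $1$; and any two vertices in a common part are non-adjacent but have a common neighbour (namely any vertex of another part), hence at distance exactly $2$. I would also note the elementary identity that $c_i$ is the least integer $c$ with $\binom{c}{2}\ge a_i$, obtained by solving $c(c-1)/2\ge a_i$; equivalently, $c_i$ colours are exactly enough to label $a_i$ vertices with pairwise distinct $2$-subsets.

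For the upper bound I would exhibit the natural construction: pick pairwise disjoint colour sets $C_1,\dots,C_b$ with $|C_i| = c_i$, and inside the $i$-th part assign to its $a_i$ vertices distinct $2$-subsets of $C_i$ (possible by the choice of $c_i$). Checking validity is immediate from the distance structure: two vertices in a common part get distinct $2$-sets, so they share at most $1 = d-1$ colours; two vertices in different parts draw from disjoint palettes, so they share $0 < 1 = d$ colours. This uses $\sum_i c_i$ colours, so $\tau_2(K_{a_1,\dots,a_b}) \le \sum_i c_i$.

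For the lower bound, consider an arbitrary $2$-tone colouring $\ell$ with colour set $C$, and for the $i$-th part $V_i$ let $C_i := \bigcup_{v \in V_i} \ell(v)$. The crucial step is that the sets $C_i$ are pairwise disjoint: a colour lying in both $C_i$ and $C_j$ for $i \ne j$ would appear on two adjacent vertices (one in each part), contradicting the requirement that adjacent vertices receive disjoint colour sets. Hence $|C| \ge \sum_i |C_i|$. Within each part, the distance-$2$ requirement forces the $a_i$ colour sets to intersect pairwise in at most one element, i.e.\ to be $a_i$ distinct $2$-subsets of $C_i$, so $\binom{|C_i|}{2} \ge a_i$ and therefore $|C_i| \ge c_i$; summing yields $|C| \ge \sum_i c_i$. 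I do not expect a real obstacle: essentially everything rides on the single observation that the distance-$1$ constraint forces the parts' colour palettes to be disjoint, after which the problem decouples into the trivial per-part counting that defines $c_i$. The only things requiring (routine) care are verifying the distance structure and disposing of the degenerate $b=1$ case.
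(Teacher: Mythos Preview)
Your proof is correct, but there is nothing to compare it against: the paper does not prove this theorem. It is quoted as a known result of Loe, Middelbrooks, and Morris and is used only as motivation for Theorem~\ref{thm:multipartite}. That said, your argument is exactly the standard one, and the paper's own proof of the related Theorem~\ref{thm:multipartite} uses the same lower-bound mechanism you describe (disjoint palettes across parts forced by the distance-$1$ constraint, followed by a pair-counting bound within each part).
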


 It was by this result we were motivated to investigate the general case for $t$-tone in our following result, which is basically a corollary of Theorem \ref{thm:trees}. This next result can be viewed as an approximate extension of the above theorem to all $t$ for complete multipartite graphs where all parts are large.
\begin{theorem}\label{thm:multipartite}
    
    For all positive integers $t$ and $b$ and $\delta>0$ there exists an $a_0$ such that for $a_1,a_2,\ldots,a_b$ where $a_i>a_0$ for $i\in[b]$ we have that
    \[\sum_{i=1}^b\sqrt{t(t-1)a_i}\leq\tau_t(K_{a_1,a_2,\ldots,a_b})\leq(1+\delta)\sum_{i=1}^b\sqrt{t(t-1)a_i}\]
\end{theorem}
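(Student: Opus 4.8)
The plan is to reduce everything to the star case. Assume $b\ge 2$ (so that the graph is connected). A complete multipartite graph has diameter $2$: two vertices in distinct parts are at distance $1$, and two vertices in a common part are at distance exactly $2$ (non-adjacent, but with a common neighbour in any other part). Hence a $t$-tone coloring of $K_{a_1,\dots,a_b}$ is precisely an assignment of $t$-sets to the vertices so that (a) the $t$-sets used on two different parts are disjoint, and (b) within each part $V_i$ the assigned $t$-sets pairwise meet in at most one color. For (b), observe that the $a_i$ leaves of the star $S_{a_i}$ are mutually at distance $2$, so the leaf-colors of any $t$-tone coloring of $S_{a_i}$ give exactly such an assignment on $V_i$.

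First I would prove the lower bound. Fix an optimal $t$-tone coloring with palette $C$ and let $C_i\subseteq C$ be the set of colors appearing on $V_i$; by (a) the sets $C_i$ are pairwise disjoint, so $\tau_t(K_{a_1,\dots,a_b})=|C|\ge\sum_i|C_i|$. By (b), the $a_i\binom t2$ unordered pairs of colors arising from the $t$-sets assigned in $V_i$ are all distinct, so $a_i\binom t2\le\binom{|C_i|}2$, which gives $|C_i|>\sqrt{t(t-1)a_i}$ (cf.\ Observation \ref{obs:DeltaLB}). Summing over $i$ yields the lower bound, and note that this direction needs no size hypothesis on the $a_i$.

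For the upper bound I would invoke Theorem \ref{thm:trees}. Apply it with the given $\delta$ to each star $S_{a_i}$ --- a tree of maximum degree $a_i$ --- and set $a_0:=\D_0(t,\delta)$. For every $i$ with $a_i>a_0$ we get a $t$-tone coloring of $S_{a_i}$ using at most $(1+\delta)\sqrt{t(t-1)a_i}$ colors. Discard the center, keep the $a_i$ leaf-colors, and relabel the colors so that the $b$ parts use pairwise disjoint palettes $P_1,\dots,P_b$ with $|P_i|\le(1+\delta)\sqrt{t(t-1)a_i}$. Assigning the resulting $t$-sets to the vertices of $V_i$ satisfies (a) (the $P_i$ are disjoint) and (b) (leaves of a star pairwise share at most one color), so it is a valid $t$-tone coloring using $\sum_i|P_i|\le(1+\delta)\sum_i\sqrt{t(t-1)a_i}$ colors.

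I do not expect a genuine obstacle. Once one sees that the multipartite constraints decouple into ``disjoint palettes across parts'' together with ``a star-type coloring inside each part'', the lower bound is the same double count as in Observation \ref{obs:DeltaLB} and the upper bound is a direct application of Theorem \ref{thm:trees}; the real work was already done there. The only points needing a sentence of care are the diameter-$2$ structural claim and the relabelling that makes the per-part palettes disjoint.
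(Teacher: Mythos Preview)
Your proposal is correct and follows essentially the same approach as the paper: both the lower bound (disjoint palettes across parts plus the pair-counting estimate on each $V_i$) and the upper bound (apply Theorem~\ref{thm:trees} to each star $S_{a_i}$ and use disjoint color sets per part) match the paper's argument almost line for line. Your extra remarks on the diameter-$2$ structure and the need for $b\ge 2$ are fine clarifications but do not change the route.
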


\subsection{2-tone coloring graphs with small chromatic number}

Cranston, Kim and Kinnersley \cite{CKK13} proved the following.
\begin{theorem}[ \cite{CKK13}]\label{thm:CKKDelta}
    Let $G$ be a graph with $\Delta(G)=\D$. Then 
    \[
    \tau_2(G) \le \lceil (2 + \sqrt 2) \D \rceil, \qquad \text{ and if $G$ is bipartite then }\quad \tau_2(G) \le 2 \lceil \sqrt 2 \D \rceil.
    \]
   
\end{theorem}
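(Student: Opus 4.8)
The plan is to establish both inequalities at once by coloring the vertices sequentially in an arbitrary order, maintaining a partial $2$-tone coloring, and arguing that the palette is always large enough to extend it. Recall that for a pair $u,v$ the constraint $|\ell(u)\cap\ell(v)|<d(u,v)$ is only restrictive when $d(u,v)\in\{1,2\}$: at distance $1$ it forces $\ell(u)\cap\ell(v)=\emptyset$, and at distance $2$ it forces $\ell(u)\ne\ell(v)$. So when we come to color a vertex $v$ of degree at most $\D$, the already-colored neighbors of $v$ forbid the at most $2\D$ colors lying in their pairs, while each already-colored vertex at distance exactly $2$ from $v$ forbids exactly one pair, and there are at most $\D(\D-1)$ such vertices. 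The whole proof then reduces to a counting inequality: if we can choose the two colors of $\ell(v)$ from outside the $\le 2\D$ forbidden colors and still have more than $\D(\D-1)$ pairs to choose from, we can avoid the forbidden pairs and finish the step.

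For the general bound I would take $|C|=\lceil(2+\sqrt2)\D\rceil$; since $2\D$ is an integer this equals $2\D+\lceil\sqrt2\D\rceil$. When $v$ is reached, deleting the at most $2\D$ colors on its neighbors leaves at least $\lceil\sqrt2\D\rceil$ colors, hence at least $\binom{\lceil\sqrt2\D\rceil}{2}$ candidate pairs for $\ell(v)$. It then suffices to verify the elementary inequality
\[
\binom{\lceil\sqrt2\D\rceil}{2}\ \ge\ \D(\D-1)+1,
\]
which holds for every $\D\ge 1$: for $\D$ at least a small constant it follows from $\lceil\sqrt2\D\rceil\ge\sqrt2\D$ and a routine comparison, and the finitely many remaining values ($\D\in\{1,2,3\}$) are checked by hand, where the ceiling provides ample room. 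Granting the inequality, some candidate pair avoids all $\le\D(\D-1)$ forbidden pairs, and we assign it to $v$.

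For the bipartite bound I would exploit that adjacent vertices now sit in opposite parts. Fix a bipartition $V(G)=X\sqcup Y$ into independent sets, split the palette as $C=C_1\sqcup C_2$ with $|C_1|=|C_2|=\lceil\sqrt2\D\rceil$, and require every vertex of $X$ to receive a pair inside $C_1$ and every vertex of $Y$ a pair inside $C_2$. Then every edge joins a $C_1$-pair to a $C_2$-pair, so the distance-$1$ (disjointness) constraint is satisfied for free; moreover, in a bipartite graph any two vertices at distance $2$ lie in the same part, so the only constraints left to enforce are that two same-part vertices at distance $2$ receive distinct pairs. A given vertex has at most $\D(\D-1)$ such partners, so a greedy coloring of each part succeeds provided $\binom{\lceil\sqrt2\D\rceil}{2}\ge\D(\D-1)+1$ — the same inequality as before. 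Hence $\tau_2(G)\le|C_1|+|C_2|=2\lceil\sqrt2\D\rceil$.

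I expect the only delicate point to be the bookkeeping rather than any idea: checking that the forbidden sets genuinely have size at most $2\D$ (neighbor colors) and $\D(\D-1)$ (distance-$2$ vertices), and pinning down the inequality $\binom{\lceil\sqrt2\D\rceil}{2}\ge\D(\D-1)+1$ for all $\D\ge 1$, small cases included. The conceptual content is just the choice to color greedily together with the right split of the palette into an ``adjacency buffer'' of $2\D$ colors (which the bipartite case does not need) and a ``pair pool'' of $\lceil\sqrt2\D\rceil$ colors, sized so that $\binom{\,\cdot\,}{2}$ beats the distance-$2$ degree.
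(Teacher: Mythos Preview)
The paper does not give its own proof of this theorem: it is quoted from \cite{CKK13} and used only as background. So there is no in-paper argument to compare against. That said, your greedy proof is correct, and for the bipartite bound it matches exactly what the paper says about the original argument (``two disjoint sets of colors, one for each part of the bipartition''), so you have reconstructed the intended approach. The only remark is that your key inequality $\binom{\lceil\sqrt2\D\rceil}{2}\ge \D(\D-1)+1$ is tight at $\D=1,2$ (with equality), so the ``ample room'' phrasing for small cases is slightly optimistic, though the inequality does hold.
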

Our next result is partly inspired by the above result for bipartite $G$. The $2$-tone coloring they used in the proof had two disjoint sets of colors, one for each part of the bipartition of $G$. Thus, pairs of vertices $u, v$ at distance 1 would never share any colors, and so one only has to address the pairs at distance 2. For our result we will let $\chi(G)=k$, and in the proof we will use $k$ disjoint sets of colors, one for each color class in a proper $k$-coloring of $G$.\\

\begin{theorem}\label{thm:smallchi1}
    Let $G$ be a graph with $\chi(G)=k$. Consider a proper $k$-coloring of $G$ and let $V_1, \ldots, V_k$ be the color classes. Let $G_i$ be the graph with vertex set $V_i$ such that for any two vertices $u, v \in V_i$ we have $uv \in E(G_i)$ whenever $d_G(u, v) =2$. Then 
    \begin{equation}\label{eqn:smallchi}
        \tau_2(G) \le k+ \sum_{i=1}^k \ceilbrac{\sqrt{2\chi(G_i)}}.
    \end{equation}
\end{theorem}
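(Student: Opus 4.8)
The plan is to build an explicit $2$-tone coloring of $G$ by combining $k$ separate color palettes, one per class $V_i$ of the fixed proper $k$-coloring. For each class $V_i$ I will use a dedicated color, call it $c_i$, that is placed in \emph{every} vertex of $V_i$ — this single color accounts for one of the two colors each vertex needs, and it uses up the "$k$" term in \eqref{eqn:smallchi}. Because the $k$ colors $c_1,\dots,c_k$ are distinct and a vertex of $V_i$ only carries $c_i$ from this batch, no two vertices can share two of these shared colors, and vertices $u\in V_i$, $v\in V_j$ with $i\neq j$ share none of them. In particular two adjacent vertices lie in different classes and hence share no shared color; the only remaining danger at distance $1$ is the possibility that they accidentally share their \emph{other} color, which I will rule out by keeping the "other-color" palettes of distinct classes disjoint.

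Next, for the second color of each vertex I need, within each class $V_i$, to assign colors from a private palette $P_i$ (disjoint across $i$, and disjoint from $\{c_1,\dots,c_k\}$) so that any two vertices of $V_i$ at distance exactly $2$ in $G$ get distinct colors from $P_i$ — this is exactly a proper coloring of the auxiliary graph $G_i$ defined in the statement. Thus I can take $|P_i| = \chi(G_i)$, but I want the displayed bound with $\lceil\sqrt{2\chi(G_i)}\rceil$, so instead I should $2$-tone color $G_i$ (or rather, observe that a proper coloring of $G_i$ suffices and the $\sqrt{\phantom{x}}$ must come from somewhere else). Reconsidering: the right move is to apply a known $2$-tone-type bound to each $G_i$. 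Concretely, I will take a $2$-tone coloring argument on $G_i$: each vertex of $V_i$ should get a \emph{pair} of colors from $P_i$, where adjacent vertices in $G_i$ (distance-$2$ pairs in $G$) share at most one, and one can manage this with $|P_i|$ on the order of $\sqrt{\chi(G_i)}$ by using pairs as labels — any set system of $2$-subsets of a ground set of size $m$ in which adjacent vertices get distinct $2$-subsets needs only $m \approx \sqrt{2\chi(G_i)}$ since there are $\binom m2$ available pairs and a proper coloring of $G_i$ needs only $\chi(G_i)$ labels. So I assign to each $u \in V_i$ a $2$-element label $L(u)\subseteq P_i$ with $|P_i| = \lceil\sqrt{2\chi(G_i)}\rceil$ (enough pairs since $\binom{|P_i|}{2}\ge\chi(G_i)$) such that $L(u)\ne L(v)$ whenever $uv\in E(G_i)$; this is possible by properly coloring $G_i$ and injecting the color set into $\binom{P_i}{2}$.

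Then the full coloring is $\ell(u) = \{c_i\}\cup$ (something of size $1$) — wait, that gives only two colors total if the second part has size $1$, but I just assigned a $2$-set $L(u)$. I should instead drop the shared color $c_i$ for the distance-$2$ bookkeeping and let $\ell(u) = L(u) \subseteq P_i$ directly, with the $c_i$'s not needed at all; but then $|C| = \sum_i \lceil\sqrt{2\chi(G_i)}\rceil$ and the $+k$ is wasteful, contradicting the theorem's intent. The correct reconciliation: use $\ell(u) = \{c_i\} \cup \{x_u\}$ where $x_u \in P_i$ is a single color forming a proper coloring of $G_i$ — no, that needs $|P_i| = \chi(G_i)$. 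The displayed bound with the square root forces the pair-label trick, so $\ell(u)$ must be a $2$-subset of $P_i$ and the $+k$ comes from a \emph{different} accounting. I would resolve this by having $\ell(u) = \{c_i\}\cup\{x_u\}$ where now $(c_i, x_u)$ ranges over a $2$-tone-style pair structure: adjacent vertices $u\in V_i$, $v\in V_j$ get $\{c_i,x_u\}$ vs $\{c_j,x_v\}$ which share at most $0<1$ colors automatically; distance-$2$ vertices $u,v\in V_i$ get $\{c_i,x_u\}$ vs $\{c_i,x_v\}$ sharing exactly $c_i$ plus possibly $x_u=x_v$, so I need $x_u\ne x_v$ whenever $d_G(u,v)=2$, i.e.\ a proper coloring of $G_i$ with palette $P_i$ disjoint across classes — giving $\sum\chi(G_i)$, still not the square root.

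The honest situation is that the square-root improvement must exploit that distance-$2$ vertices in the \emph{same} class may still be allowed to share one color if that shared color is $c_i$ and their private colors differ, \emph{but also} pairs in the same class that are adjacent in $G_i$ only need to differ in one coordinate of a pair — so I will ultimately let each vertex of $V_i$ receive a $2$-subset of $\{c_i\}\cup P_i$ that always contains... this is getting circular, so the key structural lemma I would isolate and prove first is: \emph{if $H$ is a graph, then one can assign to each vertex a $2$-subset of a ground set of size $\lceil\sqrt{2\chi(H)}\rceil + O(1)$ so that adjacent vertices' $2$-subsets intersect in at most one element} — which follows by properly coloring $H$ and embedding the $\chi(H)$ colors into distinct $2$-subsets of a $\lceil\sqrt{2\chi(H)}\,\rceil$-set (possible since $\binom m2\ge\chi(H)$ for $m=\lceil\sqrt{2\chi(H)}\rceil$, as $m(m-1)\ge 2\chi(H)-O(\sqrt{\chi(H)})$, absorbing the lower-order slack into the ceiling or a $+1$). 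Applying this to each $G_i$ with a private palette $P_i$ of size $\lceil\sqrt{2\chi(G_i)}\rceil$ plus reserving a single extra color $c_i$ per class to guarantee that same-class distance-$2$ pairs whose $2$-subsets happen to coincide are still fine — no: if the $2$-subsets are distinct they intersect in $\le 1$, which is exactly the distance-$2$ requirement, and adjacent-in-$G$ pairs are in different classes with disjoint palettes so intersect in $0$. \textbf{Therefore the $+k$ is not needed for the square-root version and I suspect the theorem as stated includes it for a cleaner/uniform statement or for a variant where one color per class is genuinely forced; I would simply prove the stronger bound $\tau_2(G)\le\sum_{i=1}^k\lceil\sqrt{2\chi(G_i)}\rceil$ and note it implies \eqref{eqn:smallchi}.} Concretely, the steps in order: (1) fix the proper $k$-coloring and define $G_i$; (2) state and prove the embedding lemma above; (3) for each $i$, properly color $G_i$ and compose with the embedding to get a labeling $\ell_i:V_i\to\binom{P_i}{2}$ with $|P_i|\le\lceil\sqrt{2\chi(G_i)}\rceil$ and the palettes $P_i$ pairwise disjoint; (4) verify the $t$-tone condition for $t=2$: distance-$1$ pairs are in distinct classes hence share $0\le 0$ colors, distance-$2$ pairs in the same class share at most $1$ color by the embedding lemma, distance-$2$ pairs in different classes share $0$ colors, and all pairs at distance $\ge 2$ need only share $\le 1$ which is automatic; (5) conclude $|C| = \sum|P_i|$ gives the bound. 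The main obstacle is step (2)/(4): making sure the embedding lemma's bound really is $\lceil\sqrt{2\chi(G_i)}\rceil$ and not $\lceil\sqrt{2\chi(G_i)}\rceil + 1$ — i.e.\ checking $\binom{m}{2}\ge\chi(G_i)$ exactly for $m = \lceil\sqrt{2\chi(G_i)}\rceil$, which may fail by one in boundary cases and is precisely where the spare color $c_i$ per class (the "$+k$") earns its keep, patching each class that is one pair short by adding $c_i$ to the palette so $\binom{|P_i|+1}{2}$ pairs become available.
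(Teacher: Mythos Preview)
Your final plan is correct and is exactly the paper's argument: use pairwise disjoint palettes $C_i$, assign each vertex of $V_i$ a $2$-subset of $C_i$ so that vertices adjacent in $G_i$ receive distinct $2$-subsets (which is possible via a proper coloring of $G_i$ once $\binom{|C_i|}{2}\ge\chi(G_i)$), and take $|C_i|=1+\lceil\sqrt{2\chi(G_i)}\rceil$ precisely because $m=\lceil\sqrt{2\chi}\rceil$ can fail $\binom{m}{2}\ge\chi$ (e.g.\ $\chi=2,4,7$). Your earlier suggestion that the ``$+k$'' is superfluous and that $\sum_i\lceil\sqrt{2\chi(G_i)}\rceil$ alone suffices is therefore not right in general, and you correctly diagnose this at the end; strip out the false starts and what remains is the paper's proof.
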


\begin{theorem}\label{thm:smallchi2}
    There is an absolute constant $C$ such that we have the following. Let $G$ be a graph with maximum degree $\Delta$. Suppose $G$ has the following property: for every vertex $u$ there are at most $\Delta^4/f$ pairs of vertices $v, w$ such that $d(u, v) = d(u, w) = d(v, w)=2$. Then we have 
    \begin{equation}\label{eqn:smallchicor}
        \tau_2(G) \le  \chi(G) \rbrac{1+ \frac{C\Delta}{\sqrt{\log f}}} .
    \end{equation}
\end{theorem}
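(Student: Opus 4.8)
The plan is to deduce this from Theorem \ref{thm:smallchi1} by bounding $\chi(G_i)$ for each color class $V_i$. Recall that $G_i$ is the graph on $V_i$ in which $u \sim v$ iff $d_G(u,v) = 2$. Two facts drive the argument. First, the maximum degree of $G_i$ is at most $\Delta(\Delta-1) < \Delta^2$, since a vertex $u \in V_i$ can reach at most $\Delta(\Delta-1)$ vertices at distance exactly $2$ in $G$. So trivially $\chi(G_i) \le \Delta^2$, which only recovers the Cranston--Kim--Kinnersley-type bound $\tau_2(G) = O(\chi(G)\,\Delta)$. To do better we use the hypothesis: the condition that every vertex $u$ lies in at most $\Delta^4/f$ pairs $v,w$ with $d(u,v)=d(u,w)=d(v,w)=2$ says precisely that in $G_i$, every vertex is in at most $\Delta^4/f$ edges among its neighborhood, i.e.\ $G_i$ is locally sparse: the neighborhood of any vertex of $G_i$ spans at most $\Delta^4/f$ edges. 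Since a vertex of $G_i$ has at most $\Delta^2$ neighbors, its neighborhood — a graph on at most $\Delta^2$ vertices with at most $\Delta^4/f$ edges — has average degree at most $2(\Delta^4/f)/\Delta^2 = 2\Delta^2/f$, so it is \emph{far} from complete when $f$ is large.

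The key step is then to invoke a theorem of the form: a graph with maximum degree $D$ in which every neighborhood spans at most $D^2/s$ edges has chromatic number $O(D/\log s)$. This is exactly the ``locally sparse implies smaller chromatic number'' phenomenon, due (in various forms) to Alon--Krivelevich--Sudakov, Vu, and Molloy--Reed. Applying it to $G_i$ with $D \le \Delta^2$ and $s = \Theta(f)$ (here $D^2/s = \Delta^4/s$ should match the edge bound $\Delta^4/f$, so $s \asymp f$), we get $\chi(G_i) \le C' \Delta^2 / \log f$ for an absolute constant $C'$. Plugging into \eqref{eqn:smallchi},
\[
\tau_2(G) \le k + \sum_{i=1}^k \ceilbrac{\sqrt{2\chi(G_i)}} \le k + k\rbrac{1 + \sqrt{\tfrac{2C'\Delta^2}{\log f}}} = k\rbrac{2 + \frac{C''\Delta}{\sqrt{\log f}}},
\]
where $k = \chi(G)$ and $C'' = \sqrt{2C'}$. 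Absorbing the constant ``$2$'' (valid when $\Delta/\sqrt{\log f}$ is bounded below, and otherwise the statement is trivial since $\tau_2(G) \le \ceilbrac{(2+\sqrt2)\Delta}$ always) gives \eqref{eqn:smallchicor} with an absolute constant $C$.

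The main obstacle is making the local-sparsity coloring bound apply cleanly. One has to be careful that the hypothesis controls edges \emph{within} the $G_i$-neighborhood of $u$ — the pairs $v,w$ counted are at mutual distance $2$ in $G$, hence adjacent in $G_i$, and both at distance $2$ from $u$, hence $G_i$-neighbors of $u$ — so the count $\Delta^4/f$ is indeed an upper bound on $e_{G_i}(N_{G_i}(u))$; this identification is the crux and should be verified explicitly. A secondary technical point is that the cited theorems are usually stated for triangle-free graphs or phrased with the maximum number of edges in a neighborhood as a fraction of $\binom{D}{2}$ rather than $D^2$; one should quote a version (e.g.\ Molloy--Reed, \emph{Graph Colouring and the Probabilistic Method}) robust enough to give $\chi = O(D/\log s)$ under ``every neighborhood has at most $D^2/s$ edges'' with absolute implied constants, and check the edge cases where $f$ is small (say $f \le 2$, or $\log f$ comparable to a constant) separately, falling back on $\chi(G_i) \le \Delta^2$.
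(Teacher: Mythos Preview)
Your proposal is correct and follows essentially the same route as the paper: bound $\Delta(G_i)\le \Delta^2$, observe that the hypothesis exactly says $e_{G_i}(N_{G_i}(u))\le \Delta^4/f$, apply the Alon--Krivelevich--Sudakov locally-sparse coloring bound (Theorem~\ref{thm:AKS} in the paper) to get $\chi(G_i)=O(\Delta^2/\log f)$, and plug into Theorem~\ref{thm:smallchi1}. The paper is in fact slightly terser than you about the ceiling and the small-$f$ edge cases, so your extra care there is not a deviation but an improvement in rigor.
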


\subsection{Cartesian powers}

Fonger, Goss, Phillips and Segroves \cite{FGPS} studied $\tau_2(Q_n)$, where $Q_n = K_2^n$ is the Cartesian product $K_2 \times \cdots \times K_2$. For $n\le 3$ and proved that $7 \le \tau_2(Q_4) \le 8$. For all $n$ they proved that $\tau_2(Q_n) \le 2^{n-1}+2.$  Theorem \ref{thm:CKKDelta} gives $\tau_2(Q_n) \le 2 \lceil \sqrt 2 n \rceil$, which is a vast improvement for large $n$. Our next result provides a further improvement for large $n$, and generalizes $Q_n$ to any Cartesian power of a fixed graph.

\begin{theorem}\label{thm:cube}
    There is an absolute constant $C$ such that we have the following. Fix a graph $G$ with chromatic number $\chi(G)=k$. Let $G^b$ be the Cartesian product $G \times G \times \dots \times G$ of $b$ copies of $G$. Then  
    \[
    \tau_2(G^b) \le \frac{Ckb}{ \sqrt{\log b}}
    \]
\end{theorem}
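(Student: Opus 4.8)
The plan is to deduce Theorem \ref{thm:cube} as an application of Theorem \ref{thm:smallchi2}. First I would record two structural facts about $G^b$. Since $\chi(G)=k$, taking the product of proper $k$-colorings on each coordinate (reading the color of a vertex as the sum mod $k$, or equivalently the tuple of colors) gives a proper $k$-coloring of $G^b$, so $\chi(G^b) \le k$; in fact $\chi(G^b)=k$ exactly, but only the upper bound is needed. Second, I need a bound on $\Delta(G^b)$: if $\Delta(G)=\D_G$ then $\Delta(G^b)=b\D_G$, which is $\Theta(b)$ since $G$ is fixed. So already $\Delta := \Delta(G^b) = \Theta(b)$ and $\chi(G^b) \le k$, and Theorem \ref{thm:smallchi2} will give $\tau_2(G^b) \le k\rbrac{1 + C\Delta/\sqrt{\log f}}$ for whatever $f$ we can certify.

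The crux is to verify the hypothesis of Theorem \ref{thm:smallchi2} with a large enough $f$ — specifically, I want to show that for every vertex $u$ of $G^b$, the number of pairs $v,w$ with $d(u,v)=d(u,w)=d(v,w)=2$ is at most $\Delta^4/f$ for some $f = b^{\Omega(1)}$ (say $f = \Theta(b)$, which already suffices since then $\Delta/\sqrt{\log f} = \Theta(b/\sqrt{\log b})$). To count such triples, I would classify a vertex $v$ at distance $2$ from $u$ by the set of coordinates in which $v$ differs from $u$: either $v$ differs from $u$ in two coordinates (by one edge-step in each), or $v$ differs in a single coordinate where $d_G$ between the two entries is exactly $2$. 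The key observation is that two vertices $v,w$ at distance $2$ from $u$ are themselves at distance $2$ only if the coordinate-sets on which they differ from $u$ overlap heavily — if $v$ moves in coordinates $\{i,j\}$ and $w$ moves in a disjoint pair $\{k,\ell\}$, then $d(v,w)=4$. So essentially $v$ and $w$ must ``interact'' in at most one or two coordinates, which pins down all but $O(1)$ of the coordinate-choices and forces the count of valid $(v,w)$ to be $O(\Delta^3)$ rather than the trivial $O(\Delta^4)$. That yields $f = \Omega(\Delta) = \Omega(b)$.

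I would organize the argument as: (1) state $\chi(G^b)\le k$ and $\Delta(G^b) = b\,\Delta(G) =: \D$; (2) fix $u$ and, by translating, assume $u$ is a fixed reference vertex; (3) split the count over the ``type'' of $v$ (two coordinates at distance $1$, or one coordinate at distance $2$) and likewise for $w$, and in each of the $O(1)$ cases show the number of pairs $(v,w)$ satisfying the three mutual distance-$2$ constraints is $O(\D^3)$, using that the differing-coordinate sets of $v$ and $w$ can be ``far apart'' in at most boundedly many coordinates — here the constant in $O(\cdot)$ depends only on $G$; (4) conclude the hypothesis holds with $f = c'\D$ for a constant $c' = c'(G) > 0$, hence with $\log f = \Theta(\log b)$; (5) plug into Theorem \ref{thm:smallchi2} to get $\tau_2(G^b) \le k\rbrac{1 + C\D/\sqrt{\log f}} = O\rbrac{k b/\sqrt{\log b}}$, absorbing all $G$-dependent constants into the absolute constant after noting $k$ and $\Delta(G)$ are functions of the fixed graph $G$ (so the final bound with an absolute $C$ is legitimate once we allow the $Ck$ factor to carry the $\Delta(G)$ dependence — more carefully, $\D = \Delta(G) \cdot b$ and $k \ge \Delta(G)^{?}$ need not hold, so I would instead keep the bound as $O(kb/\sqrt{\log b})$ with the implied constant depending on $\Delta(G)/k$, and remark that $\Delta(G) \le$ (number of vertices of $G$) makes this a constant).

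The main obstacle I expect is step (3): getting the pair-count down to $O(\D^3)$ cleanly, since one must carefully handle the interplay between the two types of distance-$2$ vertices and make sure that the ``distance in $G$ equals $2$ within one coordinate'' case does not blow up — a single coordinate of $G$ can have up to $\Delta(G)^2$ vertices at distance $2$, which is a constant but one that enters the bound, so the bookkeeping of $G$-dependent constants versus the absolute constant $C$ in the statement needs a careful final paragraph.
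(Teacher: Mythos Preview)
Your proposal is correct and follows the same route as the paper: apply Theorem~\ref{thm:smallchi2} using $\chi(G^b)=k$, $\Delta(G^b)=\Theta(b)$, and the observation that mutually-distance-$2$ pairs $(v,w)$ around a fixed $u$ number $O(b^3)=O(\Delta^4/b)$ (since their deviation-coordinate sets cannot be disjoint), so $f=\Theta(b)$ suffices. Your case analysis distinguishing the two kinds of distance-$2$ vertices and your concern about whether $C$ is truly absolute versus depending on $\Delta(G)$ are both more careful than the paper, which simply writes $d=\Delta(G^b)=\Theta(b)$ and absorbs the $G$-dependence into the constant without comment.
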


Our next theorem is about $t$-tone coloring the Cartesian power of a clique. 

\begin{theorem}\label{thm:cliqueproduct}
   Fix integers $b, t \ge 1$ and a real number $\d>0$. For all sufficiently large $n$ (depending on $b, t, \d$)  we have that
    \begin{equation}\label{eqn:cliqueproduct}
        \tau_t(K_n^b)\leq (1+\d)tn.
    \end{equation}
\end{theorem}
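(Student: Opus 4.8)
The plan is to assemble $f$ from $t$ independent \emph{levels}: at level $\ell\in[t]$ each vertex $v$ receives a single color $c_\ell(v)$ from a palette of size roughly $n$, and we then set $f(v)=\{(c_\ell(v),\ell):\ell\in[t]\}$, drawing from the color set $\mathbb{F}_q\times[t]$ for a suitable $q$. With this format $f(u)\cap f(v)$ is exactly the set of levels on which $u$ and $v$ agree, so producing a valid $t$-tone coloring reduces to the following requirement: for every pair of vertices $u,v$ of $K_n^b$ at Hamming distance $d=d(u,v)\le t$, at most $d-1$ of the maps $c_1,\dots,c_t$ agree on $\{u,v\}$. Pairs at distance $d>t$ need no attention, since they can agree on at most $t<d$ levels.

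To build such maps I would first pick a prime power $q$ with $n\le q\le(1+\d)n$; such a $q$ exists once $n$ is large (for instance there is a prime in $[n,(1+\d)n]$ for large $n$, by the prime number theorem), and then $q\ge b+t$ automatically since $b,t$ are fixed. Embed $[n]$ into $\mathbb{F}_q$ and view $V(K_n^b)\subseteq\mathbb{F}_q^b$. Fix $b+t$ distinct elements $y_1,\dots,y_b,x_1,\dots,x_t\in\mathbb{F}_q$ and let $A=(\alpha_{\ell,i})_{\ell\in[t],\,i\in[b]}$ be the associated $t\times b$ Cauchy matrix, $\alpha_{\ell,i}=(x_\ell-y_i)^{-1}$. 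Define $c_\ell(v)=\sum_{i=1}^b\alpha_{\ell,i}v_i\in\mathbb{F}_q$, and use color set $\mathbb{F}_q\times[t]$, which has $qt\le(1+\d)tn$ colors.

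To verify validity, suppose $u$ and $v$ differ precisely on a coordinate set $D$ with $|D|=d\le t$, and suppose toward a contradiction that they agree on $d$ distinct levels $\ell_1,\dots,\ell_d$. Then $\sum_{i\in D}\alpha_{\ell_k,i}(u_i-v_i)=0$ for each $k\in[d]$, so the vector $(u_i-v_i)_{i\in D}$, all of whose entries are nonzero, lies in the kernel of the $d\times d$ matrix $(\alpha_{\ell_k,i})_{k\in[d],\,i\in D}$. But this matrix is a square submatrix of a Cauchy matrix, hence is itself a Cauchy matrix and thus invertible, so its kernel is trivial --- a contradiction. Therefore $u$ and $v$ agree on at most $d-1$ levels, i.e.\ $|f(u)\cap f(v)|\le d-1<d$, which is what we need.

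I do not expect a serious obstacle. The substance lies in choosing the level-based format for $f$ and recognizing that the defining property of Cauchy matrices --- every square submatrix is invertible (the MDS property) --- is exactly what converts ``$u$ and $v$ differ in few coordinates'' into ``$c_\ell(u)$ and $c_\ell(v)$ agree on few levels''. The remaining points are routine: the $d=1$ constraint forces each $\alpha_{\ell,i}\ne0$, which holds since $x_\ell\ne y_i$; the case $b=1$ merely recovers $\tau_t(K_n)\le tn$ up to the rounding; and one needs the elementary estimate supplying a prime power $q$ in $[n,(1+\d)n]$ for large $n$.
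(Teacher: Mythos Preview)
Your argument is correct and takes a genuinely different route from the paper. The paper encodes the problem as a hypergraph matching problem and invokes the Delcourt--Postle theorem (Theorem~\ref{thm:DP}) to find an $X$-perfect matching, which is non-constructive and relies on the same heavy machinery used for the tree result. Your proof is instead an explicit algebraic construction: you build $t$ linear forms over $\mathbb{F}_q$ whose coefficient matrix is Cauchy, so that any $d\times d$ minor is nonsingular, which is exactly the MDS property needed to bound the number of agreeing levels by the Hamming distance minus one. This is shorter and more elementary, and it is really a higher-dimensional version of the MOLS construction the paper uses for $b=2$ in Theorem~\ref{TOLS} (Cauchy matrices are one of the standard ways to produce $q-1$ MOLS of order $q$). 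A pleasant bonus of your approach is that when $n$ itself is a prime power with $n\ge b+t$ you may take $q=n$ and obtain $\tau_t(K_n^b)\le tn$ exactly, with no $\d$-slack; the paper's hypergraph-matching proof does not yield this. What the paper's method buys, in exchange, is uniformity: the same matching framework handles both trees and clique powers, whereas your linear-algebraic trick is tailored to the product-of-cliques structure.
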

Note that if $n = \max\{n_1, \ldots, n_b\}$ then we have
\[
K_n \subseteq K_{n_1} \times \cdots \times K_{n_b} \subseteq K_n^b
\]
and so Theorem \ref{thm:cliqueproduct} implies that for large $n$ we have
\[tn = \tau_t(K_n) \le \tau_t(K_{n_1} \times \cdots \times K_{n_b}) \le \tau_t(K_n^b) \le (1+\delta)tn.\]

 A Latin square is an $n\times n$ matrix such that every entry is an element from $[n]$ and every $i\in[n]$ appears in each row and column exactly once. Consider two Latin squares $A$ and $B$ where entry $(i,j)$ is $a_{i,j}$ and $b_{i,j}$ for $A$ and $B$ respectively. We call these Latin squares \textit{ orthogonal} if 
for every $(i,j),(i',j')\in[n]^2$ we have that $(a_{i,j},b_{i,j})=(a_{i',j'},b_{i',j'})$ if and only if $(i, j) = (i', j')$. A collection of Latin squares is called \textit{mutually orthogonal} if all pairs are orthogonal. We use the initialism MOLS for mutually orthogonal Latin squares. Bickle \cite{Bickle1} used orthogonal Latin squares to prove that $\tau_2(K_n^2) = 2n$ for all $n \ge 3.$ Our next result uses similar ideas to obtain $t$-tone colorings of $K_n^2$ for general $t$.\\

\begin{theorem} \label{TOLS}
    If there exists a collection of $n \times n$ MOLS of cardinality $t$  then $\tau_t(K_n^2)=tn$.
\end{theorem}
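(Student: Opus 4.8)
The plan is to prove the two bounds $tn\le\tau_t(K_n^2)\le tn$ separately, with the lower bound essentially free and the upper bound a direct construction from the hypothesized MOLS.

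For the lower bound I would note that a single ``row'' $\{(i,j):j\in[n]\}$ of $K_n^2$ induces a clique on $n$ vertices, that $\tau_t$ is monotone under taking subgraphs (distances only increase when passing to a subgraph, so any valid $t$-tone coloring of the ambient graph restricts to a valid one of the subgraph — the same observation used in the introduction for $S_\D\subseteq T\subseteq T_\D$), and that in a clique $K_n$ every pair is adjacent, forcing pairwise disjoint color sets and hence $\tau_t(K_n)=tn$. Thus $\tau_t(K_n^2)\ge\tau_t(K_n)=tn$.

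For the upper bound, let $L^{(1)},\dots,L^{(t)}$ be the given collection of $n\times n$ MOLS, and write $L^{(k)}_{i,j}\in[n]$ for the $(i,j)$ entry of $L^{(k)}$. I would take the color set $C=[t]\times[n]$, so $|C|=tn$, and color the vertex $(i,j)$ of $K_n^2$ by
\[
f(i,j)=\bigl\{\,(k,L^{(k)}_{i,j}) : k\in[t]\,\bigr\},
\]
a set of exactly $t$ colors containing one color from each ``layer'' $\{k\}\times[n]$. It then remains to verify the $t$-tone condition, and since $K_n^2$ has diameter $2$ this only involves pairs at distance $1$ and $2$. If $(i,j)$ and $(i',j')$ are adjacent they agree in exactly one coordinate, hence for every $k$ they lie in a common row or common column of $L^{(k)}$, and the Latin property (distinct entries in each row and column) gives $L^{(k)}_{i,j}\ne L^{(k)}_{i',j'}$; since colors in distinct layers are automatically unequal, $f(i,j)\cap f(i',j')=\emptyset$, as required at distance $1$. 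If instead $(i,j)$ and $(i',j')$ are at distance $2$ they differ in both coordinates; were they to share two colors, those colors would have to lie in two distinct layers $k\ne k'$ (two colors of the same layer coincide only if their second coordinates agree), giving $(L^{(k)}_{i,j},L^{(k')}_{i,j})=(L^{(k)}_{i',j'},L^{(k')}_{i',j'})$, and orthogonality of $L^{(k)}$ and $L^{(k')}$ then forces $(i,j)=(i',j')$, a contradiction. Hence a distance-$2$ pair shares at most one color, $f$ is a valid $t$-tone coloring, and $\tau_t(K_n^2)\le tn$.

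I do not expect a genuine obstacle here: the proof is a clean translation of the definitions of Cartesian product, $t$-tone coloring, and (mutual) orthogonality. The only point needing care is recognizing that, because each layer $\{k\}\times[n]$ contributes exactly one color per vertex, a distance-$2$ pair that shares two colors must share them \emph{across} two different layers — and that is precisely the configuration ruled out by mutual orthogonality, whereas the weaker Latin-square property alone suffices only for the distance-$1$ (adjacency) constraints.
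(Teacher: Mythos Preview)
Your proof is correct and follows essentially the same approach as the paper: the same layered coloring from the $t$ MOLS, with the Latin property handling distance-$1$ pairs and orthogonality handling distance-$2$ pairs. If anything you are slightly more explicit than the paper, spelling out the lower bound via $K_n\subseteq K_n^2$ and noting why two shared colors at distance $2$ must lie in distinct layers.
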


Latin squares are very well studied, and there are some known results on MOLS. For example, we can use some old results of Beth \cite{MOLS} and Macneish \cite{McNT} to obtain the following corollary of Theorem \ref{TOLS}.

\begin{corollary}\label{cor:MOLS}
    We have $\tau_t(K_n^2)=tn$ in either of the following two cases:
    \begin{enumerate}[label=(\alph*)]
        \item\label{case:beth} $t \le n^{5/74}$,
        \item\label{case:macneish} $n$ is a prime power and $t \le n-1$.
    \end{enumerate}
\end{corollary}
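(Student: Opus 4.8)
The plan is to reduce everything to Theorem \ref{TOLS} together with classical lower bounds on $N(n)$, the largest size of a family of mutually orthogonal $n \times n$ Latin squares. The lower bound $\tau_t(K_n^2) \ge tn$ comes for free: it follows from $K_n \subseteq K_n^2$ and $\tau_t(K_n) = tn$, exactly as noted after Theorem \ref{thm:cliqueproduct}. So in both cases it suffices to produce a family of $t$ MOLS of order $n$ --- i.e.\ to verify $N(n) \ge t$ --- and then invoke Theorem \ref{TOLS} for the matching upper bound $\tau_t(K_n^2) \le tn$.

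For case \ref{case:macneish}, if $n = q$ is a prime power then the field construction of MacNeish \cite{McNT} over $\mathbb{F}_q$ produces a complete set of $q-1$ MOLS of order $q$, so $N(n) = q - 1 \ge t$ and Theorem \ref{TOLS} gives equality. For case \ref{case:beth}, first dispose of $t = 1$: a single Latin square of order $n$ is (vacuously) a family of MOLS of cardinality $1$, so Theorem \ref{TOLS} already yields $\tau_1(K_n^2) = n$ (alternatively, use the coloring $(i,j) \mapsto i + j \bmod n$). So assume $t \ge 2$; then the hypothesis $t \le n^{5/74}$ forces $n \ge t^{74/5} \ge 2^{74/5}$, so $n$ is large. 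Beth's sieve bound \cite{MOLS} states that $N(n) > n^{1/14.8} = n^{5/74}$ for all $n$ beyond an absolute threshold, and $n \ge 2^{74/5}$ clears it, whence $N(n) > n^{5/74} \ge t$ and we are done as before.

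The one point that needs a little care is whether Beth's asymptotic bound really covers every order $n$ that can occur in case \ref{case:beth}, and this is where I would be most cautious. It is not an obstacle: if $n$ happens to lie below the threshold of \cite{MOLS}, then (since $t \ge 2$ already pins $n$ to a bounded interval) the quantity $n^{5/74}$, hence $t$, is bounded, and for each fixed small value of $t$ the inequality $N(n) \ge t$ fails for only finitely many $n$ --- in particular $N(n) \ge 2$ for every $n \notin \{1,2,6\}$ --- so the remaining finitely many orders can be checked directly from known constructions. Thus both cases collapse to a single application of Theorem \ref{TOLS}.
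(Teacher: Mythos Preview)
Your proof is correct and follows essentially the same route as the paper: reduce to Theorem \ref{TOLS} via the classical lower bounds on $N(n)$ due to Beth and MacNeish. The paper simply quotes Beth's bound as $N(n)\ge n^{5/74}$ without discussing any threshold, so your extra care on that point (and the separate handling of $t=1$) is more than the paper itself provides, but the substance of the argument is identical.
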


\subsection{$t$-tone coloring a fixed graph $G$ for large $t$}

Much of the previous work on $t$-tone coloring is for fixed $t$ and considering various graphs $G$. Our first result considers the ``opposite'' situation, where we fix $G$ and consider large values of $t$. Note that for any $t, G$ we always have $\tau_t(G) \le t|V(G)|$ since we can label all vertices with disjoint sets of colors. We will now see that for a fixed connected graph $G$ and large $t$ this upper bound is almost correct since $\tau_t(G) = tn -\kappa_G$ where $\kappa_G$ depends only on $G$, and in fact we determine this $\kappa_G$ precisely.
\begin{theorem}\label{thm:larget}
    Let $G$ be any connected graph with $n$ vertices. Then 
    \begin{equation}\label{eqn:larget}
        \tau_t(G)\ge tn -\sum_{u,v\in V(G)} \Big(d(u,v)-1 \Big),
    \end{equation}
    where the sum is over all (unordered) pairs of distinct vertices $u, v$.
    Furthermore if $G$ has diameter $D$ and $t\geq (n-1)(D-1)$ then we have equality in \eqref{eqn:larget}.
\end{theorem}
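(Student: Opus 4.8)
The plan is to establish the inequality \eqref{eqn:larget} by a double-counting argument, and then, when $t$ is large enough, to exhibit a $t$-tone coloring that attains it.

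\emph{Lower bound.} Take any $t$-tone coloring $\ell$ of $G$ with color set $C$, and discard unused colors so that each $c\in C$ lies in $\ell(v)$ for at least one $v$. Let $m_c=|\{v:c\in\ell(v)\}|\ge 1$, so that $\sum_{c\in C}m_c=tn$. Counting incidences $(c,\{u,v\})$ with $c\in\ell(u)\cap\ell(v)$ in two ways, and using the defining property $|\ell(u)\cap\ell(v)|\le d(u,v)-1$, I get
\[
\sum_{c\in C}\binom{m_c}{2}=\sum_{u\ne v}|\ell(u)\cap\ell(v)|\le\sum_{u\ne v}\big(d(u,v)-1\big),
\]
the outer sums running over unordered pairs of distinct vertices. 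Since $(m_c-1)(m_c-2)\ge 0$ for every integer $m_c$, we have $\binom{m_c}{2}\ge m_c-1$, and therefore $tn-|C|=\sum_c(m_c-1)\le\sum_c\binom{m_c}{2}\le\sum_{u\ne v}(d(u,v)-1)$, which rearranges to \eqref{eqn:larget}.

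\emph{Equality for large $t$.} Now suppose $t\ge(n-1)(D-1)$; I would build $\ell$ directly. For each unordered pair $\{u,v\}$ of distinct vertices, create $d(u,v)-1$ brand-new colors, all distinct across different pairs, and place them in $\ell(u)$ and $\ell(v)$ only. After this step vertex $u$ holds $\sum_{w\ne u}(d(u,w)-1)\le(n-1)(D-1)\le t$ colors (every distance is at most $D$), so I top it up with $t-\sum_{w\ne u}(d(u,w)-1)$ fresh private colors to make $|\ell(u)|=t$. This $\ell$ is a valid $t$-tone coloring: for distinct $u,v$, the colors common to both are exactly the $d(u,v)-1$ colors of the pair $\{u,v\}$, since a private color or a color of another pair lies in at most one of $\ell(u),\ell(v)$, and $d(u,v)-1<d(u,v)$. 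Counting, the shared colors number $\sum_{u\ne v}(d(u,v)-1)$ while the private colors number $\sum_u\big(t-\sum_{w\ne u}(d(u,w)-1)\big)=tn-2\sum_{u\ne v}(d(u,v)-1)$, so $|C|=tn-\sum_{u\ne v}(d(u,v)-1)$, matching the lower bound.

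\emph{Where the difficulty lies.} The lower bound poses no real obstacle; the one point needing care is the construction, namely that the forced shared colors never exceed a vertex's budget of $t$, which is precisely the role of the hypothesis $t\ge(n-1)(D-1)$ (together with connectedness, which guarantees all pairwise distances are finite and at most $D$). I would also sanity-check the degenerate pieces: pairs at distance $1$ simply contribute an empty set of shared colors, and if $n=1$ both sides of \eqref{eqn:larget} equal $t$.
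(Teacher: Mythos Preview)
Your proof is correct. Both halves match the paper's argument in spirit, with modest differences in packaging.

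For the lower bound, the paper truncates inclusion--exclusion after two terms (Bonferroni), obtaining $|C|\ge tn-\sum_{u,v}|\ell(u)\cap\ell(v)|$. Your double-counting via $m_c$ and the inequality $\binom{m_c}{2}\ge m_c-1$ is exactly the same inequality unpacked at the level of individual colors; the two presentations are equivalent.

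For the construction, the paper colors vertices one at a time: when processing $v_i$, it borrows $d(v_i,w)-1$ currently private colors from each already-colored $w$, and must check at each step that $w$ still has enough private colors left (this is where the paper uses $t-(D-1)(i-2)\ge D-1$). Your construction is the same coloring described non-sequentially: you simply allocate $d(u,v)-1$ dedicated shared colors to each pair up front and then pad with private colors. Your version is cleaner, since the single verification $\sum_{w\ne u}(d(u,w)-1)\le(n-1)(D-1)\le t$ replaces the step-by-step bookkeeping; the paper's greedy phrasing buys nothing extra here.
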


\subsection{Organization of paper}
In Section \ref{sec:tools} we discuss some terminology and tools we plan to use. In Section \ref{sec:trees} we prove Theorem \ref{thm:trees} about trees, Theorem \ref{thm:RG} about sparse random graphs, and Theorem \ref{thm:multipartite} about complete multipartite graphs. In Section \ref{sec:smallchi} we prove Theorems \ref{thm:smallchi1} and \ref{thm:smallchi2} bounding $\tau_2(G)$ in terms of $\chi(G)$. In Section \ref{sec:powers} we prove Theorems \ref{thm:cube}, \ref{thm:cliqueproduct} and \ref{TOLS} about Cartesian powers of graphs, as well as Corollary \ref{cor:MOLS}. In Section \ref{sec:larget} we prove Theorem \ref{thm:larget} about $\tau_t(G)$ where $G$ is fixed and $t$ is large. In Section \ref{sec:treeconj} we give some evidence for Conjecture \ref{conj:tree} by working out some small cases. Concluding remarks are in Section \ref{sec:conclusion}.

\section{Terminology and tools}\label{sec:tools}

In this section we introduce some terminology and tools.\\ 

Let $\mc{H}$ be a hypergraph. If all the edges of $\mc{H}$ have size $k$ we say $\mc{H}$ is \emph{$k$-uniform}, and if all edges have size at most $k$ we say $\mc{H}$ is \emph{$k$-bounded}. A \emph{matching} in $\mc{H}$ is a set of pairwise disjoint edges. We denote by $\mc{H}^{(j)}$ the hypergraph on the same vertex set as $\mc{H}$ and with all the edges of size $j$.
For a hypergraph $\mc{H}$ and a vertex $v \in V(\mc{H})$, 
we denote by $d_\mc{H}(v)$ the \emph{degree} of $v$; 
that is, the number of edges containing $v$.
We use $\Delta(\mc{H})=\max_{u\in V(\mc{H})}d_\mc{H}(u)$ to denote the \emph{maximum degree} of $\mc{H}$.
Similarly, we define the \emph{minimum degree} $\delta(\mc{H})$ of $\mc{H}$.
For $j\geq 2$, we denote by $\Delta_j(\mc{H})$ the maximum number of edges that contain a particular set of $j$ vertices.\\

For a hypergraph $\mc{H}$ and a partition $V(\mc{H}) = X \cup Y$, we say that $\mc{H}$ is \emph{bipartite} and has \emph{bipartition} $X \cup Y$ if every edge of $\mc{H}$ contains exactly one vertex of $X$. In this case we say that a matching $\mc{M}$ in $\mc{H}$ is $X$-perfect if every vertex of $X$ is contained in some edge in $\mc{M}$. 

\begin{theorem}[{Delcourt and Postle \cite[Theorem~1.16]{DP22}}]\label{thm:DP}
For all  $k, \ell \ge 2$ and real $\eps \in  (0, 1)$, there exist an integer $d_0 > 0$ and real $\alpha > 0$ such that following holds for all $d \ge d_0$. Let $\mc{H}$ be a bipartite $k$-bounded hypergraph  with bipartition $X \cup Y$ such that
\begin{enumerate}[label=(H\arabic*)]
\item every vertex in $X$ has degree at least $(1+d^{-\alpha})d$ and every vertex in $Y$ has degree at most $d$, and \label{cond:h1}
\item $\Delta_2(\mc{H}) \le d^{1-\eps}$.\label{cond:h2}
\end{enumerate}

Then there exists an $X$-perfect 
matching of $\mc{H}$.
\end{theorem}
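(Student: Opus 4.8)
The plan is to combine the semi-random method (the Rödl nibble) with absorption. The starting observation is that \ref{cond:h1}--\ref{cond:h2} guarantee a \emph{fractional} $X$-perfect matching with room to spare: giving each edge weight $\frac{1}{(1+d^{-\alpha})d}$ assigns every $x\in X$ total weight at least $1$, while every $y\in Y$ receives total weight at most $\frac{1}{1+d^{-\alpha}} = 1-\Theta(d^{-\alpha})$. The whole proof amounts to rounding this fractional solution to an \emph{integral} matching that still covers all of $X$; the $Y$-side slack $\Theta(d^{-\alpha})$ is exactly what makes it possible to cover \emph{every} vertex of $X$ rather than a $(1-o(1))$-fraction, and the codegree bound \ref{cond:h2} is the pseudorandomness needed to control the rounding. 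Since we only need \emph{some} $\alpha>0$, we are free to take $\alpha$ small, i.e.\ to assume a comfortable (polynomial) $Y$-surplus.

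First I would hold out a small random absorbing structure $\mc{A}$. Using \ref{cond:h1}--\ref{cond:h2}, for every $x\in X$ one can exhibit many short ``rerouting gadgets'': bounded configurations of edges that allow $x$ to be inserted into a partial $X$-perfect matching at the cost of re-matching a bounded number of other $X$-vertices. Selecting gadgets (or their edges) independently with a small probability $p = d^{-\beta}$, where $\alpha \ll \beta \ll \eps$, and applying concentration, one obtains with positive probability an $\mc{A}$ that is sparse enough that with $\mc{A}$ held out the hypotheses \ref{cond:h1}--\ref{cond:h2} still hold on the remainder with the $X$-surplus essentially intact, and that is moreover \emph{robust}: it can absorb any leftover set $X_0 \subseteq X$ of size up to $\mu |X|$ for a suitable constant $\mu>0$.

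Next I would run the nibble on the remainder $\mc{H}'$ for $O(\log(1/\mu)) = O(1)$ rounds. In each round select every surviving edge independently with probability $\gamma/d_i$, where $d_i$ is the current maximum $Y$-degree, and add the selected edges that happen to form a matching. Standard concentration (Talagrand's inequality together with the Pippenger--Spencer/Kahn trajectory analysis, which needs precisely \ref{cond:h2}) shows that the degree hypotheses persist up to $(1+o(1))$ factors — in particular the multiplicative $X$-surplus does not erode, since all degrees decay by essentially the same factor each round — while the uncovered part of $X$ shrinks geometrically. After $O(1)$ rounds the uncovered set $X_0$ satisfies $|X_0| \le \mu|X|$, and the robustness of $\mc{A}$ then lets us complete the matching to an $X$-perfect one.

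The main obstacle is the absorber in this level of generality: $\mc{H}$ is only $k$-bounded rather than uniform, the matching requirement is one-sided, and one needs $\mc{A}$ to swallow a \emph{linear} amount of leftover (a naive absorber copes only with polylogarithmically many stray vertices and typically demands a different, larger kind of slack). This forces an \emph{iterated} construction — building absorbers that are themselves absorbed — which is the technical core of Delcourt and Postle's ``refined absorption'' machinery. An alternative that bypasses absorption is a self-correcting differential-equation analysis of one long nibble, tracking the $Y$-slack as a quantity that provably stays above the critical threshold throughout; that route also works but requires carefully engineered correction terms.
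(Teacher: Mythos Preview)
The paper does not prove this theorem at all: it is quoted verbatim from Delcourt and Postle \cite[Theorem~1.16]{DP22} as a black-box tool, and the surrounding discussion explicitly says the authors are using it out of convenience rather than developing the matching machinery themselves. So there is no ``paper's own proof'' to compare your proposal against.

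That said, your sketch is a reasonable high-level account of how results of this type are established (nibble plus an absorber to clean up the leftover), and you correctly identify that the one-sided slack in \ref{cond:h1} is what upgrades an almost-perfect matching to an $X$-perfect one, and that \ref{cond:h2} is the codegree input the nibble needs. You are also right that the delicate part is building an absorber that handles a linear leftover in this generality; Delcourt and Postle's actual argument in \cite{DP22} is considerably more elaborate than a standard R\"odl nibble with a single absorber, since their framework is designed to find matchings avoiding arbitrary ``forbidden submatchings,'' of which the $X$-perfect-matching statement here is a very special case. If you want to write a self-contained proof of just this special case, a cleaner route than iterated absorption is probably to cite one of the earlier almost-perfect-matching theorems (Pippenger--Spencer, Kahn, or Frankl--R\"odl) and then do a single short greedy/absorption step exploiting the $X$-surplus; but for the purposes of this paper none of that is needed, since the theorem is simply imported.
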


There are several results we could use in place of Theorem \ref{thm:DP}, and our choice to use this version is made out of convenience and familiarity. For our purposes, it suffices to know that ``nice'' hypergraphs have ``large'' matchings. Various versions of such a statement have been known since the 1980's starting with Pippinger and Spencer \cite{PS}. The work of Delcourt and Postle \cite{DP22} is actually much more general and has the power to find a matching which satisfies additional constraints which we will not use here. Mostly, we chose to use their result because it is convenient for us to obtain $X$-perfect matchings.\\

We will also use the following to prove Theorem \ref{thm:smallchi2}.
\begin{theorem}[Alon, Krivelevich and Sudakov \cite{AKS99}]\label{thm:AKS}
    There exists an absolute constant $c$ such that the following holds. Suppose $G$ is a graph with maximum degree $d$ such that for every vertex $v$, its neighborhood $N(v)$ contains at most $d^2/f$ edges. Then $\chi(G) \le cd/ \log f$.
\end{theorem}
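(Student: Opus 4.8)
The plan is to prove this by the semi-random (``nibble'') colouring method, engineered so that the logarithmic saving is produced directly by the sparse-neighbourhood hypothesis; this is the kind of argument whose prototype is the Pippenger--Spencer-style analysis of random greedy processes \cite{PS}. First I would dispose of easy cases: we may assume $d$ is larger than a suitable absolute constant (otherwise $\chi(G)\le d+1$ already gives the bound after enlarging $c$), and we may assume $f$ lies in a reasonable range, say $C_0\le f\le d$ (if $f$ exceeds $d$ the neighbourhoods are so sparse that one may either replace $f$ by $d$ and appeal to the triangle-free-type conclusion, or run the same argument with the stronger input). Fix a palette of $k:=\lceil Cd/\log f\rceil$ colours and, for each vertex $v$, an initial list $L(v)=[k]$. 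We build a proper partial colouring over $T=\Theta(\log d)$ rounds, maintaining for each still-uncoloured vertex $v$ the list $L(v)$ of colours not yet used on a coloured neighbour of $v$, and tracking two parameters: the largest current list size $\ell_i$ and the largest degree $b_i$ in the graph induced by the uncoloured vertices, with $\ell_0=k$ and $b_0\le d$.

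In each round every uncoloured vertex independently picks a uniformly random colour from its current list; it retains that colour if no neighbour selected the same colour, and otherwise remains uncoloured. One then deletes from each surviving list all colours retained by coloured neighbours and updates $\ell_i,b_i$. The crucial asymmetry is that a colour disappears from $L(v)$ only once, regardless of how many neighbours of $v$ retain it, whereas \emph{every} retained neighbour decreases the uncoloured degree of $v$. Since the neighbourhood of $v$ spans at most $d^2/f$ edges, at least a $(1-2/f)$-fraction of the pairs of neighbours of $v$ are non-adjacent and may safely retain a common colour, so many colours are ``reused'' among the neighbours of $v$; quantifying this shows that along the process the uncoloured degree of $v$ decays strictly faster than $L(v)$, the relative gap per round being of order $1/\log f$, so that after $\Theta(\log d)$ rounds one has $b_i<\ell_i/2$ for every uncoloured vertex while $\ell_i$ never drops below $\Omega(d/\log f)$. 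To make this rigorous one needs concentration: Talagrand's inequality for the list sizes and a bounded-differences (Azuma) argument for the uncoloured degrees, combined with a union bound over the $n$ vertices, or — to keep the statement genuinely independent of $n$ — the Lov\'asz Local Lemma applied to the distance-two dependency structure. Either way the expected trajectories of $\ell_i$ and $b_i$ are followed with high probability, so the process runs to completion.

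Once a round ends with $b_i\le \ell_i/2$ everywhere, every uncoloured vertex has strictly more available colours than uncoloured neighbours, so a single greedy pass (or one more application of the local lemma) completes the colouring without using any new colour. Adding up the per-round list losses shows the whole procedure uses at most $k=O(d/\log f)$ colours, which is the theorem after fixing $C$ appropriately.

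The main obstacle is the bookkeeping in the second paragraph. One must choose the relationship among $k$, $\ell_i$, and $b_i$ (and, if one runs a true nibble, the fraction of vertices activated per round) so that the sparse-neighbourhood hypothesis is genuinely converted into the $\Theta(1/\log f)$ relative gap between list decay and degree decay, and one must check that the first-order ``savings'' coming from non-adjacent pairs of neighbours are not swamped by the second-order collision terms; the concentration estimates must then be sharp enough (error probabilities smaller than the reciprocal of the relevant degree, for the local lemma version) to survive through all $\Theta(\log d)$ rounds. By comparison, the endgame and the reduction to the stated range of $f$ are routine.
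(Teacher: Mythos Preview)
The paper does not prove this theorem; it is quoted as a known result of Alon, Krivelevich and Sudakov \cite{AKS99} and used as a black box in the proof of Theorem~\ref{thm:smallchi2}. So there is no ``paper's own proof'' to compare against.

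That said, your sketch is essentially the approach taken in \cite{AKS99} itself: an iterated semi-random partial colouring in which one tracks, for each uncoloured vertex, the surviving list size versus the uncoloured degree, and exploits the sparse-neighbourhood hypothesis to show the degree decays faster than the list. Your identification of the mechanism (colours get reused among non-adjacent neighbours, so a single list deletion can account for many neighbour colourings) and of the tools (Talagrand/Azuma for concentration, the Local Lemma for the $n$-free version, greedy finish once lists exceed degrees) is correct in outline. The honest gap in your proposal is exactly the one you flag: the quantitative claim that the per-round relative gap between list decay and degree decay is $\Theta(1/\log f)$ is asserted rather than derived, and getting this recursion right (together with error terms small enough to survive $\Theta(\log d)$ rounds) is the entire content of the proof. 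As a plan this is fine; as a proof it would need those computations written out.
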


\section{Trees, sparse random graphs, and complete multipartite graphs}\label{sec:trees}

In this section we prove we prove Theorems \ref{thm:trees}, \ref{thm:RG} and \ref{thm:multipartite}.

\begin{proof}[Proof of Theorem \ref{thm:trees}]

    Fix $t \ge 2, \delta>0$. Now for sufficiently large $\D$, we will use Theorem \ref{thm:DP} to generate a $t$-tone  coloring of the infinite $\D$-regular tree $T_\D$ using $(1  + \delta) \sqrt{t(t-1)\D}$ colors. Choose a vertex $r\in T_\D$ and let $P_i$ be the set of vertices at distance $i$ from $r$, partitioning the vertex set into $P_0 \cup P_1 \cup \cdots.$ We will give $r$ (the only vertex in $P_0$) an arbitrary set of $t$ colors, and proceed to inductively color the vertices in each $P_i$.\\

    Suppose $i \ge 1$ and we have already colored the vertices in $P_0 \cup \ldots \cup P_{i-1}$ in a valid way (i.e. we have a $t$-tone coloring of $T_\D[P_0 \cup \ldots \cup P_{i-1}]$). We define a hypergraph $\mc{H}=(X\cup Y, E)$ where 
    \[
    X=P_i, \qquad Y=\bigcup_{j=1}^{\lfloor t/2 \rfloor}P_{i-j}\times\binom{C}{2j}.
    \]
    Now we describe the edge set $E$. Observe that a vertex $v\in P_i$ has a unique path back to $r$, and denote this path by $p_v$. This path includes exactly one vertex from each $P_k$ for $k<i$. For each vertex $v\in P_i$ and  set $S$ of $t$ colors, $\mc{H}$ will have an edge $e_{v, S}$. We will interpret this edge as assigning $v$ the color set $S$. We make the following exception: $\mc{H}$ will not have the edge $e_{v, S}$ if assigning $S$ to $v$ would create a violation of the $t$-tone coloring property (i.e. if there is some vertex $w \in P_0 \cup \ldots \cup P_{i-1}$ at distance say $j$ from $v$ such that $w$ shares $j$ colors with $S$). Otherwise we will set 
    \[
    e_{v, S}:=\{v\}\cup\cbrac{\{u\}\times\binom{S}{2d(u,v)}: u\in p_v}.
    \]

    We claim an $X$-perfect matching of $\mc{H}$ provides a valid coloring of $P_i$. In other words, it extends our previous $t$-tone coloring of $T_\D[P_0 \cup \ldots \cup P_{i-1}]$ to a $t$-tone coloring of $T_\D[P_0 \cup \ldots \cup P_{i}]$. Be definition, we do not include any edge in $\mc{H}$ that would cause us to color a vertex in $P_i$ that would conflict with a vertex in $P_0 \cup \ldots \cup P_{i-1}$. So if a conflict occurs it will occur between two vertices in $P_i$. Consider $u,v\in P_i$. There is a unique $uv$-path, and say that $j$ is the smallest index such that this $uv$-path contains a vertex  $w\in P_j$. This implies that $w \in p_u \cap p_v$, and that $d(u, v) = 2(i-j)$. If our color assignment creates a conflict between $u$ and $v$, then our matching in $\mc{H}$ has two edges $e_{u, S_u}, e_{v, S_v}$ such that there is a set of $2(i-j)$ colors $S' \subseteq S_u \cap S_v$ of size $2(i-j)$. But then $(w,S') \in e_{u, S_u}\cap e_{v, S_v}$, contradicting that we have a matching. Thus it suffices to show that $\mc{H}$ has an $X$-perfect matching. So we move on to check that $\mc{H}$ satisfies the conditions of Theorem \ref{thm:DP}. We let
    \[
    \eps=\frac 1t, \qquad d=(1+\delta)^{t-2}\left(\sqrt{t(t-1)}\right)^{t}\frac{\D^{t/2}}{t!}
    \]

We check Condition \ref{cond:h1}. Given a vertex $v \in X$, to complete an edge we choose a set $S$ of $t$ colors, but we must only count those sets $S$ that would not conflict with the previously colored vertices in $P_0 \cup \ldots \cup P_{i-1}$. We observe that for a vertex $v\in P_i$ for $u\in P_j$ such that $j<i$ we have that if $d(u,v)=k$ then every $\binom{\sigma(u)}{k}$ set is forbidden from between them and there are $\binom{|C|}{t-k}$ ways to complete the forbidden coloring acknowledging that we are double counting. We now wish to count the number of colored vertices of distance $j\le t$. We may go up $j$ times and find one vertex there, we can then go up $j-1$ times then down giving us $\D-1$ vertices and in general we can go up $j-k$ times then down $k$ times to see $(\D-1)^k$ when $k\le\lceil\frac{j}{2}\rceil-1$ as if $k=\frac{j}{2}$ we reach layer $P_i$ and otherwise do do not. Thus if $B$ is the set of colorings we don't allow for a fixed vertex $v$ we have that
\begin{align*}
|B|&\le\sum_{j=1}^{t}\sum_{k=0}^{\lceil j/2\rceil-1}(\D-1)^k\binom{C}{t-j}\binom{t}{j}\\
&\leq \sum_{j=1}^{t}\sum_{k=0}^{\lceil j/2\rceil-1} \D^k((1+\d)^2t(t-1)\D)^{(t-j)/2}\\
&\leq \sum_{j=1}^tO(\D^{\lceil j/2\rceil-1}\D^{(t-j)/2})\\
&= O(\D^{\frac{t-1}2})=o(d)
\end{align*}

The inequality on the first and second line are immediate. The inequality on the third line comes from each term being of a lesser power tending to infinity for $k<\lceil j/2\rceil$. The final line is immediate.
This means we can bound $\d(X)$ with
\begin{align}
    \binom{C}{t}-|B|&\geq \frac{(1+\d)^t\left[\sqrt{t(t-1)\D}-t\right]^{t}}{t!}-o(d) \\
    & \geq \frac{(1+\d)^{t-1}\left[\sqrt{t(t-1)\D}-t\right]^{t}}{t!}\\
    &=(1+\d)d\geq(1+d^{-\eps})d
\end{align}

The first line is easy.
The second line holds, as we remove $o(d)$ from the term while multiplying to change the constant factor to the term of higher order. The third line is immediate.\\

Now consider $y=(u,S)\in Y$ where $u\in P_j$ and $|S|=2(i-j)$. There are at most $\D^{i-j}$ possible vertices in $P_i$ that $y$ could be in an edge with, and at most $\binom{C}{t-2(i-j)}$ ways to complete a set of $t$ colors containing $S$. Thus the degree of $y$ is at most (the inequalities will be explained)

\begin{align*}
    \D^{i-j}\binom{C}{t-2(i-j)} &\le \D^{i-j}\frac{[(1+\delta)\sqrt{t(t-1)\Delta}]^{t-2(i-j)}}{[t-2(i-j)]!}\\
    & \le (1+\delta)^{t-2} \Delta^{t/2}\frac{[\sqrt{t(t-1)}]^{t-2(i-j)}}{[t-2(i-j)]!}\\
    & \le (1+\delta)^{t-2} \Delta^{t/2} \frac{[\sqrt{t(t-1)}]^{t}}{t!} = d.
\end{align*}
The inequality on the first line is easy, and the second line follows from simplifying the first line and using $i-j \ge 1$. The third line follows from multiplying the second line by \newline $\sqrt{t(t-1)}^{2(i-j)} / (t)_{2(i-j)} \ge 1$.\\

We now check Condition \ref{cond:h2}. We fix two vertices in $\mc{H}$ and bound their codegree. We consider cases based on whether our fixed vertices are in $X$ or $Y$. Of course, two vertices in $X$ cannot be in any edge together in a bipartite hypergraph. Next, 
 consider the codegree of a vertex $x \in X$ and $y \in Y$.
Our vertex in $x$ tells us the vertex in $P_i$ which we are coloring, and our vertex $y$ determines at least $2$ colors, leaving at most $t-2$ colors to be chosen. Thus, the codegree of $x$ and $y$ is at most 
\[
O\rbrac{|C|^{t-2}} = O\rbrac{\Delta^{\frac t2 - 1}} = O\rbrac{d^{1- \frac2t}} < d^{1-\eps}.
\]
\\
Finally, consider two vertices in $Y$, say $y_1=(u_1,S_1)$ and $y_2=(u_2,S_2)$. Let $|S_1|=2j_1$, $|S_2|=2j_2$. Note that if $j_1 = j_2$ then either $u_1$ and $u_2$ do not have any common descendants in $P_i$ so the codegree of $y_1, y_2$ is 0 or $u_1=u_2$. In the event that $u_1=u_2$ we have that $S_1, S_2$ must be distinct color sets, else $y_1, y_2$ are the same vertex. Thus there exists a color in $S_1$ not in $S_2$ and vice versa, where each contains $2j_1$ colors. Thus they together must have at least $2j_1+2$ colors. As there are at most $\D^j$ vertices $x\in P_i$ such that $u$ lies in $p_x$ and at most $t-2j-2$ colors we have left to choose we have that our codegree is at most 
\[
O(\D^{j}|C|^{t-2j-2})=O(\D^{\frac t2-1})=O(d^{1-\frac 2t})<d^{1-\eps}.
\]

So assume without loss of generality that $j_1 < j_2$. To determine an edge containing $y_1$ and $y_2$ we can first choose a descendant of $y_1$ in $P_i$ in at most $\Delta^{j_1}$ ways, and then choose a set of $t$ colors containing $S_2$ in at most $|C|^{t-2j_2}$ ways. Thus, the codegree of $y_1$ and $y_2$ is at most
\[
\Delta^{j_1} |C|^{t-2j_2} = O\rbrac{\Delta^{j_1 + \frac t2 - j_2}}= O\rbrac{\Delta^{\frac t2 - 1}} = O\rbrac{d^{1- \frac2t}} < d^{1-\eps}.
\]

Thus, Theorem \ref{thm:DP} gives us an $X$-perfect matching, which tells us how to color $P_i$. By induction we are able to color the entire tree $T_\D$, completing the proof of Theorem \ref{thm:trees}.
\end{proof}

Next we will prove Theorem \ref{thm:RG}. First we discuss the proof of Theorem \ref{thm:oRG} by Bal, the first author, Dudek and Frieze. Our proof of Theorem \ref{thm:RG} will use some facts that were established in their proof. \\

Let $G:=G(n, p)$ where $p=c/n$ for some constant $c>0$. Let $H$ be the set of vertices with degree at least $\log^{1/4}n$, along with the vertices that are distance at most $2t-2$ away from such a vertex. In \cite{BBDF} they show that w.h.p.~$G[H]$ is a forest. They proceed to use Theorem \ref{thm:CKKtrees} to obtain a $t$-tone coloring to $H$. They proceed to show that one can greedily color all other vertices (after first uncoloring some vertices of $H$ to be recolored greedily) without introducing any new colors. Basically, we are forced to use so many colors near the high-degree vertices in $H$ that coloring the lower-degree vertices becomes easy. It follows that $\tau_t(G) = \tau_t(G[H]) = \tau_t(T)$ where $T$ is the component of $G[H])$ maximizing $\tau_t(T)$. Of course, $\log^{1/4}(n)\le \Delta(T) \le \Delta(G)$.\\

\begin{proof}[Proof of Theorem \ref{thm:RG}]
    The lower bound follows from Observation \ref{obs:DeltaLB}. We proceed to the upper bound.
    By the above discussion, we have that w.h.p.~there exists a tree $T$ such that         
    \[
    \log^{1/4}(n)\leq\D(T)\leq\D(G) \qquad \text{and} \qquad 
    \tau_t(G)=\tau_t(T)
    \]
    \\
   Thus $\D(T)\rightarrow \infty$ as $n \rightarrow \infty$, so we have that by Theorem \ref{thm:trees} 
    \[
    \tau_t(T)\leq(1+o(1))\sqrt{t(t-1)\D(T)}\leq (1+o(1))\sqrt{t(t-1)\D(G)}.
    \]
    This completes the proof.
\end{proof}

\begin{proof}[Proof of Theorem \ref{thm:multipartite}]
    We will begin by proving our lower bound. We observe that for separate parts we are required to use disjoint color sets as two vertices in different parts are adjacent. Let $C_i$ be the set of colors used on the $i$th part $V_i$, where $|V_i|=a_i$. We observe that all pairs of vertices in $V_i$ are of distance $2$ so that no pair of colors appears twice. Each vertex has $\binom{t}{2}$ pairs of colors, so
    \[
    \binom{t}{2}a_i\leq\binom{|C_i|}2\leq \frac{|C_i|^2}2 \qquad \Longrightarrow \qquad \sqrt{t(t-1)a_i}\leq|C_i|.
    \]
    Summing over all $V_i$ yields our lower bound.\\
    
    We now demonstrate our upper bound. We will consider the tree with vertex set  $\{v\} \cup V_i$ and where $v$ is adjacent to every vertex in $V_i$. In other words we have the star $S_{a_i}$. We observe that a $t$-tone coloring of this star provides a way to $t$-tone color the part $V_i$ in the graph $K_{a_1, \ldots, a_b}$. It  follows from theorem \ref{thm:trees} that there exists an $a_0$ such that for $a_i\geq a_0$ we have 
    \[
   \tau_t(K_{a_1, \ldots, a_b}) \le  \sum_{i=1}^b\tau_t(S_{a_i})\leq (1+\delta)\sum_{i=1}^{b}\sqrt{t(t-1)a_i}.
    \]
    This completes our proof.
\end{proof}

\section{2-tone coloring graphs with small chromatic number}\label{sec:smallchi}

In this section we prove Theorems \ref{thm:smallchi1} and \ref{thm:smallchi2}.
\begin{proof}[Proof of Theorem \ref{thm:smallchi1}]
Consider a graph $G$ with $\chi(G)=k$. Consider a proper $k$-coloring of $G$ and let $V_1, \ldots, V_k$ be the color classes. We will give a 2-tone coloring of $G$ using a set of colors $C_1 \cup \ldots \cup C_k$ where the sets $C_i$ are disjoint and all the vertices in $V_i$ get colors from $C_i$. Using this plan means that we will not have to worry about vertices at distance 1 from each other. Indeed, whenever $uv \in E(G)$, $u$ and $v$ must be in different sets say $V_i$ and $V_j$ and so they would not share any colors since $C_i$ and $C_j$ are disjoint. \\

Now consider a pair of vertices $u, v$ with $d_G(u, v)=2$. We need to ensure that $u$ and $v$ do not get the same pair of colors. If $u$ and $v$ are in different sets say $V_i$ and $V_j$ then we are done as before. To address the case where $u, v$ are in the same set $V_i$ we do the following. Let $G_i$ be the graph with vertex set $V_i$ such that for any two vertices $u, v \in V_i$ we have $uv \in E(G_i)$ if $d_G(u, v) =2$. As long as we can assign to each vertex in $V_i$ a pair of colors in such a way that $u$ and $v$ get different pairs of colors whenever they are adjacent in $G_i$ (for all $i$), then we get a valid 2-tone coloring. In other words we need  $\binom{|C_i|}{2} \ge \chi(G_i)$, so it suffices to have $|C_i| = 1+\ceilbrac{\sqrt{2\chi(G_i)}}$. Altogether the number of colors used in our 2-tone coloring is 
\[
\sum_{i=1}^k |C_i| = \sum_{i=1}^k \rbrac{1+\ceilbrac{\sqrt{2\chi(G_i)}}} = k+ \sum_{i=1}^k \ceilbrac{\sqrt{2\chi(G_i)}}.
\]
This completes the proof.
\end{proof}

\begin{proof}[Proof of Theorem \ref{thm:smallchi2}]
   Let $G$ be a graph with maximum degree $\Delta(G)=d$ and chromatic number $\chi(G)=k$. We assume that for every vertex $u$ there are at most $d^4/f$ pairs of vertices $v, w$ such that $d_G(u, v) = d_G(u, w) = d_G(v, w)=2$. Let $V_1, \ldots, V_k$ be the color classes in some proper $k$-coloring of $G$, and let $G_1, \ldots, G_k$ be the graphs described in Theorem \ref{thm:smallchi1}. Then each graph $G_i$ has maximum degree at most $d(d-1) \le d^2$, and our assumptions on $G$ imply that for all $v \in V_i$, $N_{G_i}(v)$ contains at most $d^4/f$ edges of $G_i$. Thus Theorem \ref{thm:AKS} implies that $\chi(G_i) \le cd^2/\log f$ (for the value of $c$ guaranteed by the theorem). Now Theorem \ref{thm:smallchi1} gives us 
   \[
   \tau_2(G) \le k+ \sum_{i=1}^k \ceilbrac{\sqrt{2\chi(G_i)}} \le k \rbrac{1+ \sqrt{2 cd^2 / \log f}}.
   \]
   Letting $C=\sqrt{2c}$ completes the proof. 
\end{proof}

\section{Cartesian powers}\label{sec:powers}

In this section we prove Theorems \ref{thm:cube}, \ref{thm:cliqueproduct} and \ref{TOLS}, as well as Corollary \ref{cor:MOLS}.

\begin{proof}[Proof of Theorem \ref{thm:cube}]
    We aim to use Theorem \ref{thm:smallchi2} to justify this claim. We make note that $d:=\D(G^b)=\Theta(b)$. For a fixed $v\in V(G^b)$ we aim to count the number of $u,w\in V(G^b)$ such that $d(u,v)=d(u,w)=d(v,w)$. We observe a pair of vertices can vary by at most 2 coordinates so to pick our first vertex we have for some constant c, $c\D^2$ ways to choose a vertex $u$ and $w$ cannot deviate from $v$ with a separate pair of vertices from $u$, indeed we have $c'b$ ways to choose our next vertex $w$. Thus we have $\Theta(b^3)=\Theta (d^4/b)$ ways to choose such a pair for fixed $v$. \\

    Thus by Theorem \ref{thm:smallchi2} we have that 
    \[\tau_2(G^b)\leq\chi(G^b)\left(1+\frac{C'b}{\sqrt{\log (b)}}\right)=k\left(C\frac{b}{\sqrt{\log(b)}}\right).\]

   Above, we used the fact that $\chi(G^b) = \chi(G)$. This completes our proof.
\end{proof}

\begin{proof}[Proof of Theorem \ref{thm:cliqueproduct}]

 Fix integers $b, t \ge 1$ and a real number $\d>0$. Let $C$ be a set of $(1+\delta)tn$ colors. We will apply Theorem \ref{thm:DP}.  Let $\mc H$ by the bipartite hypergraph with bipartition $X \cup Y$ where $X=V(K_n^b)=[n]^b$ and where each vertex in $Y$ describes $b-i$ coordinates of a vertex in $K_n^b$ and $i$ colors for some $i$ with $1\leq i \leq t$. More formally, $Y= \bigcup_{ i \in[t]} Y_i$ where \[Y_i = \cbrac{(z_1, \ldots, z_b): z_j \in [n] \mbox{ for exactly } b-i \mbox{ values of } j, \mbox{ and } z_j = * \mbox{ for the other values } j} \times \binom{C}{i}. \]
When $z_j=*$ we treat it as an unspecified coordinate, meaning we view a vertex in $Y$  as describing $b-i$ coordinates of a vertex in $K_n^b$ and $i$ colors. We now describe the edges of $\mc{H}$. For each vertex $v \in V(K_n^b)$ and each set $S\in \binom Ct$, $\mc{H}$ will have an edge $e_{v, S}$ which contains the vertex $v \in X$ and every possible vertex in $Y$ that can be obtained by replacing $i$ coordinates of $v$ with ``$*$'' and taking a subset of $i$ colors from $S$ for some $1 \le i \le t$. We intend for the edge $e_{v, S}$ to correspond to assigning the set of colors $S$ to the vertex $v$. Thus, an $X$-perfect matching in $\mc H$ corresponds to assigning a set of $t$ colors to each vertex in $K_n^b$. In the next paragraph we explain why this assignment is a $t$-tone coloring. \\

The distance between two vertices of $K_n^b$ is exactly their Hamming distance. In other words, if $u,v\in V(K_n^b)$ and $d(u,v)=i$, then there are exactly $b-i$  coordinates where $u$ and $v$ agree. In this case, $u$ and $v$ could have at most $i-1$ colors in common in a $t$-tone coloring. Say we assign the color set $S_u$ to $u$ and $S_v$ to $v$. If our coloring violated the $t$-tone requirement there would be a set $S\subseteq S_u \cap S_v$ of size $i$. But then the edges $e_{u, S_u}$ and $e_{v, S_v}$ would share the vertex in $Y$ obtained as follows: for every coordinate where $u$ and $v$ disagree, replace it with ``$*$'', and use the color set $S$. Thus, $e_{u, S_u}$ and $e_{v, S_v}$ could not both be in a matching of $\mc H$. \\

It remains to show that $\mc H$ has an $X$-perfect matching, and for that we apply Theorem \ref{thm:DP}. 

We let
\[
\eps=\frac{1}{2t}, \qquad d=\frac{(tn)^t(1+\d)^{t-1}}{t!}.
\]

Let us check condition \ref{cond:h1}. The degree of a vertex in $X$ is just the number of possible colorings of a vertex, which is 
\[\binom{tn(1+\d)}{t}\geq\frac{(tn(1+\d)-t)^t}{t!}=\frac{(tn)^t(1+\d-\frac{1}{n})^t}{t!}.
\]
Let $s$ be the expression on the right hand side above. We claim that $s \ge (1+d^{-\alpha})d$ for sufficiently large $n$. Indeed, for fixed $\delta, t$ we have

\[
\lim_{n \rightarrow \infty} \frac{s}{(1 + d^{-\alpha})d}=\lim_{n \rightarrow \infty}\frac{\rbrac{1+\d-\frac{1}{n}}^t}{(1 + d^{-\alpha})\rbrac{1+\d}^{t-1}}= 1 + \delta >1 .
\]
Thus, every vertex in $X$ has degree at least $(1+d^{-\alpha})d$ as required. \\

 The degree of a vertex in $Y_i$ is the number of possible ways to choose a vertex and a set of $t$ colors if $i$ colors and $t-i$ coordinates of the vertex coordinates are fixed. Thus, the degree of a vertex in $Y_i$ is 
 \[
  n^{i}\binom{tn(1+\d)-i}{t-i}\leq n^{i} \frac{((tn)(1+\d))^{t-i}}{(t-i)!}=n^t(1+\d)^{t-i}\frac{t^{t-i}}{(t-i)!}\leq \frac{(tn)^t(1+\d)^{t-1}}{t!}=d,
 \]
 where the last inequality uses that $i \ge 1$. Thus every vertex in $Y$ has degree at most $d$, and so \ref{cond:h1} holds. \\

Let us check \ref{cond:h2}. We fix two vertices in $\mc{H}$ and bound their codegree. We consider cases based on whether our fixed vertices are in $X$ or $Y$. Of course, two vertices in $X$ cannot be in any edge together in a bipartite hypergraph. Next, 
 consider the codegree of a vertex in $X$ and the other in $Y$.
Our fixed vertex in $X$ gives us a vertex in $K_n^b$, and our fixed vertex in $Y$ determines at least one color. Thus the codegree is at most 
\[
\binom{(1+\d)tn-1}{t-1}=O\rbrac{n^{t-1}}=O\rbrac{d^{1-\frac{1}{t}}}\leq d^{1-\frac{1}{2t}}= d^{1-\epsilon}
\]
as required. Finally, consider the codegree of two vertices in $Y$, say one vertex $u \in Y_i$ and one in $v\in Y_j$ with $i \le j$. 
 Assume $k\leq i$ of our coordinates are specified by neither $u$ nor $v$, and that the color sets specified by $u$ and $v$ have $s \leq i$ colors in common. This means that to determine an edge of $\mc{H}$ we have to choose $k$ coordinates for the vertex of $K_n^b$ and $t-i-j+s$ colors. So $k, s \le i \le j$ and also one of the following holds: $i<j$, $k<i$, or $s<i$. Indeed, if we had $i=j=k=s$ then $u$ and $v$ would be the same. The codegree of $u$ and $v$ is then 
 \[
 n^{k}\binom{(1+\d)tn-i-j+s}{t-i-j+s} = O\rbrac{n^{t -i-j+s+k}}= O\rbrac{n^{t-1}}\le d^{1-\frac{1}{2t}}
 \]
 as required. This completes the proof of Theorem \ref{thm:cliqueproduct}.

\end{proof}

\begin{proof}[Proof of Theorem \ref{TOLS}]
Supposing there exists a collection of $n \times n$ MOLS of cardinality $t$, we will color $K_n^2$ as follows. Begin by replacing the entries in each of the $t$ MOLS using disjoint sets of colors. Each of our original Latin squares has entries in $[n]$, so we use $tn$ colors. Suppose our new Latin squares (after replacing the entries with colors) are $L_1, \ldots, L_t$ and denote by $L_i(a, b)$ the color in the $a$th row and $b$th column of $L_i$. Assign the vertex $(a,b)\in V(K_n^2)$ the set of colors $\{L_1(a, b), \ldots, L_t(a, b)\}$ comprised of the color on the $a$th row and $b$th column of each Latin square.\\

We now show this is a $t$-tone coloring. Suppose we have adjacent vertices in $ K_n^2$, say without loss of generality $(a,b)$ and $(a,c)$ where $b\not=c$. We will notice that if they share a color, then as our Latin squares have disjoint color sets, there exists a Latin square $L_i$ in which they were both assigned the same color. But then $L_i(a, b) = L_i(a, c)$ which is impossible since $L_i$ is a Latin square. \\

The only remaining case to check is distance two vertices, as there are no distance 3 vertices in $K_n^2$. Suppose we have two vertices $(a,b), (c,d)$ at distance 2, meaning $a\not=c$ and $b\not=d$. Suppose these two vertices share two colors. This would mean on two Latin squares $L_i, L_j$ where $L_i(a, b) = L_i(c, d)$ and $L_j(a, b) = L_j(c, d)$. This means the Latin squares $L_i, L_j$ were not orthogonal, which is a contradiction. This completes the proof.
\end{proof}

\begin{proof}[Proof of Corollary \ref{cor:MOLS}]
    We let $N(n)$ be the largest possible cardinality of a collection of $n \times n$ MOLS. Thus Theorem \ref{TOLS} says that if $t \le N(n)$ then $\tau_t(K_n^2) = tn$. Now we use some known bounds on $N(n)$.\\

    \ref{case:beth} follows from Beth \cite{MOLS}, who proved that $N(n) \ge n^{5/74}$. \ref{case:macneish} follows from Macneish \cite{McNT}, who proved that if $n=\prod _i^mp_i^{\alpha_i}$ is the prime factorization of $n$, then  $N(n) \ge \min_{i\in[n]}(p_i^{\alpha_i}-1)$. Note that this proves a more general bound than stated in \ref{case:macneish} which is better than \ref{case:beth} in many cases where the prime powers in the factorization of $n$ are all large. 
\end{proof}

\section{$t$-tone coloring a fixed graph $G$ for large $t$}\label{sec:larget}

\begin{proof}[Proof of Theorem \ref{thm:larget}]

    To show that $\tau_t(G)\leq tn- \sum_{u,v\in V(G)}\big(d(u,v)-1\big)$ we describe a $t$-tone coloring using that many colors. We arbitrarily order the vertices $\{v_1,v_2,...,v_n\}$. We then assign sets of colors to our vertices in order. When we color $v_i$ we will use, for each $w \in\{v_1,v_2,...,v_{i-1}\}$, $d(v_i,w)-1$ colors that are currently assigned to $w$ but no vertex besides $w$. Note that since $t \ge (D-1)(n-1)$, the number of colors currently assigned to $w$ and only $w$ is at least $t - (D-1)(i-2) \ge D-1 \ge d(v_i, w)-1$ and so we can always find enough of these colors for $v_i$. To complete a set of $t$ colors for $v_i$, we use $t-\sum_{u\in V(G)}\big(d(v_i,u)-1\big)$ new colors which have never been used yet. This gives a $t$-tone coloring with precisely $tn- \sum_{u,v\in V(G)}\big(d(u,v)-1\big)$ colors. \\

    We will now prove the lower bound $\tau_t(G)\geq tn- \sum_{u,v\in V(G)}\big(d(u,v)-1\big)$ by use of the inclusion-exclusion principle. Consider an arbitrary $t$-tone coloring $f$ of $G$ using color set $C$,  so $f(v)$ denotes the color set assigned to $v$. Now we observe that (explanation follows) 
\begin{align*}
    |C| & \ge \abrac{\bigcup_{v \in V(G)} f(v)} = \sum_{i=1}^n(-1)^{i+1}\sum_{S\in\binom{V(G)}{i}}\left|\bigcap_{u\in S}f(u)\right|\geq\sum_{i=1}^2(-1)^{i+1}\sum_{S\in\binom{V(G)}{i}}\left|\bigcap_{u\in S}f(u)\right|\\
    &=tn-\sum_{u,v\in V(G)}\left|f(u) \cap f(v) \right|\geq tn-\sum_{u,v\in V(G)}(d(u,v)-1).
\end{align*}
Indeed, the equality on the first line is by inclusion-exclusion, and the next inequality is because truncating the alternating sum given by inclusion-exclusion after a negative term gives an underestimate. The inequality on the second line follows from the fact that $\sigma$ is a $t$-tone coloring.
    
\end{proof}

\section{Evidence supporting Conjecture \ref{conj:tree}}\label{sec:treeconj}

In this section we prove some small cases of Conjecture \ref{conj:tree}.

\begin{theorem}\label{ExtTr}
We have

\[   \tau_3(S_\D) = \tau_3(T_\D)=\left\{
\begin{array}{ll}
      8 & \Delta=2 \ \\
      9 & 3\leq \Delta \leq 4 \\
      10 & 5\leq \Delta\leq 7 \\
\end{array} 
\right. \]
\end{theorem}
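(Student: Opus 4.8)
The statement asserts two things for each stated range of $\Delta$: a lower bound $\tau_3(S_\Delta)\ge v$ and an upper bound $\tau_3(T_\Delta)\le v$ (the chain $\tau_3(S_\Delta)\le\tau_3(T_\Delta)$ being automatic). So the plan is to handle these two directions separately, using the star for the lower bound and the infinite regular tree for the upper bound. For the lower bound I would refine the counting argument of Observation \ref{obs:DeltaLB}, specialized to $t=3$: the center $v$ of the star gets $3$ colors, each leaf uses $3$ colors disjoint from $v$'s, and any two leaves share at most one color. So the leaf color-sets form a ``partial Steiner system'' — a family of $3$-subsets of $C\setminus f(v)$ (of size $|C|-3$) with pairwise intersections of size $\le 1$, and we need $\Delta$ of them. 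The maximum size of such a family on $m$ points is the packing number $D(m,3,2)=\lfloor \frac m3\lfloor\frac{m-1}2\rfloor\rfloor$ with the Fisher-type corrections (the exact value is known; in particular one must be careful around $m\equiv 5\pmod 6$ where the naive bound $\binom m2/\binom 32$ is not achievable). Plugging $m=|C|-3$ and checking the threshold where $D(m,3,2)\ge\Delta$ first holds gives exactly the breakpoints $\Delta=2$, $\Delta\in[3,4]$, $\Delta\in[5,7]$ — e.g. $v=8$ forces $m=5$, $D(5,3,2)=2$; $v=9$ forces $m=6$, $D(6,3,2)=4$; $v=10$ forces $m=7$, $D(7,3,2)=7$; and $v=7$ forces $m=4$, $D(4,3,2)=1<2$. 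This pins down the lower bounds.

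\textbf{Upper bound via explicit colorings of $T_\Delta$.} For the upper bound I would exhibit, for each target value $v\in\{8,9,10\}$ and each $\Delta$ in the corresponding range, an explicit $3$-tone coloring of the infinite $\Delta$-regular tree using $v$ colors. The natural framework: root $T_\Delta$, partition into levels $P_0,P_1,\dots$ as in the proof of Theorem \ref{thm:trees}, and color level by level; a vertex at level $i$ must avoid sharing $\ge 2$ colors with its parent (distance $1$), $\ge 3$ colors with its grandparent (distance $2$) — wait, distance to grandparent is $2$ so they may share at most $1$ — and $\ge 3$ colors with any vertex at distance $3$ (its sibling-subtree neighbors and great-grandparent), and also at most $1$ color with the $\Delta-1$ ``uncles'' at distance $3$ through the parent. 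Concretely the local constraint is: given the color sets already assigned to a vertex $u$, its parent, and grandparent, one must choose $3$-sets for the (up to) $\Delta$ children of $u$ so that each child shares $\le 1$ color with $u$, $\le 0$ extra beyond... — the cleanest formulation is that the children's sets, together with the sets of the other level-$(i-1)$ vertices they're at distance $2$ from, must be pairwise ``almost disjoint.'' I would look for a \emph{periodic} coloring (only finitely many ``types'' of colored neighborhoods recur), reducing the problem to a finite check: find a finite set of color-patterns closed under the ``extend to children'' operation. For $\Delta=2$ the tree is just a path, and $\tau_3(\text{path})=8$ should follow from a short direct periodic-pattern argument. For the larger ranges I would search for a suitable Steiner-system-like assignment (e.g. using the structure of $\mathrm{PG}(2,2)$ / the Fano plane on $7$ points when $v=10$, since $D(7,3,2)=7$ is realized by the Fano plane's complement-of-lines or by the lines themselves) that can be propagated consistently down the tree.

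\textbf{Main obstacle.} The hard part is the upper bound for $T_\Delta$: unlike the lower bound, which is a clean extremal-set-theory computation, proving $\tau_3(T_\Delta)\le v$ requires actually constructing a consistent coloring of an infinite tree, and the level-by-level greedy must be shown never to get stuck — i.e. for \emph{every} admissible coloring of a vertex's neighborhood-so-far, there remain enough admissible $3$-sets to color all $\Delta$ children. This is a finite but potentially delicate case analysis, and the tightness (only $8$, $9$, or $10$ colors, with no slack) means there is little room — one essentially needs the ``right'' algebraic construction rather than a generic argument, and verifying closure of the pattern set is where the real work lies. I expect the proof to proceed by (i) establishing the lower bounds via packing numbers as above, then (ii) for each of the three cases, defining a finite family of colored-neighborhood types and checking by hand (or with a short table) that it is closed under child-extension, and that the extremal case of $\Delta$ at the top of each range can still be accommodated, thereby certifying a valid periodic $3$-tone coloring of $T_\Delta$ with the claimed number of colors.
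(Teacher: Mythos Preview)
Your plan matches the paper's almost exactly. The lower bounds are handled just as you describe (the paper does $\tau_3(S_3)\ge 9$ by an ad hoc pigeonhole and $\tau_3(S_5)\ge 10$ via Observation~\ref{obs:DeltaLB}, but your packing-number formulation $D(|C|-3,3,2)\ge\Delta$ is the same content stated more uniformly), and for the upper bounds the paper colors $T_\Delta$ level by level and for $\Delta=7$ uses precisely the Fano plane on the seven colors not used by the current vertex, as you anticipated. For $\Delta=2$ the paper simply quotes the known path formula $\tau_t(P_n)=\sum_{i\ge 0}\max\{0,t-\binom{i}{2}\}$.

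The one concrete ingredient you have not yet identified---and it is the crux---is the invariant that makes the level-by-level extension succeed without a case explosion. The paper does \emph{not} show that every admissible partial coloring extends (indeed that is false); instead it maintains the extra hypothesis that \emph{every pair of vertices at distance $2$ shares exactly one color}. Under this invariant the distance-$3$ constraint handles itself: any already-colored vertex $w$ at distance $3$ from a new child $x$ of $v$ lies in $N(u)\setminus\{v\}$ (where $u$ is the parent of $v$), so $w$ shares a color with $v$; but $x$ shares no color with $v$, hence $x$ and $w$ cannot receive the same $3$-set. The inductive step therefore reduces to: given disjoint $3$-sets on $u$ and $v$, find $\Delta-1$ further $3$-sets disjoint from $v$'s set, each meeting $u$'s set in exactly one color, and pairwise meeting in exactly one color. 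For $9$ colors and $\Delta=4$ this is $(456),(478),(957),(968)$ around $v=(123)$ (with $u$ playing the role of one of them); for $10$ colors and $\Delta=7$ it is the seven Fano lines on the seven colors outside $v$'s set. With this invariant in hand the ``finite family closed under child-extension'' collapses to essentially one type up to relabeling, and the verification is immediate.
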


\begin{proof}
     We consider the following cases.
     \begin{enumerate}

             \item $\Delta=2$
       \\  
     The following was proved by Bickle and  Phillips in \cite{TrTn} where $P_n$ is a path with $n$ vertices (the following is still valid if $n = \infty$):  \\
    \begin{equation}\label{eqn:path}
    \tau_t(P_n)=\sum_{i=0}^{n-1}\max\left\{0,t-\binom i2\right\}.
    \end{equation}
\\
    Since $S_2 = P_3$ and $T_2 = P_\infty$, we have $\tau_3(S_2) = \tau_3(T_2)= 8$ as required.\\

\item $3\le \Delta\le 4$
\\
    It suffices to prove that $\tau_3(S_3) \ge 9$ and $\tau_3(T_4) \le 9$. We first prove $\tau_3(S_3) \ge 9$.  Suppose for the sake of contradiction that we have a 3-tone coloring of $S_3$ using only the colors $1, \ldots, 8$.  We must use 3 colors, say without loss of generality $(123)$, on the head of the star that are not used on the leaves. We can assume there exist $2$ leaves that overlap by a color as otherwise we would need $12$ colors. Thus WLOG assume we have two leaves colored $(456)$ and $(478)$. But now by the pigeonhole principle there is no way to color the final leaf using three of the colors $4, \ldots, 8$ without sharing two colors with another leaf. This completes the proof of the lower bound. \\

    We now prove the upper bound $\tau_3(T_4) \le 9$.  We color $T_4$ as follows. Adopt the same partition as Theorem $\ref{thm:trees}$, namely that for a fixed vertex $v\in T_4$ if $d(u,v)=i$ then $u\in P_i$. We will color $v$ with the color set $(123)$ and $P_1$ with the color sets $(456)(478)(957)(968)$, noting that every pair of vertices in $P_1$ share a color. We continue our coloring inductively. Suppose we have colored $P_0 \cup \cdots \cup P_i$ so that any pair of vertices at distance 2 share a color (we consider this an additional induction hypothesis). 
    
    For each $v\in P_i$, we describe now how to color its three children, say $x, y, z \in P_{i+1}$. Let $u \in P_{i-1}$ be the parent of $v$. Now suppose without loss of generality that $u$ is colored $(456)$ and $v$ is colored $(123)$. We color $x, y, z$ with the sets $(478)$, $(957)$, and $(968)$. We notice that by this method every pair of distance $2$ vertices share exactly one color and no adjacent vertices share a color. Our remaining concern is that one of $x, y, z$ might have been assigned the same color set as some vertex at distance 3. Suppose a vertex $w$ is at distance 3 from $x, y$ or $z$ and $w$ has already been colored. Then $w \in N(u)\setminus \{v\}$ and so $w$ shares a color with $v$. Since $x, y, z$ do not share any colors with $v$, they cannot have the same set of colors as $w$. Thus we are able to color $P_{i+1}$ maintaining the property that all pairs of vertices at distance 2 share a color, completing the proof that $\tau_3(T_4) \le 9$.\\

    \item $5 \le \Delta \le7$ 

    It suffices to show $\tau_3(S_5) \ge 10$ and $\tau_3(T_7) \le 10$. Observation \ref{obs:DeltaLB} gives us $\tau_{3}(S_5) \geq 10$. We proceed to prove $\tau_3(T_7) \le 10$.\\

 We will use the ten colors $0, \ldots, 9$. We emulate the argument from the previous case, i.e.~we color $T_7$ by inductively coloring $P_0$, then $P_1$ and so on, maintaining the property that vertices at distance 2 always share a color. We refer to Figure \ref{fig:graph3}. The top vertex labeled $(123)$ is the root of the tree and the figure shows how to color its seven children. Now assume we have colored $P_0, \ldots, P_i$, and we now want to color the six children of some $v \in P_i$ whose parent is $u \in P_{i-1}$. Without loss of generality, say  we have colored $v$ the color set $(689)$ and $u$ with $(123)$.  Then we can color the six children of $v$ with the sets $(247), (345), (157), (205), (307), (401)$.\\

 We observe that this coloring method can be drawn from the unique $3$ uniform hypergraph on $7$ vertices such that every pair of vertices is represented in a unique edge. This hypergraph is known as Fanos plane. We assign the colors of $v$ to no vertex and make the colors assigned to $u$ represent $3$ vertices that make up an edge and arbitrarily complete the labeling of Fanos plane from the colors not yet utilized from $\{0,1,\ldots,9\}$.

    \end{enumerate}

\end{proof}

\begin{figure}
    \centering
    \begin{tikzpicture}
        \node[draw, circle] (789) at (0, 0) {123};
        \node[draw, circle] (012) at (-6, -2) {456};
        \node[draw, circle] (234) at (-4, -2) {478};
        \node[draw, circle] (450) at (-2, -2) {579};
        \node[draw, circle] (135) at (0, -2) {689};
        \node[draw, circle] (265) at (2, -2) {058};
        \node[draw, circle] (164) at (4, -2) {067};
        \node[draw, circle] (063) at (6, -2) {049};

        \node[draw, circle] (b) at (-5, -4) {247};
        \node[draw, circle] (c) at (-3, -4) {345};
        \node[draw, circle] (d) at (-1, -4) {157};
        \node[draw, circle] (e) at (1, -4) {205};
        \node[draw, circle] (f) at (3, -4) {307};
        \node[draw, circle] (g) at (5, -4) {401};

        \draw (789) -- (012);
        \draw (789) -- (234);
        \draw (789) -- (450);
        \draw (789) -- (135); 
        \draw (789) -- (265); 
        \draw (789) -- (164); 
        \draw (789) -- (063); 

        \draw (135) --  (b);
        \draw (135) --  (c);
        \draw (135) --  (d);
        \draw (135) --  (e);
        \draw (135) --  (f);
        \draw (135) --  (g);
        
    \end{tikzpicture}
    \caption{How to 3-tone color $T_7$}
    \label{fig:graph3}
\end{figure}
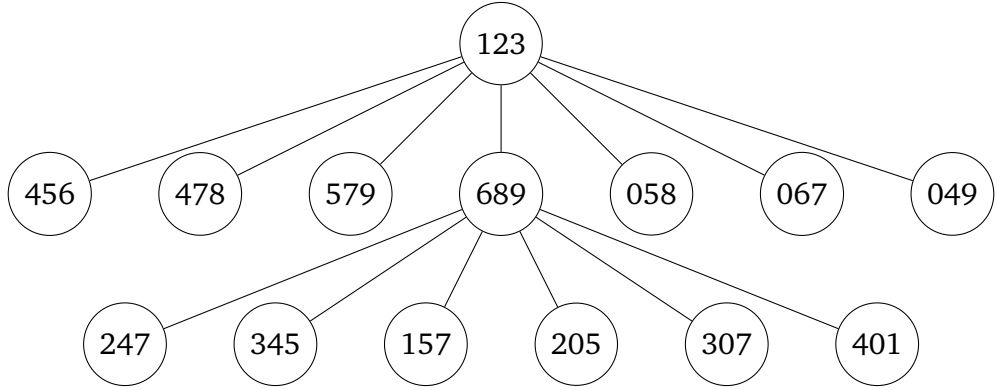

\begin{theorem}
    We have
\[   \tau_4(S_\D) = \tau_4(T_\D)=\left\{
\begin{array}{ll}
      13 & \Delta=3  \\
      14 & \Delta=4 \\
\end{array} 
\right. \]
For $\Delta=2$ we have $\tau_4(S_2)=11 < \tau_4(T_2)=12$. However for any tree $T \neq S_2$  with $\Delta(T)=2$ we have $\tau_4(T)=12$.
\end{theorem}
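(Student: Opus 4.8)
The plan is to argue in three cases according to whether $\Delta$ is $2$, $3$, or $4$, paralleling the proof of Theorem~\ref{ExtTr}. The case $\Delta=2$ reduces to a computation with the path formula, while each of the cases $\Delta=3$ and $\Delta=4$ splits into a lower bound on $\tau_4(S_\Delta)$, coming from a sharpened star argument, and an upper bound on $\tau_4(T_\Delta)$, coming from an explicit recursive coloring.

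For $\Delta=2$: a tree with maximum degree $2$ is a path, with $S_2=P_3$ and $T_2=P_\infty$, and any other such tree is some $P_n$ with $4\le n\le\infty$, hence sandwiched $P_4\subseteq P_n\subseteq P_\infty$. So everything follows by evaluating \eqref{eqn:path} at $t=4$. Since $\binom{0}{2}=\binom{1}{2}=0$, $\binom{2}{2}=1$, and $\binom{i}{2}\ge 6$ for $i\ge 4$, we get $\tau_4(P_3)=4+4+3=11$, $\tau_4(P_4)=11+1=12$, and $\tau_4(P_n)=12$ for all $n$ with $4\le n\le\infty$. Therefore $\tau_4(S_2)=11$, $\tau_4(T_2)=12$, and by monotonicity of $\tau_4$ under subgraphs $\tau_4(T)=12$ for every tree $T\ne S_2$ with $\Delta(T)=2$.

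For the lower bounds when $\Delta\in\{3,4\}$: since $S_\Delta\subseteq T_\Delta$ it suffices to bound $\tau_4(S_\Delta)$. In a $4$-tone coloring of $S_\Delta$ the center receives a $4$-set $A$, each of the $\Delta$ leaves receives a $4$-set disjoint from $A$, and the leaf sets pairwise intersect in at most one color, as the leaves are pairwise at distance $2$. By Bonferroni's inequality the union of the leaf sets has size at least $4\Delta-\binom{\Delta}{2}$, so at least $4+4\Delta-\binom{\Delta}{2}$ colors are used; this equals $13$ for $\Delta=3$ and $14$ for $\Delta=4$. For the upper bounds we must $4$-tone color $T_3$ with $13$ colors and $T_4$ with $14$ colors. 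The approach, mirroring the $\Delta\in\{4,7\}$ subcases of Theorem~\ref{ExtTr}, is to root $T_\Delta$ at a vertex $r$, set $P_i=\{v:d(v,r)=i\}$, and color $P_0,P_1,\dots$ in turn, where the color sets of the children of a vertex $v$ are read off from a fixed gadget depending only on the color sets of $v$ and of its parent $u$. The gadget should come from a suitable combinatorial design on the color set --- the $t=4$ counterpart of the Fano plane used for $T_7$; note that each non-root vertex has only $2$ children in $T_3$ and only $3$ in $T_4$, so the gadgets are small, and the counts $13,14$ hint at designs related to $\mathrm{PG}(2,3)$ or to near-optimal packings of $4$-subsets. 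One colors along layers maintaining the invariant that any two vertices at distance $2$ share exactly one color, which makes the distance-$2$ requirement automatic, while the gadget is set up so that every vertex is disjoint from its children, handling distance $1$. As in the $T_7$ argument, the already-colored vertices that matter for a newly colored child $x$ of $v$ are close to $v$: those at distance $3$ from $x$ lie in $N(u)\setminus\{v\}$, and those at distance $4$ are one further step out.

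I expect the crux --- and the only genuinely new ingredient beyond Theorem~\ref{ExtTr} --- to be the distance-$3$ requirement: when $t=4$, vertices at distance $3$ must share at most $2$ colors, a real restriction, whereas for $t=3$ the distance-$3$ requirement was merely that the two sets differ; the distance-$4$ requirement for $t=4$ is again just that no two sets at the forbidden maximal distance coincide. So the main work is to design the gadget so that (i) a child of $v$ never shares three colors with any vertex of $N(u)\setminus\{v\}$, and (ii) children of distinct siblings, colored in the same round, never receive the same $4$-set. I anticipate this will require maintaining an extra invariant --- for example that vertices at distance $3$ always share at least one color, which would let the distance-$4$ argument run exactly as for $T_7$ (a child of $v$ shares no color with $v$, but everything at distance $3$ from $v$ does) --- and verifying that this invariant propagates through the recursion. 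Once the right design is found, a small explicit gadget displayed in the style of the $T_7$ figure should complete both cases, with the coloring of $P_1$, which has $\Delta$ rather than $\Delta-1$ vertices, handled as a separate base case.
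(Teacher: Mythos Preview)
Your plan matches the paper's proof in every respect: the $\Delta=2$ case is exactly the path computation you give, the lower bounds for $\Delta\in\{3,4\}$ are the same star/Bonferroni count, and the upper bounds are obtained by the recursive layer-by-layer coloring you describe, maintaining the invariants that adjacent vertices are disjoint, that any two vertices in a common neighborhood share exactly one color with no color in three of them, and that distance-$3$ pairs always share at least one color (in fact exactly two for $T_3$, one or two for $T_4$), from which the distance-$4$ condition follows precisely by the argument you anticipate. The only divergence is that the paper's gadgets are ad hoc explicit colorings rather than instances of $\mathrm{PG}(2,3)$ or another named design, so you should expect to exhibit and verify them by hand rather than appeal to a classical packing.
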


\begin{proof}
    
We consider the following cases. 
\begin{enumerate}
    \item $\Delta=2$ 
\\
    Suppose $\Delta(T)=2$, so $T$ is a path. We use \eqref{eqn:path}. If $T\neq S_2$ then $T$ has  $n \ge 4$ vertices and we get $\tau_4(T)=12$. If $T=S_2$ then $n=3$ and we get $\tau_4(T)=11$.\\
    
    \item \label{case:Delta=3} $\Delta=3$.
    
    It suffices to show $\tau_4(S_3) \ge 13$ and $\tau_4(T_3) \le 13$. In any 4-tone coloring of $S_3$, the head vertex gets 4 colors which cannot be used on the leaves. Any pair of leaves can share at most one color, so we need at least $4+4+(4-1)+(4-2)=13$ colors. Thus $\tau_4(S_3) \ge 13$.\\

    We move on to proving $\tau_4(T_3) \le 13$. We use the color set $\{0, \ldots, 9, \alpha, \beta, \gamma\}$. We will prove by induction that we can color $T_3$ such that the following conditions are satisfied:
    \begin{enumerate}
        \item \label{cond:d1} adjacent vertices share no colors,
        \item \label{cond:d2} for any vertex $x$, every pair of vertices in $N(x)$ shares exactly one color, but no color is shared by all three vertices in $N(x)$,
        \item \label{cond:d3}every pair of vertices at distance 3 shares exactly 2 colors,
        \item \label{cond:d4}vertices at distance 4 do not get the same color set.
    \end{enumerate}
    Note that these conditions imply we have a $4$-tone coloring.\\

    We begin by assigning the root $v$ the color set $(1234)$ and the vertices of $u_1,u_2,u_3\in P_1$ the colors $(5678), (590\alpha), (69\beta\gamma)$ respectively. We will color $P_2$ as follows
    \begin{align}
        \sigma\left(N(u_1)\cap P_2\right)=&\{(190\beta),(29\alpha\gamma)\}\label{line:10}
        \\
        \sigma\left(N(u_2)\cap P_2\right)=&\{(167\beta),(268\g)\}\label{line:11}
        \\
        \sigma\left(N(u_3)\cap P_2\right)=&\{(1570),(258\alpha)\}\label{line:12}
    \end{align}

    We observe that this is a proper partial $4$-tone coloring satisfying $(a)$ through $(d)$. 
    $(a)$ is easy to verify. 
    We observe that $(b)$ only needs to be checked for $x \in \{v,u_1,u_2,u_3\}$. The case $x=v$ follows from the way we colored $P_1$, and the other cases follow from the coloring of $P_2$. In particular, every vertex in $P_2$ shares exactly one color with $v$, and one can check from the coloring given on lines \eqref{line:10}--\eqref{line:12} that each pair of vertices at distance 2 shares a color (the second color, e.g. the color 9 on line \eqref{line:10}). Thus $(b)$ holds.
   
    Now we check $(c)$. For a pair of distance $3$ vertices which have been colored so far, one of them must be some $u_i \in P_1$ and the other must be a vertex from $P_2-N(u_i)$, say $x \in N(u_j)$ where $j \neq i$. 
    It is routine to check that $(c)$ holds in all cases here. For example, $u_1$ is colored $(5678)$ and shares exactly two colors with all the vertices from lines \eqref{line:11} and \eqref{line:12}. Thus $(c)$ holds.
    The only pairs of distance $4$ vertices are in $P_2$ and none of these have the same color set so $(d)$ holds.\\
    
    We now wish to inductively color our vertices as follows, suppose all of $P_i$ and all previous layers have been colored. Our induction hypothesis is that conditions $(a)$ through $(d)$ are satisfied in our coloring.\\
    
    We will color the vertices in $P_{i+1}$ in groups where all the vertices in each group have the same grandparent in $P_{i-1}$. For a pair of vertices $u,v\in P_{i-1}$ we observe that they have no common grandchildren (i.e. $N(N(v))\cap  N(N(u)) \cap P_{i+1}=\varnothing$), so any grandchild of $u$ is distance more than 4 away from any grandchild of $v$. Thus there is no possible problem that could arise from a pair of vertices consisting of a grandchild of $u$ and a grandchild of $v$.\\

     Now consider a $v\in P_{i-1}$, and we will color its grandchildren. Let $u_1,u_2,u_3$ be the vertices in $N(v)$ where $u_3\in P_{i-2}$ and $u_1, u_2 \in P_i$. Suppose WLOG that $v$ is colored $(1234)$. We recall that by $(b)$ that $u_k$ and $u_j$ share a unique color, call this color $c_{k, j}$. We also know that $u_k$ has two colors that do not appear on any vertex at distance 2, and call these colors $c'_{k, 1}$ and $c'_{k, 2}$.
     
 We color $N(N(v))\cap P_{i+1}$ as follows:
        \begin{align}
        \sigma\left(N(u_1)\cap P_{i+1}\right)=&\{(1c'_{2, 1}c'_{3, 1}c_{2,3}),(2c'_{2, 2}c'_{3, 2}c_{2,3})\}\label{line:13}
        \\
        \sigma\left(N(u_2)\cap P_{i+1}\right)=&\{(1c'_{1, 1}c'_{3, 1}c_{1,3}), 
    (2c'_{1, 2}c'_{3, 2}c_{1,3})\}\label{line:14}
    \end{align}

    We now check conditions $(a)$--$(d)$. 
    \begin{figure}
    \centering
    \begin{tikzpicture}

        \node[draw] (689) at (0, 2) {$c_{1,3}c_{2,3}c'_{3,1}c'_{3,2}$};
        
        \node[draw] (123) at (0, 0) {1234};
        
        \node[draw] (067) at (2, -2) {$c_{1,2}c_{1,3}c'_{1,1}c'_{1,2}$};
       
        \node[draw] (478) at (-2, -2) {$c_{1,2}c_{2,3}c'_{2,1}c'_{2,2}$};

        \node[draw] (345) at (-4, -4) {$1c'_{2, 1}c'_{3, 1}c_{2,3}$};
        \node[draw] (307) at (4, -4) {$2c'_{1, 2}c'_{3, 2}c_{1,3}$};
        \node[draw] (5) at (-1.5, -4) {$2c'_{2, 2}c'_{3, 2}c_{2,3}$};
        \node[draw] (7) at (1.5, -4) {$1c'_{1, 1}c'_{3, 1}c_{1,3}$};
        
        \draw (689) --  (123);
        \draw (478) --  (123);
        \draw (067) --  (123);
        \draw (067) --  (307);
        \draw (067) --  (7);
        \draw (478) --  (5);
        \draw (478) --  (345);

    \end{tikzpicture}
    \caption{How to 4-tone color $T_3$}
    \label{fig:graph4}
\end{figure}
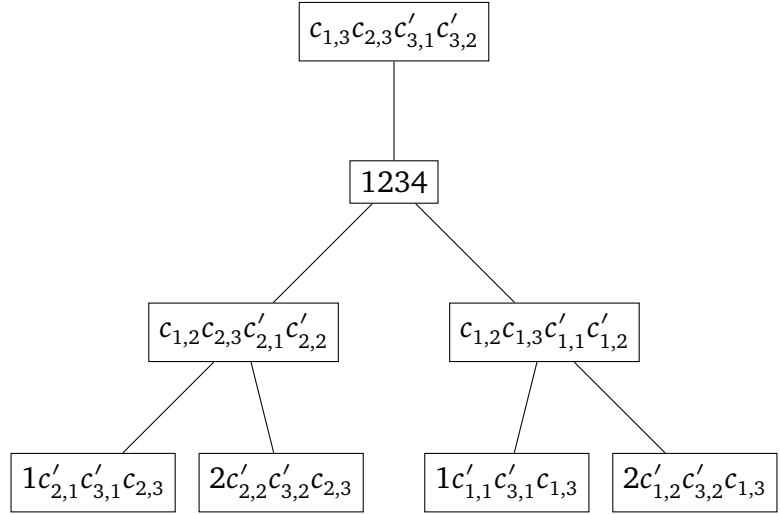
It is easy to see that $(a)$ holds since no pair of adjacent vertices share a color. We check $(b)$ next. The only vertices $x$ to check are $u_1, u_2$ since they are the only vertices that have newly colored neighbors. To check the case $x=u_1$, we note that the three vertices in $N(u_1)$ are colored $(1234),(1c'_{2, 1}c'_{3, 1}c_{2,3})$, $ (2c'_{2, 2}c'_{3, 2}c_{2,3})$ which satisfies $(b)$ by inspection (in particular there is no color in all three sets since $c_{2,3} \notin \{1, 2, 3, 4\}$). Similarly the case $x=u_2$ satisfies $(b)$. \\

    We check $(c)$. Consider a pair of distance $3$ vertices, and observe that any colored vertex at distance $3$ from a newly colored vertex is in $N(v)=\{u_1, u_2, u_3\}$. By inspection, $u_3$ shares two colors with each of the sets on lines \eqref{line:13} and \eqref{line:14}. Also, $u_1$ shares two colors with each set on line \eqref{line:14}, and $u_2$ shares two colors with the sets on line \eqref{line:13}. Thus $(c)$ holds.

    We now check $(d)$. But this easily follows from $(a)$ and $(c)$. Indeed, consider any pair of colored vertices $x, y$ at distance 4, and note that by our coloring procedure (in particular, coloring all of $P_0, \ldots, P_i$ before $P_{i+1}$) we have already colored every vertex in the path from $x$ to $y$, say $P=(x=x_1,x_2,x_3,x_4,x_5=y)$. Now by $(a)$, $x=x_1$ shares no color with $x_2$, but by $(c)$ $x_2$ shares two colors with $x_5=y$. Therefore $x$ cannot have the same color set as $y$, verifying $(d)$.\\

    \item $\Delta=4$
    \\
    By similar logic to our lower bound in the last case \eqref{case:Delta=3}, to 4-tone color the star $S_4$ we need at least $4+4+(4-1)+(4-2)+(4-3)=14$ colors.\\ 
    
We will now color the infinite tree $T_4$ using the colors $\{0, \ldots, 9, \alpha, \beta, \gamma, \delta\}$. Our coloring will have the following properties:
    \begin{enumerate}
        \item \label{cond:d1'} adjacent vertices share no colors,
        \item \label{cond:d2'} for any vertex $x$, every pair of vertices in $N(x)$ shares exactly one color, but no color is shared by three vertices in $N(x)$,
        \item \label{cond:d3'}every pair of vertices at distance 3 shares either one or two colors,
        \item \label{cond:d4'}vertices at distance 4 do not get the same color set.
    \end{enumerate}
We will assign the root vertex the color set $(1234)$ and the vertices $u_1, u_2, u_3, u_4$ of $P_1$ the color sets $(5678),(590\alpha)$, $(69\beta\gamma),(70\beta\delta)$ respectively. We color $P_2$ as follows:
\begin{align}
        \sigma\left(N(u_1)\cap P_2\right)=&\{(29\beta\d),(3\beta0\alpha),(409\g)\}\label{line:15}
        \\
       \sigma\left(N(u_2)\cap P_2\right)=&\{(3\beta78),(476\gamma),(16\beta\d)\}\label{line:16}
        \\
        \sigma\left(N(u_3)\cap P_2\right)=&\{(475\alpha),(150\d),(2078)\}\label{line:17}
        \\
        \sigma\left(N(u_4)\cap P_2\right)=&\{(159\g),(2968),(365\alpha)\}\label{line:18}
    \end{align}
We argue that the coloring given so far satisfies the conditions $(a)$--$(d)$. We can easily verify adjacent vertices share no colors, so $(a)$ holds. We check $(b)$ next. We observe that for each pair $u_i, u_j$ that they share exactly one color. Additionally each vertex in $P_2$ is distance $2$ from $v$ and only the first color (in the sets given on lines \eqref{line:15}--\eqref{line:18}) is shared with $v$. Also, any pair of color sets on line \eqref{line:15} share exactly one color, and the same can be said of lines \eqref{line:16}--\eqref{line:18}. Thus $(b)$ holds. To check $(c)$, there are several cases. For example, the reader can verify that the color set $(5678)$ for $u_1$ shares either one or two colors with each set on lines \eqref{line:16}--\eqref{line:18}. The rest of the cases for $(c)$ involve checking that the color sets given to $u_2, u_3, u_4$ share either one or two colors with the appropriate lines in \eqref{line:15}--\eqref{line:18}.\\

    Suppose we have colored $P_0, \ldots, P_i$ for some $i \ge 2$. Now we wish to color $N(N(v))\cap P_{i+1}$ for each $v \in P_{i-1}$. Let $N(v)=\{u_1, u_2, u_3, u_4\}$ where $u_4\in P_{i-2}$ and $u_1, u_2, u_3 \in P_i$. Denote by $c_{s, j}$ the unique color shared by $u_s$ and $u_j$. Denote by $c'_k$ the color $u_k$ shares with no other vertex in $N(v)$. Thus, each vertex $u_s$ has the color set 
    \[
    \big( c_{s, j}c_{s, k}c_{s, \ell} c'_s\big)
    \]
    where $\{j, k, \ell \}=[4]\setminus \{s\}$.  We will suppose WLOG that $v$ is colored (1234). We take subscripts modulo 4 so $c'_{s+k}$ and $c_{s+k,s+r}$ are understood.
    For $k=1, 2, 3$ we assign $N(u_k)\cap P_{i+1}$  the following color sets:
\begin{equation}\label{line:19}
    \big((k+1)c_{k+1,k+2}c_{k+2,k+3}c'_{k+3}\big), \;\; 
    \big((k+2)c_{k+3,k+2}c_{k+1,k+3}c'_{k+1}\big), \;\; 
    \big((k+3)c_{k+3,k+1}c_{k+1,k+2}c'_{k+2}\big),
\end{equation}
    where the colors $(k+1), (k+2), (k+3)$ are also taken modulo 4, so for example if $k=3$ then $(k+3)$ denotes the color 2. \\

    We check the properties $(a)$--$(d)$. $(a)$ is easy to check. For $(b)$, observe that each pair of sets on line \eqref{line:19} shares a unique color, and each set share a unique color (its first color) with $v$. \\

     Now we check (c). For a newly colored vertex say  $x \in N(u_k) \cap P_{i+1}$, the vertices of distance $3$  will be the $u_j$ for $j \neq k$. 
     
     Of the last $3$ colors listed in our coloring for $x$ either one or two will overlap with $u_j$ only for $j>1$  and our first listed color will only ever overlap with $u_j$ for $j=1$. Thus $(c)$ is satisfied.\\

    As before (in the $\Delta=3$ case), $(d)$ follows from $(a)$ and $(c)$. This completes our inductive proof that we can color $T_4$. Thus $\tau_4(T_4)= 14$.
    
\end{enumerate}

\end{proof}

We might begin to suspect that in general we can determine the $t$-tone chromatic number of an arbitrary tree by it's maximum degree alone, however this is not the case which we demonstrate below.

\begin{prop}
    $\tau_5(S_3) < \tau_5(T_3)$
\end{prop}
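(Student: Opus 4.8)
The plan is to establish the two inequalities $\tau_5(S_3)\le 16$ and $\tau_5(T_3)\ge 17$, which together give the claimed strict inequality. For the star, a natural guess is that the lower bound from Observation \ref{obs:DeltaLB} is tight: with $t=5$ and $\Delta=3$ the observation gives $\tau_5(S_3)\ge\lceil(11+\sqrt{1+240})/2\rceil=\lceil(11+\sqrt{241})/2\rceil=\lceil 13.26\rceil=14$, so that alone does not suffice. A sharper lower bound for the star specifically comes from the ``greedy'' count used in the $\Delta=3,4$ cases of the previous proofs: the head uses $5$ colors forbidden on the leaves, the first leaf uses $5$ new colors, and each subsequent leaf can reuse at most one color per previously-colored leaf, so the three leaves need at least $5+4+3$ colors beyond none reused is not forced; the correct bookkeeping gives $\tau_5(S_3)\ge 5+5+4+3=17$? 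That would contradict the claim, so the subtlety is that a leaf at distance $2$ from another leaf may share up to $d-1=1$ color, and one must check how many colors can be \emph{simultaneously} reused across all three leaves. I would carefully set up the optimization: choosing leaf color sets $A,B,D\subseteq C\setminus(\text{head})$ with $|A\cap B|,|A\cap D|,|B\cap D|\le 1$, minimizing $|A\cup B\cup D|$; by inclusion-exclusion this is $15-|A\cap B|-|A\cap D|-|B\cap D|+|A\cap B\cap D|\ge 15-3+0=12$, so $\tau_5(S_3)\ge 5+12-(\text{triple overlap savings})$, and I would pin down the exact minimum (I expect $\tau_5(S_3)=16$, achieved by a configuration where the three pairwise intersections are distinct single colors and the triple intersection is empty, forced to cost one extra).

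For the upper bound on the star, I would exhibit an explicit $16$-coloring: head $=(1,2,3,4,5)$, and three leaves drawn from the remaining $11$ colors with pairwise intersections being three distinct colors and empty triple intersection, e.g. $(6,7,8,9,a)$, $(6,b,c,d,e)$, $(7,8,\ldots)$—here I would just check the arithmetic so that the union has size $11$. Wait: three $5$-sets with pairwise intersections of size $1$ and triple intersection $0$ have union $15-3=12$, needing $12$ colors on the leaves plus $5$ on the head $=17$. To get $16$ one needs the triple intersection to be nonempty, but then two leaves share $\ge 2$ colors, which is forbidden at distance $2$. So in fact $\tau_5(S_3)=17$??—this is precisely the point I need to resolve, and it suggests the \emph{real} content is that $\tau_5(T_3)\ge 18$ while $\tau_5(S_3)=17$. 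I would therefore recompute carefully: the minimum of $|A\cup B\cup D|$ over $5$-sets with all pairwise intersections $\le 1$ is $12$ (attained, e.g., by a partial Steiner system / three edges through distinct points of a suitable design), giving $\tau_5(S_3)\le 17$, and combined with the greedy lower bound $\tau_5(S_3)\ge 17$, we get $\tau_5(S_3)=17$.

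For the infinite tree, I would show $\tau_5(T_3)\ge 18$. The key is to look at a ball of radius $2$ around a vertex $v$: $v$ has $3$ neighbors $u_1,u_2,u_3$, each $u_i$ has $2$ further children, so $P_2$ has $6$ vertices. Vertices in $P_1$ pairwise have distance $2$ (share $\le 1$ color) and distance $1$ to $v$ (share $0$ colors with $v$); vertices in $P_2$ have distance $2$ to their ``cousins'' under a different $u_j$ (share $\le 1$), distance $2$ to other children of the same $u_i$... no, distance $2$ to the \emph{sibling} under the same $u_i$ means they share $\le 1$; and distance $4$ to cousins under a different grandparent branch. The constraint I would exploit: within $N(v)\cup\{v\}$ we have a clique-like structure in the ``pair-sharing'' sense, and extending to $P_2$ forces many colors. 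Concretely, I would count: $v$ uses $5$ colors; the $u_i$ use $5$ each, forbidden on $v$, pairwise sharing $\le 1$, so they occupy $\ge 12$ colors disjoint from $v$'s; now each child $x$ of $u_i$ is forbidden $u_i$'s $5$ colors, shares $\le 1$ with $v$, shares $\le 1$ with $u_j$ ($j\neq i$), and shares $\le 1$ with its sibling. I expect a careful pigeonhole/counting argument (possibly an LP-type bound, or a clever direct argument mirroring the Fano-plane structure in Theorem \ref{ExtTr}) to push the total to $18$, i.e. strictly more than $17$.

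\textbf{Main obstacle.} The hard part will be the lower bound $\tau_5(T_3)\ge 18$: unlike the star, where a single ball suffices, for the tree one must argue that \emph{no} global $17$-coloring of the infinite tree exists, which typically requires showing that any valid coloring of a bounded ball (radius $2$ or $3$) already forces $18$ colors, or else that local configurations cannot be ``continued.'' I would first attempt the radius-$2$ ball: enumerate, up to symmetry, the possible color sets for $v, u_1, u_2, u_3$ and the six vertices of $P_2$ under all the distance constraints, and show every such configuration uses $\ge 18$ colors; if radius $2$ is not enough, I would go to radius $3$. An alternative cleaner route, which I would try in parallel, is a counting bound: fix $v$ and count incidences between colors and the $1+3+6=10$ vertices in the ball, using that $v$ contributes $5$ ``private'' colors, the $u_i$ contribute $\ge 12$ more, and the overlap budget among the $P_2$ vertices is too small to avoid $18$ total. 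The upper bound $\tau_5(S_3)\le 17$ is routine (exhibit one explicit assignment and check), and the matching lower bound $\tau_5(S_3)\ge 17$ follows from the standard greedy count already used for smaller $t$ in the paper.
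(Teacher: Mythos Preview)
Your overall strategy---determine $\tau_5(S_3)$ exactly and then find a finite subtree of $T_3$ that provably needs more colors---matches the paper's, and you do eventually land on $\tau_5(S_3)=17$. But the execution differs in two places, and one of your steps contains an error that would block the argument as written.

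For the star, you rederive $\tau_5(S_3)=17$ by hand via inclusion--exclusion. This is fine but unnecessary: the paper simply invokes Theorem~\ref{thm:larget}. The star $S_3$ has $n=4$ vertices and diameter $D=2$, so $t=5\ge (n-1)(D-1)=3$ puts us in the equality case, and $\tau_5(S_3)=5\cdot 4-\sum_{u,v}(d(u,v)-1)=20-3=17$ falls out in one line.

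For the tree lower bound, the paper does \emph{not} analyze the full radius-$2$ ball. It uses a smaller six-vertex witness: take $S_3$ and attach two new leaves to one of its original leaves. The argument then proceeds in two stages. First, the $17$-coloring of $S_3$ is unique up to relabeling (exactly the computation you carried out: the three leaf sets must pairwise share one distinct color and have empty triple intersection). Second, a short pigeonhole case analysis on the two added vertices---each is adjacent to one leaf (share $0$), at distance $2$ from the head (share $\le 1$), at distance $3$ from the two remaining leaves (share $\le 2$ each), and at distance $2$ from each other (share $\le 1$)---shows that no assignment from the $17$ colors can satisfy all constraints. This is much more tractable than a ten-vertex ball.

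Finally, a concrete error in your sketch: you write that a grandchild $x\in N(u_i)\cap P_2$ ``shares $\le 1$ with $u_j$'' for $j\ne i$. But $d(x,u_j)=3$, so the correct constraint is $|f(x)\cap f(u_j)|\le 2$. This weaker restriction is exactly why a crude counting bound on the ball does not obviously go through; the paper's smaller witness works because the rigid $S_3$-coloring leaves very little freedom for just two further vertices.
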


\begin{proof}
It suffices to show there exists a finite tree, $T$, such that $\D(T)=3$ satisfying $\tau_5(S_3)<\tau_5(T)$.
By Theorem \ref{thm:larget} we have $\tau_5(S_3)  =17$. Now consider the following tree $T$, formed by taking $S_3$ and for one leaf $v$ we add 2 vertices adjacent to it. We will prove $\tau_5(T) > 17$. Assume the head of $S_3 \subseteq T$ is colored $D=(\gamma\delta\eps\phi\theta)$ and the leaves are colored $A=(12345),B=(16789),C=(26\alpha\beta\gamma)$. It is easy to see that the coloring of $S_3$ on $17$ colors is unique up to permuting colors, so we can assume we maintain the same coloring and let the vertex we join $2$ vertices to have the coloring $A$. 
Each of the two vertices in $T-S_3$ can have at most one of color from $D$ and $2$ vertices from $B$ and $C$. If such a vertex does not take a color from $D$ we have that by pigeonhole we must take $3$ colors from either $B$ or $C$ which as they are of distance $3$ does not permit a 5-tone coloring. Thus, each vertex in $T-S_3$ must take a color from $D$. Now suppose the two vertices in $T-S_3$ are both assigned the same color from $D$, WLOG call this color $\gamma$. Then these two vertices cannot share an additional color since they have distance 2, but we must fill in the remaining 4 colors on each vertex and we only have $7$ colors to choose from so this cannot occur. Finally, consider the case where the two vertices of $T-S_3$ have distinct colors from $D$. If either of these vertices are assigned the color $6$ they cannot have $2$ other colors from $B$ and they cannot have two other colors from $C$ but as there would be $3$ colors to fill in by pigeonhole we must use $2$ from one of these sets. Thus we know in this instance we do not use the color $6$, but that means we have $4$ colors still needed to be assigned for each vertex which forces these two vertices share at least two colors, a contradiction. Thus $\tau_5(T)>17$.
    \end{proof}

In fact this can be generalized to the following.

\begin{prop}
    For every positive integer $k$ there exists a natural number integer $t$ and a tree $T$ satisfying $\Delta(T)=k$ and $\tau_t(S_k)<\tau_t(T)$
\end{prop}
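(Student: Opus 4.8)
The plan is to generalize the construction used for the case $k=3$, $t=5$ to arbitrary $k$. First I would exhibit, for each $k$, a value of $t$ (which will need to grow with $k$ — something like $t = \binom{k}{2}+k$, mimicking the role played by $t=5 = \binom{3}{2}+2$... actually more carefully $\tau_t(S_k) = tk - \binom{k}{2}$ by Theorem~\ref{thm:larget}, and we want the star coloring to be ``tight'' so that extending it is impossible). The key point exploited in the $k=3$ proof is that when $t$ is chosen so that the unique (up to permutation) optimal $t$-tone coloring of $S_k$ forces each pair of leaves to share exactly one color and forces the head's color set to be ``maximally used up,'' then a leaf $v$ has very little freedom, and attaching a small gadget to $v$ (two or more new vertices forming the start of a deeper tree) creates an unsatisfiable constraint.

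Concretely, I would proceed as follows. Step 1: pick $t$ so that $\tau_t(S_k) = N := tk - \binom{k}{2}$ and argue that an optimal coloring of $S_k$ on $N$ colors is essentially rigid: the head gets a block $D$ of $t$ colors disjoint from all leaves, each leaf $\ell_i$ shares exactly one color $c_{ij}$ with each other leaf $\ell_j$, and apart from these $\binom{k-1}{1}$ shared colors each leaf uses $t-(k-1)$ private colors, with the total palette outside $D$ having size exactly $N - t$. Step 2: form $T$ from $S_k$ by attaching to one leaf $v$ a set of new vertices (the analogue of the two pendant vertices in the $k=3$ proof — the right number is probably $k-1$ children of $v$, so that $v$ plays the role of the head of a new star $S_k$). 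Each new vertex $w$ is at distance $1$ from $v$, distance $2$ from the other new vertices, distance $2$ from the other original leaves, and distance $3$ from the head. Step 3: do the counting. A new vertex $w$ may take at most one color from $D$ (distance $3$ from head), at most $t-1$ colors from $v$'s set is forbidden entirely (adjacent, so $0$ colors from $v$), and at most two colors total from the union of the other leaves' sets, since sharing $\ge 3$ colors with a leaf at distance... wait, the other original leaves are at distance $2$ from $w$, so $w$ shares at most one color with each of them; more importantly each private/shared color of a non-$v$ leaf lies in only a bounded number of those leaves. Balancing: $w$ needs $t$ colors, can draw $\le 1$ from $D$, $0$ from $f(v)$, and the rest from the remaining colors; a pigeonhole argument on the shared-structure of the leaves shows this forces two new vertices to overlap in $\ge 2$ colors, contradicting their distance being $2$.

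The main obstacle I expect is making Step 3 rigorous in a way that is uniform in $k$: in the $k=3$ case the pigeonhole is transparent because there are only two non-$v$ leaves and eight colors outside $v$'s set, but for general $k$ one must argue carefully that the ``private'' colors of the non-$v$ leaves are too few and too entangled (each shared color $c_{ij}$ belongs to two leaves) for a new vertex to assemble $t-1$ colors from them without colliding with its siblings. I would handle this by first pinning down the exact structure of the optimal $S_k$-coloring (the colors outside $D$ form, essentially, the edge set of $K_{k}$ on the leaves plus private colors, so there are exactly $\binom{k}{2}$ ``shared'' colors and $k(t-k+1)$ private ones — wait, need $t \ge k-1$), then computing how many of these a new vertex can legally use, and finally a counting inequality showing $k-1$ new vertices cannot be colored. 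Step 4, assembling everything: conclude $\tau_t(T) > N = \tau_t(S_k)$, and since $\Delta(T) = \max\{k, ?\}$ — note we must ensure the attachment does not raise the max degree above $k$, which it does not since $v$ had degree $1$ in $S_k$ and now has degree $k$, while the head still has degree $k$ — we get $\Delta(T) = k$ and the proposition follows. An alternative, possibly cleaner route: instead of a bespoke gadget, take $T$ to be a large enough finite subtree of $T_k$ and invoke the (conjectural-in-general, but provable in small cases) gap $\tau_t(S_k) < \tau_t(T_k)$ for a suitable $t$; but since that gap is exactly what Conjecture~\ref{conj:tree} says fails for large $\Delta$, one needs $t$ large relative to $k$, which is consistent with the statement (``there exists $t$''), so this route should also work and might avoid the delicate pigeonhole.
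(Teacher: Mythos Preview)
Your route is workable in principle but far heavier than needed. The paper attaches a \emph{single} pendant vertex $w$ to one leaf $\ell$ of $S_k$, obtaining a tree $T$ with $\Delta(T)=k$, and then simply counts: in any $t$-tone coloring of $T$, the vertex $w$ shares no colors with $\ell$ (distance $1$), at most one color with the head (distance $2$), and at most two colors with each of the remaining $k-1$ leaves (distance $3$). Thus at most $1+2(k-1)=2k-1$ of $w$'s $t$ colors can lie in the palette used on $S_k$, giving $\tau_t(T)\ge\tau_t(S_k)+(t-2k+1)$. Choosing any $t\ge 2k$ (and $t\ge k$ so that Theorem~\ref{thm:larget} pins down $\tau_t(S_k)=(k+1)t-\binom{k}{2}$) finishes the proof. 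No structural analysis of the optimal $S_k$-coloring is used---only the distances from $w$ to the old vertices.

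Your plan, by contrast, rests on (i) proving rigidity of the optimal $S_k$-coloring and (ii) a pigeonhole among $k-1$ new siblings. Step~(i) is true (it is the equality case of the truncated inclusion--exclusion in Theorem~\ref{thm:larget}) but is extra work, and step~(ii) is precisely the ``main obstacle'' you yourself flag and do not resolve. Both steps evaporate once you observe that, for $t$ large enough, a \emph{single} new vertex already cannot be colored from the old palette; you do not need several new vertices to collide with one another. One small slip: in your construction the new vertices are at distance $2$ from the head (path $w$--$v$--head), not $3$; your conclusion ``at most one color from $D$'' is nonetheless correct for that reason.
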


\begin{proof}
    
    Assuming $t\ge k$ we have by Theorem \ref{thm:larget} that
    \[\tau_t(S_k)=(k+1)t-\binom{k}{2}.
    \]
    Let $T$ be the tree built by taking $S_k$ and connecting one leaf to a new vertex, $v$. We now color $v$ by noting that it is distance $3$ from each leaf meaning it can share 2 vertices from each of these vertices and it can share one vertex with what was the head of the star. Thus we have that if 
    \[
    t-2(k-1)-1=t-2k-3\ge1
    \]
    Then we would need to use at least one new color meaning that if $t\ge 2k+4$ we have that $\tau_t(T)>\tau_t(S_k)$ 
    
\end{proof}

This demonstrates that the issue we ran into prior was not unique to $S_3$. We do notice that the reason this became an issue in the examples shown is that we haven't used enough of our pairings before introducing another vertex.

\section{Concluding remarks} \label{sec:conclusion}

In this note we gave several new results for $t$-tone coloring, but there are plenty of questions remaining. Of course it would be nice to resolve Conjecture \ref{conj:tree}. We also make the following conjecture which would generalize Corollary \ref{cor:MOLS} \ref{case:beth}.
\begin{conjecture}
    For all positive integers $t$ and $b$ there exists an $N=N(t, b)$ such that for every $n\geq N$ we have that $\tau_t(K_n^b)=tn$.
\end{conjecture}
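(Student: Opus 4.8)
The lower bound is the easy half already recorded after Theorem~\ref{thm:cliqueproduct}: since $K_n$ embeds in $K_n^b$ (fix all but one coordinate) we have $\tau_t(K_n^b)\ge\tau_t(K_n)=tn$ for every $n\ge 1$, with no largeness hypothesis needed. So the entire content is the upper bound $\tau_t(K_n^b)\le tn$; note that Theorem~\ref{thm:cliqueproduct} only gives $(1+\d)tn$, because the matching produced by Theorem~\ref{thm:DP} carries the $d^{-\alpha}$ slack, and closing that gap requires replacing the probabilistic matching by an exact algebraic construction. The plan is to realize an extremal colouring as a ``coordinate code.''

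First I would set up the reduction. Identify $V(K_n^b)$ with $[n]^b$, so that distance is Hamming distance, and suppose we are given functions $f_1,\dots,f_t\colon[n]^b\to[n]$. Colour the vertex $v=(v_1,\dots,v_b)$ with the set $\cbrac{(1,f_1(v)),\dots,(t,f_t(v))}\subseteq[t]\times[n]$: this uses exactly $tn$ colours and is automatically a $t$-element set. If $u\neq v$ are at distance $d$ they agree in exactly $b-d$ of the first $b$ coordinates, so $u,v$ share precisely $\abrac{\{i:f_i(u)=f_i(v)\}}$ colours; hence the colouring is $t$-tone exactly when the map $\Phi\colon v\mapsto(v_1,\dots,v_b,f_1(v),\dots,f_t(v))$ sends distinct vertices to tuples in $[n]^{b+t}$ agreeing in at most $b-1$ coordinates. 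Equivalently, $\Phi([n]^b)$ is a code of length $b+t$ and minimum distance $\ge t+1$ over the alphabet $[n]$; as it has $n^b$ codewords and the projection onto the first $b$ coordinates is a bijection onto $[n]^b$, this is precisely an $[b+t,b,t+1]$ MDS code over $[n]$ whose first $b$ coordinates form an information set (equivalently, an index-$1$ orthogonal array of strength $b$ with $b+t$ columns and $n$ levels). So the conjecture reduces to: such a code exists for all sufficiently large $n$. (This framework is also essentially forced: for the $d=1$ constraint each $f_i$ must be a proper colouring of $K_n^b$, hence uses $\ge\chi(K_n^b)=n$ colours, so a disjoint-layers colouring on $tn$ colours must use exactly $n$ colours per layer.)

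Next I would dispatch the arithmetically nice $n$. If $n$ is a prime power, identify $[n]$ with $\mathbb{F}_n$ and, assuming $n\ge b+t-1$ (using the projective line so that the length $b+t\le n+1$ is permitted), take the Reed--Solomon code of dimension $b$ with $b+t$ evaluation points; the first $b$ points are an information set by the MDS property, so for each $v$ there is a unique polynomial $q_v$ of degree $<b$ hitting $v$ on those points, and one sets $f_j(v):=q_v$ evaluated at the $(b+j)$-th point. This already yields the clean statement $\tau_t(K_n^b)=tn$ for every prime power $n$ with $t\le n+1-b$, generalizing Corollary~\ref{cor:MOLS}\ref{case:macneish}. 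More generally, if $n=q_1\cdots q_r$ with the $q_i$ pairwise coprime prime powers each at least $b+t-1$, the coordinatewise direct product of the corresponding MDS codes is, via $\prod_i\mathbb{F}_{q_i}\cong\mathbb{Z}_n$, an $[b+t,b,t+1]$ MDS code over $[n]$ with the first $b$ coordinates still an information set, since both the minimum-distance and the information-set properties pass to the product. This settles every $n$ whose prime-power factorization has no part smaller than $b+t-1$.

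What remains — and what I expect to be the genuine obstacle — is $n$ divisible by a small prime power, for instance $n=2p$ with $p$ a large prime: here Reed--Solomon does not apply, and in fact no \emph{linear} $[b+t,b,t+1]$ MDS code over $\mathbb{Z}_n$ exists once $t\ge 2$ and $n$ has a prime factor below $b+t-1$ (reducing modulo that prime obstructs the required MDS minors). One must therefore build a \emph{non-linear} such code, i.e.\ an index-$1$ strength-$b$ orthogonal array with $b+t$ columns over the alphabet $[n]$. For $b=2$ this object is exactly a family of $t$ mutually orthogonal Latin squares of order $n$, which exists once $n$ is large by Beth's theorem ($N(n)\gg n^{5/74}$, see Corollary~\ref{cor:MOLS}\ref{case:beth}); combined with the steps above this proves the conjecture for $b=2$. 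For $b\ge 3$ the corresponding fact — that index-$1$ strength-$b$ orthogonal arrays of any fixed number of columns exist over all sufficiently large alphabets — appears to require a higher-strength analogue of Beth's theorem, presumably via PBD-closure and Wilson-type recursive constructions in design theory; establishing that is the crux, while the rest of the argument is the routine reduction and the finite-field constructions sketched above.
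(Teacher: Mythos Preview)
This statement is a \emph{conjecture} in the paper, not a theorem: the authors offer no proof, only the evidence of Theorem~\ref{thm:cliqueproduct} (the $(1+\d)tn$ upper bound) and Corollary~\ref{cor:MOLS} (the exact result for $b=2$). So there is no ``paper's own proof'' to compare against.

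Your proposal is not a proof either, and you are candid about this: after the reduction to MDS codes (equivalently, index-$1$ orthogonal arrays of strength $b$ with $b+t$ columns over $[n]$), you settle the prime-power case via Reed--Solomon, the large-prime-power-factor case via direct products, and the $b=2$ case via Beth's bound on MOLS---the last of which is exactly the paper's Corollary~\ref{cor:MOLS}\ref{case:beth}. The remaining case, $b\ge 3$ with $n$ having a small prime-power factor, you correctly identify as requiring a strength-$\ge 3$ analogue of the asymptotic existence of MOLS. That is a well-known open problem in design theory: the asymptotic existence of index-$1$ orthogonal arrays of strength $3$ or more over arbitrary alphabets is not established, and your appeal to ``PBD-closure and Wilson-type recursive constructions'' is a hope rather than an argument. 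So the gap is exactly where you say it is, and it is a real one.

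Two smaller remarks. First, your reduction is one-directional: a layered colouring on $tn$ colours gives an MDS code, but a general $t$-tone colouring on $tn$ colours need not be layered, so the parenthetical ``this framework is also essentially forced'' overstates the case (it is forced only once you have already committed to the layered ansatz). This does not affect soundness, since you only need the sufficiency direction. Second, the information-set condition you impose is automatic: in any code of size $n^b$ and minimum distance $t+1$ in $[n]^{b+t}$, every set of $b$ coordinates is an information set, since two codewords agreeing on $b$ coordinates would have distance at most $t$. So the reduction is simply to the existence of an $(b+t,n^b,t+1)_n$ MDS code, with no extra hypothesis.
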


Two of our proofs (for Theorems \ref{thm:trees} and \ref{thm:cliqueproduct}) both encode the $t$-tone coloring problem as a hypergraph matching problem. It would be interesting to find more applications of hypergraph matching results to $t$-tone coloring (and similar coloring problems). 

\bibliographystyle{abbrv}

\end{document}